\numberwithin{equation}{section}
\theoremstyle{definition}
\newtheorem{definition}[equation]{Definition}
\newtheorem{remark}[equation]{Remark}
\newtheorem{example}[equation]{Example}
\newtheorem{que}[equation]{Question}
\newtheorem*{definition*}{Definition}
\newtheorem*{remark*}{Remark}
\newtheorem*{example*}{Example}
\newtheorem*{que*}{Question}
\newtheorem*{conj*}{Conjecture}
\newtheorem*{constr*}{Construction}
\theoremstyle{plain}
\newtheorem{lem}[equation]{Lemma}
\newtheorem{thm}[equation]{Theorem}
\newtheorem{prop}[equation]{Proposition}
\newtheorem{cor}[equation]{Corollary}
\newtheorem{thma}{Theorem}
\newtheorem{lema}[thma]{Lemma}
\newtheorem{cora}[thma]{Corollary}
\newcommand{\pref}[2]{\hyperref[#2]{#1 \ref*{#2}}}
\newcommand{\bbF}{{\mathbb{F}}}
\newcommand{\bbR}{{\mathbb{R}}}
\newcommand{\calR}{{\mathcal{R}}}
\newcommand{\calS}{{\mathcal{S}}}
\let\ORGvarepsilon=\varepsilon
\let\varepsilon=\epsilon
\let\epsilon=\ORGvarepsilon
\newcommand{\scal}{{\mathbf{scal}}}
\newcommand{\diff}{{\mathrm{Diff}}}
\newcommand{\ccc}{\widetilde{\pi}_0}
\newcommand{\Spin}{\mathrm{Spin}}
\newcommand{\SO}{\mathrm{SO}}
\newcommand{\ort}{\mathrm{O}}
\newcommand{\im}{\mathrm{im\ }}
\newcommand{\tr}{\mathrm{\textbf{tr }}}
\newcommand{\rk}[1]{\mathrm{{rank}(#1)}}
\newcommand{\congarrow}{\overset{\cong}\longrightarrow}
\newcommand{\embeds}{\hookrightarrow}
\newcommand{\too}{\longrightarrow}
\newcommand{\res}{\mathrm{res}}
\newcommand{\op}{\mathrm{op}}
\newcommand{\rst}{\mathrm{rst}}
\newcommand{\st}{\mathrm{st}}
\newcommand{\id}{\mathrm{id}}
\newcommand{\tor}{\mathrm{tor}}
\newcommand{\conc}{\mathrm{conc}}
\newcommand{\cyl}{\mathrm{cyl}}
\newcommand{\cl}{\mathrm{cl}}
\newcommand{\ie}{\mbox{i.\,e.\,}{}}
\title[$H$-space structures on $\calR^+(M)$]{$H$-Space structures on spaces of metrics of positive scalar curvature}
\author{Georg Frenck}
\thanks{G.F. was supported by the SFB 878 ``Groups, Geometry and Actions'', by the Deutsche Forschungsgemeinschaft (DFG, German Research Foundation) under Germany 's Excellence Strategy – EXC 2044 – 390685587, Mathematics Münster: Dynamics – Geometry - Structure and by the Deutsche Forschungsgemeinschaft (DFG, German Research Foundation) – 281869850 (RTG 2229).}
\address{KIT, Karlsruher Institut für Technologie\\
Englerstra\ss e 2\\
76131 Karlsruhe\\
Bundesrepublik Deutschland}
\email{math@frenck.net}
\email{georg.frenck@kit.edu}
\urladdr{Frenck.net/Math}
\date{\today}
\keywords{}
\begin{document}

\begin{abstract}
	We construct and study an $H$-space multiplication on $\calR^+(M)$ for manifolds $M$ which are nullcobordant in their own tangential $2$-type. This is applied to give a rigidity criterion for the action of the diffeomorphism group on $\calR^+(M)$ via pullback. We also compare this to other known multiplicative structures on $\calR^+(M)$.
\end{abstract}

\maketitle

\tableofcontents

\section{Introduction}

\noindent Let $\calR^+(M)$ denote the space of metrics of positive scalar curvature (hereafter: psc-metrics) on a given compact manifold $M$, equipped with the Whitney $C^\infty$-topology. In this paper we will examine multiplicative structures on $\calR^+(M)$. In order to state our results with the least amount of technicalities we confine ourselves to the case of $\Spin$-manifolds. A $\Spin$-manifold $M$ is called $\Spin\times B\pi_1(M)$-nullcobordant if for a classifying map $f\colon M\to B\pi_1(M)$ of the universal cover, the element $[f\colon M\to B\pi_1(M)]$ vanishes in the cobordism group $\Omega^\Spin_d(B\pi_1(M))$. The following is our main theorem (see \pref{Theorem}{thm:main} for the general version).

\begin{thma}\label{thm:mainspin}
	Let $M$ be a $\Spin$-manifold of dimension at least $6$, which is $\Spin\times B\pi_1(M)$-nullcobordant. Then $\calR^+(M)$ is a homotopy-associative, homotopy-com-mutative $H$-space.
\end{thma}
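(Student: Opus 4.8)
The plan is to build the multiplication geometrically from a nullcobordism, using the surgery-theoretic machinery for spaces of psc-metrics.

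\begin{proof}[Proof sketch]
The strategy is to produce the $H$-space structure by a ``connect-sum-along-a-cobordism'' construction. Since $M$ is $\Spin\times B\pi_1(M)$-nullcobordant, there is a compact $\Spin$-manifold $W^{d+1}$ with $\partial W = M$ together with a map $W\to B\pi_1(M)$ restricting to the classifying map of the universal cover on $M$; equivalently, $M$ together with its tangential $2$-type structure bounds. Fix such a $W$ once and for all. The idea is that given two psc-metrics $g_0,g_1\in\calR^+(M)$, one forms a third metric $\mu(g_0,g_1)$ by gluing $(M,g_0)$ and $(M,g_1)$ along a psc-metric on (a surgered version of) $W$, where on $W$ one uses a fixed boundary condition and the Gromov--Lawson / Walsh surgery technique to extend psc-metrics across the trace of surgeries. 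Concretely, after performing surgeries on the interior of $W$ (which preserves the bordism class and, by the Gromov--Lawson--Walsh surgery theorem and its parametrized refinements, induces a homotopy equivalence on psc-metric spaces once $W$ is made highly connected relative to $M$), one obtains that $\calR^+(M)$ receives an action of a monoid-like object of psc-metrics on $W$, and the chosen nullcobordism class gives a distinguished point of that object which serves as the unit up to homotopy.

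The key input I would invoke is the main result of Chernysh and Walsh (the surgery theorem for families) together with Botvinnik's work and the ``cobordism invariance'' results of Ebert--Frenck / Ebert--Randal-Williams: these say that for a cobordism $W$ from $M_0$ to $M_1$ which admits a $2$-connected map to $M_0$ and to $M_1$ (after appropriate surgery), the induced map $\calR^+(M_0)\to\calR^+(M_1)$ obtained by gluing on a psc-metric on $W$ is a homotopy equivalence, and that this construction is, up to coherent homotopy, independent of the choices. Applying this with $M_0 = M \sqcup M$, $M_1 = M$, and $W$ a suitably surgered version of ($M\times[0,1]$) $\natural$ (the double of the nullcobordism) — or more precisely using the nullcobordism to cap off one boundary component — produces a map $\mu\colon\calR^+(M)\times\calR^+(M)\to\calR^+(M)$. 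Homotopy-associativity and homotopy-commutativity then follow from the corresponding symmetries of the gluing construction: reassociating the triple cobordism and swapping the two factors both yield cobordisms related by a diffeomorphism rel boundary (again after surgery), and the cobordism-invariance package ensures the induced maps on $\calR^+(M)$ are homotopic. The unit axiom up to homotopy comes from the fact that gluing the cylinder $M\times[0,1]$ (with a product-like psc-metric near the ends) induces a map homotopic to the identity, which is the content of the parametrized surgery theorem applied to the trivial cobordism.

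The main obstacle I expect is \emph{coherence and basepoint issues}: one must check that after performing surgeries on $W$ to make the relevant maps $2$-connected, the resulting object still carries enough structure (a well-defined, associative-up-to-coherent-homotopy composition) so that the maps $\mu$, the associator homotopy, and the commutator homotopy can actually be constructed compatibly — this is where the hypothesis ``dimension at least $6$'' enters, via the Whitney trick / surgery below the middle dimension needed to realize the bordism $W$ by one that is highly connected rel $\partial$, and via Walsh's improvement of the Gromov--Lawson technique to handle surgeries in codimension $\geq 3$. A secondary technical point is verifying that the tangential $2$-type ($\Spin$ structure plus $\pi_1$) is preserved under all the surgeries and gluings, so that every manifold appearing in the construction genuinely lies in the bordism category over $B\pi_1(M)$; this is exactly why the precise hypothesis is vanishing in $\Omega^{\Spin}_d(B\pi_1(M))$ rather than merely $\Spin$-nullcobordism. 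Once these points are settled, associativity and commutativity reduce to exhibiting explicit diffeomorphisms between surgered cobordisms, which is routine.
\end{proof}
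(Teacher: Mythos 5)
Your proposal takes essentially the same route as the paper: construct the multiplication by applying a surgery map $\calS$ to a cobordism built from the nullcobordism $W$, and verify the $H$-space axioms by manipulating cobordism classes. However, there is a genuine gap in the unit axiom and some imprecision in the choice of cobordism. The paper takes $X_W \coloneqq W^\op \amalg W^\op \amalg W \colon M \amalg M \leadsto M$, sets $\mu_W = \calS(X_W)$ and $e_W = \calS_W(g_\emptyset)$. The cobordism you first suggest, $(M\times[0,1])\ \natural\ dW$, is a self-cobordism of $M$ with one incoming and one outgoing boundary component, so it cannot yield a binary operation; your hedge about capping off a boundary component with the nullcobordism is closer, but it is worth pinning down that the correct object is $W^\op \amalg W^\op \amalg W$.

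More importantly, your justification of the unit axiom by saying it ``comes from the fact that gluing the cylinder $M\times[0,1]$\ldots induces a map homotopic to the identity'' does not actually reach the claim. The composed cobordism $((M\times I)\amalg W)\cup X_W$ is diffeomorphic to $W^\op \amalg dW \amalg W$, which carries a superfluous closed component $dW$ and is nowhere near the cylinder on the nose; it is only $\theta$-cobordant rel boundary to $M\times I$. The ingredient that makes this reduction work is precisely \pref{Proposition}{prop:double-is-nullbordant}: $W^\op\amalg W$ is $\theta$-cobordant rel $M\amalg M$ to $M\times I$, and hence the double $dW=W\cup W^\op$ is $\theta$-nullcobordant. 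This lemma is the engine of the whole graphical calculus and is absent from your sketch; without it the unit axiom is not established. For associativity and commutativity your observation that the relevant composed cobordisms are related by diffeomorphisms interchanging the disjoint copies of $W^\op$ is correct and suffices, but the unit genuinely needs the cobordism relation via the doubling lemma rather than diffeomorphism invariance alone.
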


\begin{remark*}
	Note that any $\Spin\times B\pi_1(M)$-nullcobordant manifold of dimension at least $5$ admits a psc-metric as a consequence of the famous Gromov--Lawson--Schoen--Yau surgery theorem (see \cite{gromovlawson_classification} and \cite{schoenyau}).
\end{remark*}

\noindent Our main result applies in particular to high-dimensional spheres, generalizing a result of Walsh \cite{walsh_hspaces}, and products of \emph{arbitrary} $\Spin$-manifolds with $S^n$ for $n\ge2$. The key feature of this $H$-space structure is that the multiplication map is given \enquote{geometrically}. To explain what this means, let us recall the main result from \cite{actionofmcg} (see also \cite{ownthesis}): Let $(M_0,f_0),(M_1,f_1)$ be $(d-1)$-dimensional $\Spin$-manifolds with maps $f_i\colon M_i\to B\pi$ where $\pi\coloneqq\pi_1(M_1)$. We define $\Omega_d^{\Spin,\pi}(M_0,M_1)$ to be the set of equivalence classes of pairs $(W,F)$ of $d$-dimensional $\Spin$-manifolds $W$ together with maps $F\colon W\to B\pi$ such that $\partial W=M_0\amalg M_1$ and $F$ extends $f_0$ and $f_1$. The relation is given as follows: $(W,F)\sim (W',F')$ if there exists a $(d+1)$-dimensional relative $\Spin\times B\pi$-cobordism connecting $(W,F)$ and $(W',F')$, i.e. $\Omega_d^{\Spin,\pi}(M_0,M_1)$ is the set of (relative) cobordism classes of cobordisms from $M_0$ to $M_1$. For spaces $X,Y$ let $[X,Y]$ denote the set of homotopy classes of maps $X\to Y$. In \cite{actionofmcg}\footnote{See also \cite[Section 3.1]{ownthesis}.} we constructed a map
\[\Omega_d^{\Spin,\pi}(M_0,M_1) \too[\calR^+(M_0),\calR^+(M_1)],\]
provided that $d\ge7$ and $f_1$ is a classifying map for the universal cover of $M_1$. We will omit the maps $f,F$. Now let $M$ be a $\Spin$-manifold with fundamental group $\pi$ and let us assume that $M$ is $\Spin\times B\pi$-nullcobordant via $W\colon \emptyset \leadsto M$. This gives a homotopy class of a map $\calS_W\coloneqq\calS(W)\colon\calR^+(\emptyset)\to\calR^+(M)$ and since $\calR^+(\emptyset) = \{g_\emptyset\}$ is a point, we get a base point component of $\calR^+(M)$. Furthermore let $X_W\coloneqq W^\op\amalg W^\op\amalg W\colon M\amalg M\leadsto M$, where $W^\op$ denotes the flipped cobordism. Then the homotopy class of the map
\[\mu_W\coloneqq\calS(X_W)\colon\calR^+(M)\times\calR^+(M)\to\calR^+(M)\]
gives the $H$-space structure in \pref{Theorem}{thm:mainspin} with the neutral element given by $e_W\coloneqq\calS_W(g_\emptyset)$. Since $\mu_W$ only depends on the class of $X_W$ in $\Omega_d^{\Spin,\pi}(M\amalg M, M)$, it is possible to prove \pref{Theorem}{thm:mainspin} by doing computations in this cobordism set. This leads to a form of computation which we call \emph{graphical calculus}. Since the definition $\mu_W$ required the choice of a null-cobordism $W$, it is natural to ask wether $\mu_W$ is independent of this choice. This is answered by the following lemma.

\begin{lema}[{\pref{Lemma}{lem:equivalence-of-hspaces}}]
	Let $M$ and $N$ be $\Spin$-manifolds of dimension at least $6$ with the same fundamental group $\pi$. Let $W\colon \emptyset \leadsto M$ and $W'\colon \emptyset\leadsto N$ be respective $\Spin\times B\pi$-nullcobordisms. Then the map \[\calS(W^\op\amalg W')\colon (\calR^+(M),\mu_W)\to(\calR^+(N),\mu_{W'})\] is an equivalence of $H$-spaces. If $W' = W\amalg B$ for $B$ a closed $\Spin$-manifold with non-vanishing $\alpha$-invariant, then $\calS(W^\op\amalg W')$ does not fix any path component and in particular is not homotopic to the identity.
\end{lema}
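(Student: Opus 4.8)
The plan is to push everything through the formal properties of the assignment $V\mapsto\calS(V)$ together with a short dictionary of identities in $\Omega^{\Spin,\pi}_d(-,-)$, and to bring in index theory only at the very last step. First I would record the properties of $\calS$ established in \cite{actionofmcg} (resp.\ the earlier sections): it depends only on the class of a cobordism in $\Omega^{\Spin,\pi}_d$, it is compatible with composition along \emph{nonempty} gluing manifolds, it sends disjoint unions to products under $\calR^+(M_0\amalg M_1)\cong\calR^+(M_0)\times\calR^+(M_1)$, and it sends cylinders to identities. Two observations get used over and over: (i) if $N'$ is a closed, $\Spin\times B\pi$-nullcobordant $d$-manifold then $[V\amalg N']=[V]$ for every cobordism $V$ (a relative cobordism is $(V\times I)\amalg Z$, with $Z$ a nullcobordism of $N'$); in particular the double $DW=\partial(W\times I)$, and likewise $DW'$, may always be deleted from a disjoint union; and (ii) the bending identity $[W^\op\amalg W]=[M\times I]$ in $\Omega^{\Spin,\pi}_d(M,M)$, a relative cobordism being $W\times I$ (subdivide the collar $\partial W\times I\subset\partial(W\times I)=DW$ to exhibit the cylinder interpolating the cobordism $W^\op\amalg W$ and $M\times I$), and similarly for $W'$.

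For the first statement I would put $\phi=\calS(W^\op\amalg W')$ and $\psi=\calS(W'^\op\amalg W)$ and check these are mutually inverse. Gluing the composite cobordism of $\psi\circ\phi$ along $N$ turns the $W'$- and $W'^\op$-pieces into the nullcobordant double $DW'$, which is deleted by (i); what is left is $W^\op\amalg W\colon M\leadsto M$, which by (ii) is the cylinder, so $\psi\circ\phi\simeq\id_{\calR^+(M)}$, and symmetrically $\phi\circ\psi\simeq\id_{\calR^+(N)}$. That $\phi$ is an $H$-map is the same bookkeeping: after gluing and deleting the nullcobordant double(s) $DW$, resp.\ two copies of $DW'$, both $\phi\circ\mu_W$ and $\mu_{W'}\circ(\phi\times\phi)$ equal $\calS(W^\op\amalg W^\op\amalg W')\colon\calR^+(M)\times\calR^+(M)\to\calR^+(N)$; moreover $\phi(e_W)=\calS\big((W^\op\amalg W')\circ W\big)(g_\emptyset)=\calS(W')(g_\emptyset)=e_{W'}$ after deleting $DW$. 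Hence $\phi$ is an equivalence of $H$-spaces.

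For the second statement let $W'=W\amalg B$ with $B$ closed $\Spin$ and $\alpha(B)\neq0$; then $N=\partial W'=M$, so $\phi$ is a self-equivalence of $\calR^+(M)$. By $W^\op\amalg W'=W^\op\amalg W\amalg B$ and (ii), $\phi=\calS\big((M\times I)\amalg B\big)$. Running the same reductions on $\mu_W(-,e_{W'})=\calS(X_W)\circ\calS\big((M\times I)\amalg W'\big)=\calS\big(X_W\circ((M\times I)\amalg W')\big)$ again yields $\calS(W^\op\amalg W\amalg B)=\calS\big((M\times I)\amalg B\big)$, so $\phi$ is homotopic to right-translation $g\mapsto\mu_W(g,e_{W'})$ in $(\calR^+(M),\mu_W)$. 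Since $\mu_W$ is realised by the cobordism $X_W$, and the ($\pi_0$-level) index difference is additive over disjoint unions and unchanged by applying $\calS$, the based map $[g]\mapsto\inddiff_{e_W}(g)$ is a homomorphism out of $(\pi_0\calR^+(M),\mu_W)$, so $\inddiff_{e_W}(\phi(g))=\inddiff_{e_W}(g)+\inddiff_{e_W}(e_{W'})$ for all $g$. A relative index computation then identifies $\inddiff_{e_W}(e_{W'})=\inddiff_{e_W}\big(\calS((M\times I)\amalg B)(e_W)\big)$ with $\pm\alpha(B)\neq0$; hence $\inddiff_{e_W}(\phi(g))\neq\inddiff_{e_W}(g)$ for every $g$, so $\phi$ carries every path component to a different one, and in particular $\phi\not\simeq\id$.

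The only non-formal ingredient is the final one — that $\inddiff_{e_W}(e_{W\amalg B})=\pm\alpha(B)$, equivalently that $e_W$ and $e_{W\amalg B}$ lie in distinct path components of $\calR^+(M)$ — and this relative-index input is the step I expect to be the main obstacle; everything else is the graphical calculus of composing, taking disjoint unions, and discarding nullcobordant pieces.
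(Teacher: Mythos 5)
Your proof of the first assertion matches the paper's. The paper gives the inverse $\calS(W'^{\op}\amalg W)$ and verifies the $H$-map and unit conditions by the same cobordism identities (introducing and deleting nullcobordant doubles, using $W^{\op}\amalg W\sim M\times I$), carried out as graphical calculus; your written-out version is the same computation.

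For the second assertion, you correctly reduce $\varphi$ to $\calS\bigl((M\times I)\amalg B\bigr)$ via $W^{\op}\amalg W'\sim (M\times I)\amalg B$, which is exactly the first half of the paper's argument (an application of \pref{Proposition}{prop:double-is-nullbordant}). But then you take a detour that the paper does not take and that introduces an unproved step. You observe that $\varphi$ is right translation by $e_{W'}$ and then assert that $[g]\mapsto\inddiff_{e_W}(g)$ is a monoid homomorphism on $(\pi_0\calR^+(M),\mu_W)$, justified only by the phrase \enquote{additive over disjoint unions and unchanged by applying $\calS$.} The second half of that phrase is not something you have established, and it is not obviously correct as stated: $\calS$ of a cobordism does not in general preserve $\inddiff$ on the nose, it shifts it by a term coming from the cobordism, and the fact that $X_W$ contributes no such shift is exactly the kind of statement one would need to argue. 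The paper avoids this entirely by directly citing \cite[Proposition 3.35]{actionofmcg}, which says (in effect) that $\calS(V\amalg B)$ does not preserve any path component when $\alpha(B)\neq0$, applied with $V=M\times I$; nothing about $\mu_W$ or a homomorphism property is needed. So your route to the conclusion has a genuine gap (the homomorphism claim) beyond the index-theoretic input that you explicitly flagged, and that gap is avoidable: once you have $\varphi=\calS\bigl((M\times I)\amalg B\bigr)$, you should apply the disjoint-union/index statement to $\varphi$ directly rather than re-deriving it via the $H$-space structure.
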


\noindent We also show that the components of invertible elements are independent of the nullcobordism $W$ (see \pref{Proposition}{prop:units}). If furthermore $N$ is a (not necessarily nullcobordant) $\Spin$-manifold with the same fundamental group $\pi$, then we define a map
\[\rho_W\coloneqq \calS(N\times[0,1]\amalg W^\op )\colon \calR^+(M) \times\calR^+(N)\too\calR^+(N)\] 
which gives an action of $\calR^+(M)$ on $\calR^+(N)$ in the homotopy category (see \pref{Proposition}{prop:action}). Using graphical calculus we obtain a triviality criterion for the action of the oriented diffeomorphism group $\diff(N)$ on $\calR^+(N)$ in the case $\pi=1$. Note that for an orientation preserving diffeomorphism $f\colon N\to N$ of a simply connected $\Spin$-manifold $N$ there exist $2$ $\Spin$-structures on the mapping torus $T_f\coloneqq N\times[0,1]/ (f(x),1)\sim (x,0)$.

\begin{thma}[{\pref{Theorem}{thm:diffeo-action}}]\label{thm:diffeo-action-spin}
	Let $N,M$ be simply connected $\Spin$-manifolds of dimension at least $6$, let $W\colon\emptyset\leadsto M$ be a $\Spin$-cobordism and let $f\colon N\to N$ be an orientation preserving diffeomorphism. Then $f^*\colon\calR^+(N)\to\calR^+(N)$ is homotopic to the identity if there exists a $\Spin$-structure on $T_f$ such that $e_W$ is isotopic to $\calS(M\times[0,1]\amalg T_f)(e_W)$. If $N$ is $\Spin$-nullcobordant equivalence holds.
\end{thma}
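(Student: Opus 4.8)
We outline the plan. The idea is to transfer the hypothesis, which is a statement in $\calR^+(M)$, to a statement about a self-map of $\calR^+(N)$ by means of the homotopy action $\rho_W$, and then to identify that self-map with $f^*$. For the first half, write $W_f\colon N\leadsto N$ for the mapping cylinder of $f$, made into a $\Spin$-cobordism by restricting a chosen $\Spin$-structure on $T_f$. Since $W_f$ carries no handles, $\calS$ applied to it is homotopic to the map transporting a psc-metric through a cylinder whose outgoing end is reparametrised by $f$; thus $\calS(W_f)\simeq f^*$, independently of the $\Spin$-structure. The crucial point is then the \emph{absorption identity}
\[\calS\bigl((N\times[0,1])\amalg T_f\bigr)\ \simeq\ f^*\colon\calR^+(N)\too\calR^+(N).\]
I would prove it by comparing $(N\times[0,1])\amalg T_f$ with $W_f$ in $\Omega_d^{\Spin,\pi}(N,N)$, where $\pi=1$: gluing these two cobordisms along their common boundary $N\amalg N$ and collapsing the resulting cylinder produces $T_f\amalg\overline{T_f}=\partial(T_f\times[0,1])$, which is $\Spin$-nullcobordant, so the two cobordisms represent the same class and induce homotopic maps by \cite{actionofmcg}; combining this with $\calS(W_f)\simeq f^*$ gives the identity. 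This is the main obstacle: it must be executed with care for corners and for the induced $\Spin$-structure on $W_f$, and this is where the clause \enquote{there exists a $\Spin$-structure on $T_f$} enters.

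For the bridge, recall that $e_W=\calS(W)(g_\emptyset)$, that $\rho_W=\calS\bigl((N\times[0,1])\amalg W^\op\bigr)$, and that $\rho_W(e_W,-)\simeq\id_{\calR^+(N)}$ by \pref{Proposition}{prop:action}. Fix a continuous representative of $\rho_W$. Using functoriality of $\calS$ together with $(M\times[0,1])\circ W\cong W$, $W^\op\circ W=\partial(W\times[0,1])$ (hence $\Spin$-nullcobordant), and $(N\times[0,1])\circ(N\times[0,1])\cong N\times[0,1]$, one computes, after deleting the $\Spin$-nullcobordant closed summand $W^\op\circ W$,
\[\rho_W\bigl(\calS(M\times[0,1]\amalg T_f)(e_W),\,-\bigr)\ \simeq\ \calS\bigl((N\times[0,1])\amalg T_f\bigr)\colon\calR^+(N)\too\calR^+(N).\]
The hypothesis says $e_W$ and $\calS(M\times[0,1]\amalg T_f)(e_W)$ lie in the same path component of $\calR^+(M)$; substituting a path between them into the chosen representative of $\rho_W$ yields a homotopy from $\rho_W(e_W,-)$ to $\rho_W\bigl(\calS(M\times[0,1]\amalg T_f)(e_W),-\bigr)$. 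Chaining these homotopies gives $\calS\bigl((N\times[0,1])\amalg T_f\bigr)\simeq\id_{\calR^+(N)}$, and hence $f^*\simeq\id$ by the absorption identity.

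For the converse when $N$ is $\Spin$-nullcobordant, choose a $\Spin$-nullcobordism $V\colon\emptyset\leadsto N$ and set $\Psi\coloneqq\calS(W^\op\amalg V)\colon\calR^+(M)\to\calR^+(N)$, an $H$-space equivalence by \pref{Lemma}{lem:equivalence-of-hspaces}. A computation like the one above gives $\rho_W(a,e_V)\simeq\Psi(a)$, where $e_V=\calS(V)(g_\emptyset)$, so $a\mapsto\Psi^{-1}\bigl(\rho_W(a,e_V)\bigr)$ is homotopic to $\id_{\calR^+(M)}$; in particular $\rho_W(a,-)\simeq\rho_W(b,-)$ forces $a$ and $b$ into the same path component of $\calR^+(M)$. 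If now $f^*\simeq\id$, then $\calS\bigl((N\times[0,1])\amalg T_f\bigr)\simeq\id$ by the absorption identity, so the displayed formula of the previous paragraph yields $\rho_W\bigl(\calS(M\times[0,1]\amalg T_f)(e_W),-\bigr)\simeq\rho_W(e_W,-)$, and therefore $\calS(M\times[0,1]\amalg T_f)(e_W)$ is isotopic to $e_W$; since the absorption identity is independent of the $\Spin$-structure, this holds for every $\Spin$-structure on $T_f$, so the hypothesis is met.
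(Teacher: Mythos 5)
Your argument is, in substance, the paper's own proof: the forward direction chains $f^*=\calS(N\times[0,1]\amalg T_f)=\rho_W(e_W,\calS(N\times[0,1]\amalg T_f))=\rho_W(\calS(M\times[0,1]\amalg T_f)(e_W),\id)$ and then invokes the isotopy hypothesis, and what the paper packages as the picture in \pref{Figure}{fig:diffeo-action} is exactly your cancellation of the closed, $\Spin$-nullcobordant summand $dW$ after gluing. Two remarks. First, you spend a paragraph re-deriving the identity $f^*\simeq\calS(N\times[0,1]\amalg T_f)$; this is a quoted result (\cite[Theorem A / Corollary 3.32]{actionofmcg}, cited in the text immediately before the theorem) and should simply be invoked. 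As sketched your derivation is not quite sound: that the closed manifold $T_f\amalg\overline{T_f}$ bounds is a statement in closed $\Spin$-cobordism, whereas equality in $\Omega_d^{\Spin,1}(N,N)$ is the \emph{relative} cobordism relation of \pref{Definition}{def:structured-bordism-set}, which requires a filling with corners along the collars $N\times I$. Passing from the closed nullbordism to a relative one is precisely the corner bookkeeping you flag, and it is not automatic — but since the identity is citable this is not a real gap in your overall proof. Second, your converse is a mild, clean variant of the paper's: you compute $\rho_W(\cdot,e_V)\simeq\Psi$ and extract path components by evaluating at $e_V$, whereas the paper instead decomposes $\rho_W=\mu_V(\Psi,\id)$ (\pref{Figure}{fig:action-nullbordant}) and uses that $\mu_V(b,\cdot)\simeq\id$ iff $b\sim e_V$; both hinge on $\Psi=\calS(W^\op\amalg V)$ being an $H$-space equivalence and are equally valid.
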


\begin{remark*}
	Considering the special case that $M=N$ we get that $f^*$ is homotopic to the identity if and only if $f^*e_W\sim e_W$. This extends \cite[Proposition D]{actionofmcg}.
\end{remark*}

\noindent In the final \pref{Section}{sec:comparison} we compare $\mu_W$ to other multiplicative structures on $\calR^+(M)$.  We show that Walsh's multiplication from \cite{walsh_hspaces} agrees with $\mu_D$ for the disk $D\colon \emptyset\leadsto S^{d-1}$ provided that $d\ge7$. We then examine the multiplicative structure on concordance classes introduced by Stolz in \cite{stolz_concordance} and further studied in \cite{weinbergeryu} and \cite{xyz}. We show that this is induced by a map of spaces and if the manifold is $\Spin\times B\pi$-nullcobordant it is induced by $\mu_W$. Finally we examine the $H$-multiplication $\mu_\cyl$ given by concatenation of metrics on cylinders. It is shown in \cite{erw_psc3} that for a certain class of manifolds this yields an infinite loop space structure on the subspace of so-called stable metrics. In the special case of the cylinder over a sphere we show that gluing in the torpedo metric on both sides yields an equivalence of $H$-spaces 
\[(\calR^+(S^{d-2}\times[0,1])_{g_\circ,g_\circ},\mu_\cyl) \too (\calR^+(S^{d-1}),\mu_D).\]
As a corollary we get the following.
\begin{cora}\label{cor:compare}
	The underlying $H$-space structures of the $(d-1)$-fold loop space structure from \cite{walsh_hspaces} and the infinite loop space structure from \cite{erw_psc3} on $\calR^+(S^{d-1})^\st$ agree for $d\ge 7$.
\end{cora}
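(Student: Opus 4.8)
The plan is to derive the corollary by combining the two comparison results proved in \pref{Section}{sec:comparison}, after checking that both are compatible with passing to the subspace $\calR^+(S^{d-1})^\st$ of stable metrics. Write $D\colon\emptyset\leadsto S^{d-1}$ for the disc, viewed as a $\Spin$-nullcobordism, so that by \pref{Theorem}{thm:mainspin} (applicable since $d\ge 7$) we have the $H$-multiplication $\mu_D$ on $\calR^+(S^{d-1})$. I will show that both underlying $H$-space structures named in the corollary are homotopic, as $H$-space structures on $\calR^+(S^{d-1})^\st$, to the restriction of $\mu_D$; the corollary is then immediate.

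For the first structure: by the identification of Walsh's multiplication with $\mu_D$ for $d\ge 7$ established earlier in the section, the underlying $H$-space structure of the $(d-1)$-fold loop space structure of \cite{walsh_hspaces} on $\calR^+(S^{d-1})$ is homotopic to $\mu_D$, and hence, restricted to $\calR^+(S^{d-1})^\st$, is homotopic to the restriction of $\mu_D$. That $\calR^+(S^{d-1})^\st$ is preserved by $\mu_D$ up to homotopy will follow from the next paragraph.

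For the second structure: by \cite{erw_psc3}, the infinite loop space structure on the space of stable metrics on the cylinder $S^{d-2}\times[0,1]$ relative to the round metrics $g_\circ$ has $\mu_\cyl$ as its underlying $H$-space structure, since the $E_\infty$-structure there refines concatenation of cylinders. Now invoke the torpedo-gluing equivalence of $H$-spaces
\[\Phi\colon(\calR^+(S^{d-2}\times[0,1])_{g_\circ,g_\circ},\mu_\cyl)\too(\calR^+(S^{d-1}),\mu_D)\]
constructed above. The key point is that $\Phi$ sends the subspace of stable cylinder metrics onto $\calR^+(S^{d-1})^\st$: since $\Phi$ is itself an instance of the $\calS(-)$-construction --- gluing the two torpedo caps --- it intertwines the stabilization maps defining stability on the two sides, and it is already known to be a homotopy equivalence of the ambient spaces. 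Transporting the infinite loop space structure of \cite{erw_psc3} along this restricted equivalence endows $\calR^+(S^{d-1})^\st$ with an infinite loop space structure whose underlying $H$-space is $\Phi_*(\mu_\cyl)$; and since $\Phi$ is an $H$-map onto $(\calR^+(S^{d-1}),\mu_D)$, this is precisely the restriction of $\mu_D$. In particular $\mu_D$ preserves $\calR^+(S^{d-1})^\st$, which closes the gap left in the previous paragraph.

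Putting the two computations together, both underlying $H$-space structures on $\calR^+(S^{d-1})^\st$ are homotopic to the restriction of $\mu_D$, hence to each other, which is the assertion of the corollary. I expect the only genuine work to lie in the third paragraph: matching the notion of \enquote{stable metric} used in \cite{walsh_hspaces} and \cite{erw_psc3} with the cobordism-categorical one implicit in the $\calS(-)$-construction, and verifying that $\Phi$ respects it, so that the restricted map is again an equivalence. Everything else is a formal consequence of the comparison theorems already established in this section, and the hypothesis $d\ge 7$ is exactly what is needed for $\mu_D$ to be defined and for the identification of $\mu_D$ with Walsh's multiplication.
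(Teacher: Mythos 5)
Your argument is correct and follows the same route as the paper: the corollary is obtained by combining \pref{Corollary}{cor:walsh} (Walsh's $\mu^\tor$ agrees with $\mu_D$) with \pref{Theorem}{thm:compare} (the torpedo-gluing map is an $H$-space equivalence carrying $\mu_\cyl$ to $\mu_D$), so that both underlying $H$-structures on $\calR^+(S^{d-1})^\st$ coincide with $\mu_D$. The paper leaves this as a one-line deduction; you add a useful sanity check that $\cl_{g_\tor}$ identifies the stable subspaces on both sides, which is implicit in the paper's use of the notation $\calR^+(S^{d-1})^\st$ and in Lemma \ref{lem:units}/Proposition \ref{prop:units}, but not spelled out.
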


\noindent\textbf{Acknowledgements:} This paper grew out of a part of the author's Münster PhD-thesis and it is my great pleasure to thank my advisor Johannes Ebert for his guidance and many enlightening discussions. I would also like to thank the anonymous referee for his careful reading and very helpful remarks and suggestions which particularly improved the final section. 

\section{Tangential structures and the surgery map}\label{sec:surgerymap}
\noindent For $d\ge0$ let $B\ort(d+1)$ be the classifying space of the $(d+1)$-dimensional orthogonal group and let $U_{d+1}$ be the universal vector bundle over $B\ort(d+1)$. Let $\theta\colon B\to B\ort(d+1)$ be a fibration. We call $\theta$ a \emph{tangential structure}. 
	\begin{definition}
		A \emph{$\theta$-structure} on a real $\rk {d+1}$-vector bundle $V\to X$ is a bundle map $\hat l\colon V\to \theta^*U_{d+1}$. A \emph{$\theta$-structure on a manifold} $W^{{d+1}}$ is a $\theta$-structure on $TW$ and a \emph{$\theta$-manifold} is a pair $(W,\hat l)$ consisting of a manifold $W$ and a $\theta$-structure $\hat l$ on $W$. For $0\le k\le d$ a \emph{stabilized $\theta$-structure} on $M^k$ is a $\theta$-structure on $TM\oplus\underline\bbR^{{d+1}-k}$. 
	\end{definition}
	
	\begin{definition}
		Let $\theta\colon B\to B\ort(d+1)$ be a tangential structure. We call $\theta$ the \emph{(stabilized) tangential $2$-type of a $(d-1)$-dimensional manifold $M$} if the map $\theta$ is $2$-coconnected and there exists a (stabilized) $\theta$-structure $\hat l$ on $M$ such that the underlying map $l\colon M\to B$ is $2$-connected. 
	\end{definition} 
	
	\begin{example}[{\cite[Example 3.3]{actionofmcg}}, {\cite[Example 1.1.6]{ownthesis}}]\label{ex:tangential-structures}\leavevmode
		\begin{enumerate}
			\item The (stabilized) tangential $2$-type of a connected spin manifold $M$ of dimension at least $3$ is $B\Spin(d+1)\times B\pi_1(M)$.
			\item The (stabilized) tangential $2$-type of a simply connected, non-spinnable manifold $M$ of dimension at least $3$ is $B\SO(d+1)$.
		\end{enumerate}
	\end{example}

	\begin{definition}\label{def:structured-bordism-set}		
		Let $M_0^{d-1},M_1^{d-1}$ be closed manifolds with (stabilized) $\theta$-structures $\hat l_0,\hat l_1$. We define the \emph{cobordism set of manifolds with $\theta$-structure and fixed boundary}  by 
		\[\Omega^\theta_d\bigl((M_0,\hat l_0),(M_1,\hat l_1)\bigr):=\bigr\{(W,\hat\ell)\bigl\}/\sim.\]
		Here, $W$ is a $d$-manifold with boundary $\partial W = M_0\amalg M_1$ and $\hat\ell$ is a stabilized $\theta$-structure on $W$ such that $(-1)^i\hat l_i = \hat\ell|_{M_i}$. We call $M_0$ the \emph{incoming boundary} and $M_1$ the \emph{outgoing boundary}  (see \pref{Figure}{fig:bordism-set}). 
		\begin{figure}[ht]
		\centering
		\includegraphics[width=0.45\textwidth]{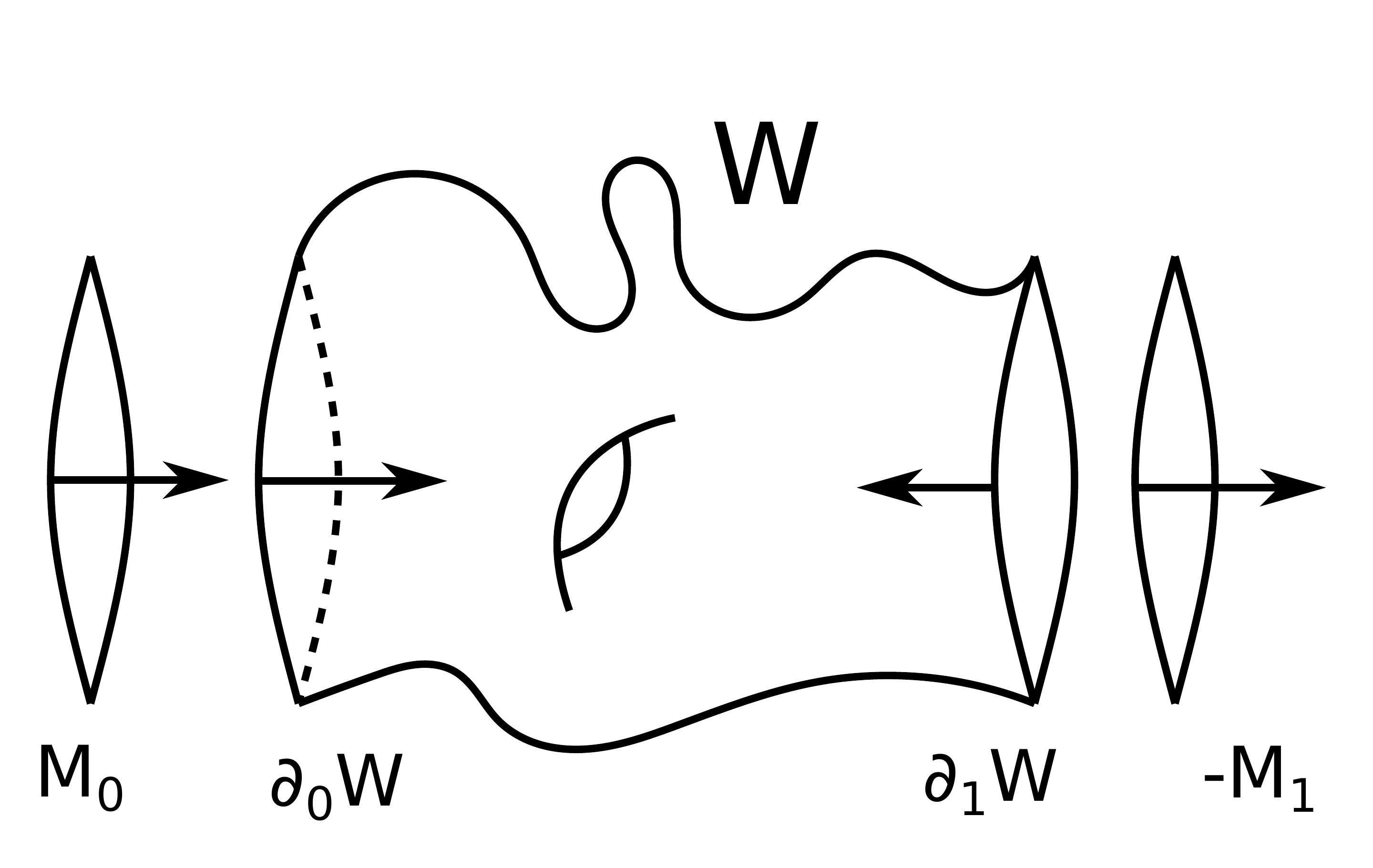}
		\caption{A representative of an element in $\Omega_{d}^\Spin(M_0,M_1)$.}\label{fig:bordism-set}
		\end{figure}\\
		The equivalence relation is given by the relative cobordism relation: We say that $(W,\ell)$ and $(W',\ell')$ are $\theta$-cobordant if there exists a $(d+1)$-dimensional $\theta$-manifold $(X,\ell_X)$ with corners such that there exists a partition of 
		\[\partial X=M_0\times I \cup W \cup M_1\times I\cup W'\]
		such that the $\theta$-structures fit together (see \pref{Figure}{fig:bordism-set-relation}).
		\begin{figure}[ht]
		\centering
		\includegraphics[width=0.7\textwidth]{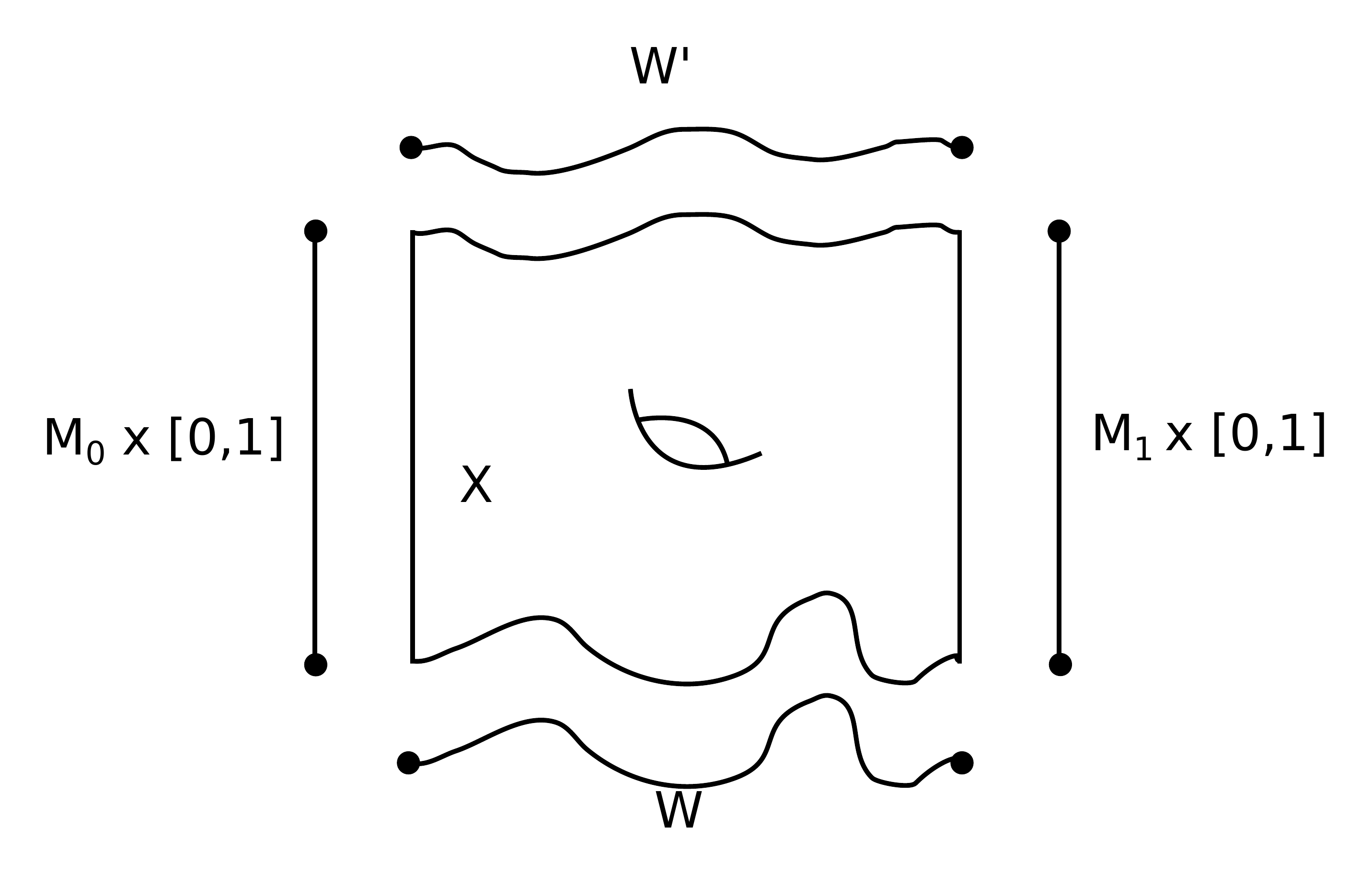}
		\caption{The cobordism relation.}\label{fig:bordism-set-relation}
		\end{figure}
	\end{definition}
	
	\noindent The main theorem of \cite{actionofmcg} is the following:
	\begin{thm}[{\cite[Theorem 3.6]{actionofmcg}}, see also {\cite[Theorem 3.3.1]{ownthesis}}]
		Let $d\ge7$ and let $\theta$ be a $2$-coconnected tangential structure. Let $(M_0,\hat l_0)$ and $(M_1,\hat l_1)$ be $(d-1)$-dimensional $\theta$-manifolds such that the underlying map $l_1\colon M_1\to B$ is $2$-connected. Then there is a map 
		\[\calS\colon\Omega_d^\theta((M_0,\hat l_0),(M_1,\hat l_1))\too[\calR^+(M_0),\calR^+(M_1)]\]
		such that $\calS(M_1\times[0,1]) = \id$ and $\calS$ is compatible with compositions, \ie $\calS(W\cup W') = \calS(W')\circ\calS(W)$.
	\end{thm}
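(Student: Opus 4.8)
The plan is to build $\calS(W)$ from a handle decomposition of a conveniently chosen representative of the $\theta$-cobordism class of $W$, to apply the parametrized Gromov--Lawson surgery construction handle by handle, and then to prove that the homotopy class of the resulting map depends on no choices.

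\emph{Step 1: pass to a good representative.} Given $(W,\hat\ell)$, I would first replace it, within its class in $\Omega^\theta_d((M_0,\hat l_0),(M_1,\hat l_1))$, by a cobordism for which $l\colon W\to B$ is $2$-connected: this is achieved by surgery below the middle dimension in the interior of $W$, whose obstructions to extending the $\theta$-structure across the traces vanish because $\theta$ is $2$-coconnected, and whose trace is a relative $\theta$-cobordism, so the cobordism class is unchanged (here $d\ge 7$ leaves ample room). Since $l_1\colon M_1\to B$ is $2$-connected, one further arranges that the pair $(W,M_1)$ is $2$-connected. Now choose a Morse function $f\colon W\to[0,1]$ with $f^{-1}(0)=M_0$ and $f^{-1}(1)=M_1$; the standard handle-cancellation and handle-trading arguments, realized by the Whitney trick (which requires $d\ge 6$), eliminate---reading the cobordism from the $M_1$ side---all handles of index $\le 2$. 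Equivalently, read from $M_0$, the resulting $W$ is built from $M_0\times[0,1]$ by attaching handles of index $\le d-3$ only, i.e.\ by a finite sequence of surgeries of codimension $\ge 3$.

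\emph{Step 2: the handle-by-handle construction.} For a single surgery of codimension $\ge 3$ on a closed manifold $N$, the Gromov--Lawson construction produces a psc-metric on the surgered manifold $N'$ from one on $N$; its parametrized refinement (Chernysh; see also Walsh, and the family versions in the literature) promotes this to a continuous map $\calR^+(N)\to\calR^+(N')$ whose homotopy class, once the standard metrics near the surgery locus are fixed, does not depend on the remaining auxiliary data. Composing these maps over the handles of the decomposition from Step 1 yields a map $\calS(W,f)\colon\calR^+(M_0)\to\calR^+(M_1)$; in particular $\calS(M_1\times[0,1])=\id$, because the product cobordism carries the empty handle decomposition.

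\emph{Step 3: well-definedness, functoriality, descent.} I would then show the homotopy class of $\calS(W,f)$ is independent of $f$ and of the handle decomposition: any two handle decompositions with all indices $\le d-3$ are joined by a finite sequence of isotopies of attaching maps, handle slides, and births/deaths of complementary pairs (Cerf theory), and one verifies that the Gromov--Lawson construction is compatible with each such move up to homotopy---this is again the content of the parametrized surgery theorem. Descent to $\theta$-cobordism classes is handled by the same machinery one dimension higher: a relative $\theta$-cobordism $(X,\ell_X)$ between $(W,\hat\ell)$ and $(W',\hat\ell')$ is Morse-theoretically a path of cobordisms, along which the induced maps vary by homotopies. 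Finally $\calS(W\cup W')=\calS(W')\circ\calS(W)$ follows by concatenating good handle decompositions: handles of $W'$ attached on top of $W$ keep their index $\le d-3$, so a good representative of each factor produces a good representative of the composite, on which the Gromov--Lawson maps compose strictly, and the independence of choices just established removes any remaining ambiguity.

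\emph{Main obstacle.} The crux lies in Steps 2--3: the parametrized, family-wise Gromov--Lawson surgery theorem and the verification that the induced maps are canonical enough to be insensitive to the handle decomposition and to behave functorially. The delicate point is the coherent interpolation of psc-metrics in families near surgery regions; by contrast, the handle-trading reduction of Step 1---conceptually the reason the hypotheses on $\theta$ and on $l_1$ are exactly what is needed---is a standard $h$-cobordism-type argument.
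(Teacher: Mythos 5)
This theorem is quoted from \cite{actionofmcg} (see also \cite{ownthesis}); the present paper does not reprove it, so the comparison is really with the argument in those references, which in turn lean heavily on the machinery of Ebert--Randal-Williams \cite{erw_psc2}.

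Your overall architecture is the right one, and Steps~1 and~2 are essentially what happens in the cited proof: surgery in the interior of $W$ (enabled by $2$-coconnectedness of $\theta$) to make $l\colon W\to B$ $2$-connected, then, using that $l_1$ is $2$-connected, a handle decomposition of $W$ relative to $M_1$ with all handles of index $\ge 3$ --- equivalently a decomposition into surgeries of codimension $\ge 3$ read from $M_0$ --- followed by the parametrized Gromov--Lawson--Chernysh map on each handle. Two small caveats: first, passing from $2$-connectedness of $l$ and $l_1$ to $2$-connectedness of the pair $(W,M_1)$ needs a short homotopy-exact-sequence argument and is not completely automatic; second, the relevant bound in the literature is $d\ge 7$ rather than the $d\ge 6$ you quote for the Whitney trick, because the well-definedness step ends up running the same handle arguments on an auxiliary cobordism one dimension up.

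The genuine divergence is in Step~3. You propose to prove independence of the handle decomposition by Cerf theory (isotopies, handle slides, births/deaths) and to handle descent to cobordism classes by a "parametrized version one dimension higher", describing a relative $\theta$-cobordism as "a path of cobordisms". That last identification is not literally correct --- a relative cobordism $X$ between $W$ and $W'$ is not in general a one-parameter family of cobordisms from $M_0$ to $M_1$, and making the Cerf strategy rigorous would require a careful, move-by-move verification that the Chernysh construction is coherent under handle slides and birth/death pairs, which is a substantial technical program not carried out in the cited sources. The actual proof avoids this: following \cite{erw_psc2}, one works with the spaces $\calR^+(W)_{g_0}$ of psc-metrics on the cobordism with prescribed incoming boundary condition, proves (using the handle decomposition from Step~1 together with Chernysh's theorem) that the restriction map $\calR^+(W)_{g_0}\to\calR^+(M_1)$ is a weak homotopy equivalence, and defines $\calS(W)(g_0)$ by restricting any such metric to $M_1$. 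Cobordism invariance and functoriality then drop out structurally from gluing metrics along the auxiliary $(d+1)$-dimensional cobordism $X$, with no Cerf-theoretic bookkeeping. So your proposal identifies the correct ingredients and the correct source of the hypotheses on $\theta$ and $l_1$, but routes the well-definedness argument through a different and considerably more delicate piece of machinery than the paper actually uses.
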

	\noindent $\calS$ is called the \emph{surgery map} and we will sometimes write $\calS(W) = \calS_W$. Note that if $B$ is not connected, say $B=B'\amalg B''$, and $M_i = M_i'\amalg M_i''$ for $i=0,1$ and we have 
	\[\Omega_d^\theta(M_0,M_1) = \Omega_d^{\theta'}(M_0',M_1')\times \Omega_d^{\theta''}(M_0'',M_1'').\]  

	\noindent The following proposition is one of the key features of the cobordism relation.
	\begin{prop}[{\cite[Proposition 3.25]{actionofmcg}}, see also {\cite[Proposition 1.3.3]{ownthesis}})]\label{prop:double-is-nullbordant}
		Let $W^d\colon M_0 \leadsto M_1$ be a $\theta$-cobordism. Then there exists a $\theta$-structure on $W^\op\colon M_1\leadsto M_0$ such that $W\cup W^\op\sim M_0\times [0,1]\ \mathrm{relative\ to}\ M_0\times\{0,1\}$. In particular, if $W\colon \emptyset\leadsto M$ is a $\theta$-nullcobordism, the double $dW\coloneqq W\cup W^\op$ is $\theta$-nullcobordant and $W^\op\amalg W$ is $\theta$-cobordant to the cylinder $M\times[0,1]$.
	\end{prop}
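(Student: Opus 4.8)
The plan is to realize the relative $\theta$-cobordism by one explicit $(d+1)$-manifold with corners, namely the product $X\coloneqq W\times[0,1]$, inside whose boundary the double $W\cup_{M_1}W^\op$ sits after a "folding" of $\partial X$. Writing $\partial W=M_0\amalg M_1$ and fixing a collar $\partial W\times[0,1)\subset W$, the manifold $X$ has codimension-one faces $W\times\{0\}$, $W\times\{1\}$, $M_0\times[0,1]$ and $M_1\times[0,1]$, with codimension-two corners $M_i\times\{0,1\}$. I would regroup $\partial X$ as follows: the union
\[ V_0\coloneqq (W\times\{0\})\cup_{M_1\times\{0\}}(M_1\times[0,1])\cup_{M_1\times\{1\}}(W\times\{1\}) \]
is diffeomorphic to $W\cup_{M_1}W^\op$ (the factor $M_1\times[0,1]$ being an internal collar of the gluing locus $M_1$), and its boundary is $M_0\times\{0\}\amalg M_0\times\{1\}$. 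Subdividing the remaining face $M_0\times[0,1]$ into $M_0\times[0,\tfrac13]\cup M_0\times[\tfrac13,\tfrac23]\cup M_0\times[\tfrac23,1]$ exhibits $\partial X$ as the union of $V_0$ and $V_1\coloneqq M_0\times[\tfrac13,\tfrac23]$ along collars of their common boundary $M_0\amalg M_0$. After rounding corners this is precisely the shape of the relative cobordism relation of \pref{Definition}{def:structured-bordism-set} between the cobordisms $W\cup_{M_1}W^\op$ and $M_0\times[0,1]$, regarded as cobordisms $M_0\leadsto M_0$ and taken relative to $M_0\times\{0,1\}$.

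Next I would install the $\theta$-structures. Since $\dim X=d+1$, a $\theta$-structure on $X$ is just a bundle map $TX\to\theta^*U_{d+1}$, and the canonical identification $TX\cong\pi_W^*(TW\oplus\underline{\bbR})$, with $\underline{\bbR}$ the $[0,1]$-direction, lets me pull back the given stabilized $\theta$-structure $\hat\ell$ on $W$ to a $\theta$-structure $\hat\ell_X$ on $X$ which is a product in the $[0,1]$-direction. Its restrictions to $W\times\{0\}$ and $W\times\{1\}$ are, up to the reflection coming from the outward normal, $\hat\ell$ and its reflection; this forces the choice of $\theta$-structure on $W^\op$, namely the reflection of $\hat\ell$ in the stabilising direction, and a short check against the convention $(-1)^i\hat l_i=\hat\ell|_{M_i}$ confirms that this is a legitimate $\theta$-structure on the cobordism $W^\op\colon M_1\leadsto M_0$. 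Along the face $M_0\times[0,1]$ the structure $\hat\ell_X$ is the product of $\hat l_0=\hat\ell|_{M_0}$ with the interval, so $V_1$ inherits the product $\theta$-structure on $M_0\times[0,1]$ and the intervening collars are products as well; hence $(X,\hat\ell_X)$ is the desired relative $\theta$-cobordism, which proves the main assertion.

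Finally, the two "in particular" clauses are the degenerate cases. Taking $M_0=\emptyset$ (so $W\colon\emptyset\leadsto M$), the relation reads $dW=W\cup W^\op\sim\emptyset$, i.e.\ the double is $\theta$-nullcobordant; applying the main assertion instead to $W^\op\colon M\leadsto\emptyset$ (the case $M_1=\emptyset$) and using $(W^\op)^\op=W$ gives $W^\op\amalg W=W^\op\cup_\emptyset W\sim M\times[0,1]$ relative to $M\times\{0,1\}$. I expect the only genuine obstacle to be the final verification in the $\theta$-bookkeeping: one must simultaneously track the reflection defining $W^\op$, the reflections introduced by the outward-normal conventions on the various faces of $X$, and the signs $(-1)^i$, and check that these are mutually compatible along the codimension-two corners $M_i\times\{0,1\}$ (and compatible with the chosen collars); everything else — that the several product structures agree and that the folded $\partial X$ is smooth after rounding — is routine.
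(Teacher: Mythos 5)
The paper does not itself prove this proposition; it is quoted from \cite{actionofmcg} and \cite{ownthesis}. Your argument is the standard one: realise the relative cobordism by $X=W\times[0,1]$ with its product $\theta$-structure via $TX\cong\pi_W^*(TW\oplus\underline\bbR)$, regroup $\partial X$ so that $W\times\{0\}\cup M_1\times[0,1]\cup W\times\{1\}$ becomes $W\cup_{M_1}W^\op$ while the remaining face $M_0\times[0,1]$ supplies the cylinder plus the two side collars, and derive the two ``in particular'' clauses by specialising to $M_0=\emptyset$ and to $W^\op\colon M\leadsto\emptyset$. The sign and corner bookkeeping you defer at the end is genuinely routine, and the proof is correct.
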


\noindent Let us close this section by recalling the definition $H$-spaces. From now on the symbol \enquote{$=$} will denote equality in the homotopy category of spaces, i.e. $f=f'$ means $f$ and $f'$ are homotopic. Let us start by recalling the notion of an $H$-space.
\begin{definition}
	An \emph{$H$-space} is a triple $(X,\mu,e)$ where $X$ is a space, $e\in X$ and $\mu\colon X\times X\to X$ is a homotopy class of a map, such that $\mu(e,\_)=\mu(\_,e)=\id$. An $H$-space is called \emph{homotopy-commutative} if $\mu\circ \tau=\mu$, for $\tau\colon X\times X\to X\times X$ the switch map and it is called \emph{homotopy-associative} if $\mu\circ (\mu,\id)=\mu\circ(\id,\mu)$. An equivalence of $H$-spaces $(X,\mu,e)$ and $(X', \mu', e')$ is a (homotopy class of a) homotopy equivalence $\varphi\colon X\to X'$ such that $\mu\circ (\varphi,\varphi)=\varphi\circ\mu$ and $\varphi(e) \sim e'$. 
\end{definition}

\begin{remark}
	Usually the definition of an $H$-space involves the choice of an actual map $X\times X\to X$. The definition given here is more in spirit of an $H$-space being a unital magma object in the homotopy category of spaces. Furthermore, since the neutral element of an $H$-space is only well-defined and unique up to homotopy it suffices to specify the component of $e$. 
\end{remark}

\begin{definition}
	Let $Y$ be a space and let $X=(X,\mu,e)$ be an $H$-space. An \emph{action of $X$ on $Y$ in the homotopy category} is a homotopy class of a map
	\[\rho\colon X\times Y\to Y,\]
	such that $\rho(e,\_) = \id_Y$ and $\rho(\mu,\id) = \rho(\id,\rho)$.
\end{definition}

\section{Graphical calculus}\label{sec:graphicalcalculus}

\noindent Let $d\ge7$, let $M^{d-1}$ be a manifold and let $\theta$ be its tangential $2$-type. Let $\hat l$ be a $2$-connected $\theta$-structure and let $W\colon \emptyset\leadsto M$ be a $\theta$-nullcobordism of $(M, \hat l)$.We get a map $\calS(W)\colon\calR^+(\emptyset) = \{g_\emptyset\}\to\calR^+(M)$ which gives a base-point component $e_W$ of $\calR^+(M)$. Furthermore, let $X_W\coloneqq W^\op\amalg W^\op\amalg W\colon M\amalg M\leadsto M$ (see \pref{Figure}{fig:definition-of-h-space}). We define 
\[\mu_W\coloneqq S(X_W)\colon\calR^+(M)\times \calR^+(M)\to\calR^+(M)\]
\vspace{-1em}
 \begin{figure}[ht]
	\centering
	\includegraphics[width=0.25\textwidth]{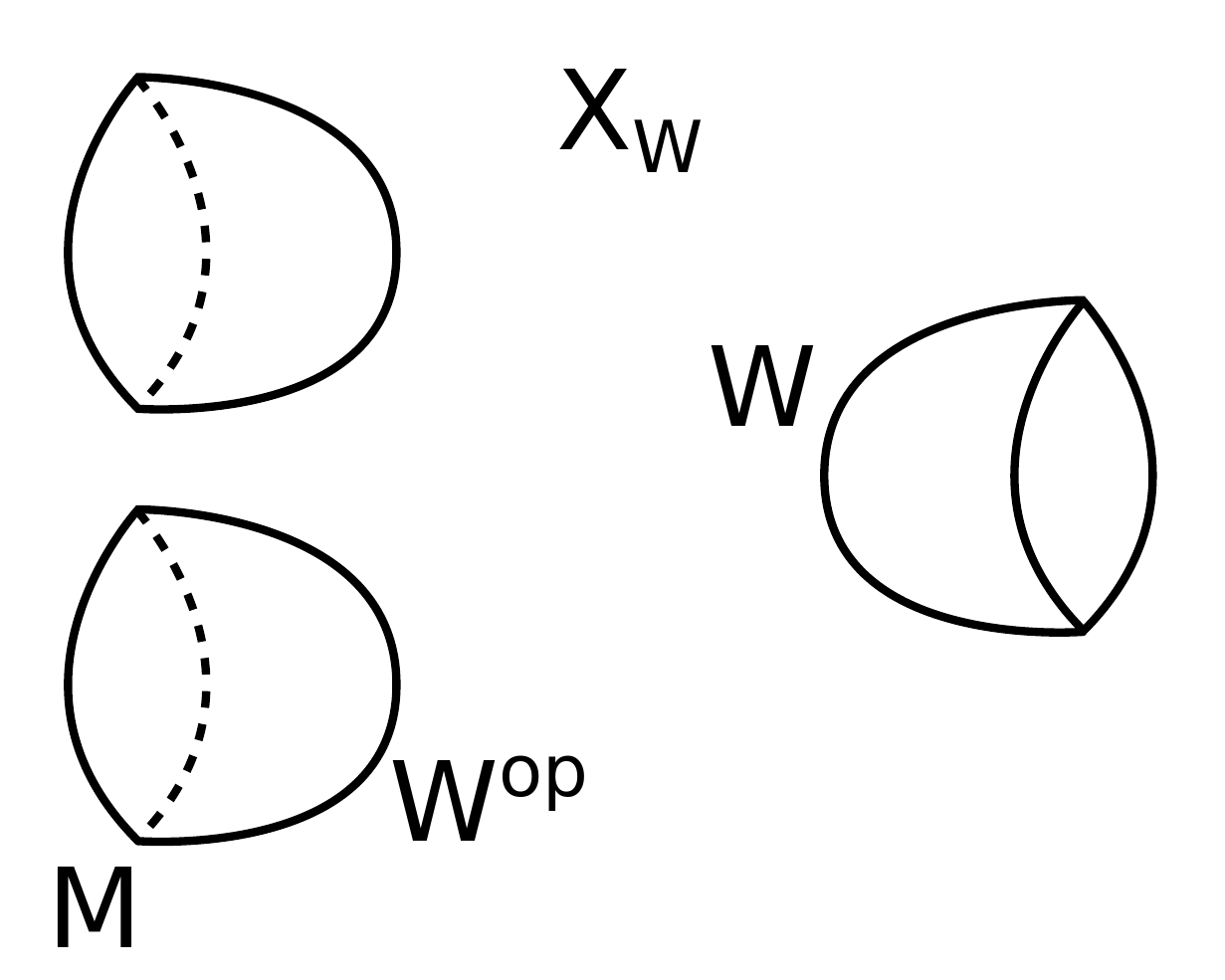}
	\caption{The $\theta$-cobordism $X_W\colon M\amalg M\leadsto M$.}\label{fig:definition-of-h-space}
\end{figure}

\begin{thm}\label{thm:main}
	$(\calR^+(M), \mu_W, e_W)$ is a homotopy-commutative, homotopy-associa-tive $H$-space.
\end{thm}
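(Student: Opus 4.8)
The plan is to verify the $H$-space axioms entirely at the level of the cobordism set $\Omega_d^\theta(M\amalg M, M)$ (and its analogues with more boundary components), using that $\calS$ is functorial: $\calS(W\cup W') = \calS(W')\circ\calS(W)$, and $\calS(M\times[0,1])=\id$. Since $\mu_W = \calS(X_W)$ with $X_W = W^\op\amalg W^\op\amalg W$, and $e_W=\calS(W)(g_\emptyset)$, each axiom becomes an identity between homotopy classes of maps that it suffices to check on the corresponding cobordism classes. The key input is \pref{Proposition}{prop:double-is-nullbordant}: $W\cup W^\op$ is $\theta$-cobordant rel boundary to $M\times[0,1]$, equivalently $W^\op\amalg W \simeq M\times[0,1]$ as cobordisms $M\leadsto M$. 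This is the ``cancellation'' rule of the graphical calculus, and it is the engine behind every verification below.

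First I would check the unit axiom. The composite $\calR^+(M) = \{g_\emptyset\}\times\calR^+(M)\xrightarrow{(e_W,\id)}\calR^+(M)\times\calR^+(M)\xrightarrow{\mu_W}\calR^+(M)$ is, by functoriality, $\calS$ applied to the cobordism $(W\amalg M\times[0,1])\cup X_W\colon \emptyset\amalg M \leadsto M$. Gluing $W$ into the first incoming end of $X_W=W^\op\amalg W^\op\amalg W$ produces $(W\cup W^\op)\amalg W^\op\amalg W$, which by \pref{Proposition}{prop:double-is-nullbordant} is cobordant rel boundary to $(M\times[0,1])\amalg W^\op\amalg W$; but this last cobordism $M\leadsto M$ is exactly $W^\op\amalg W$ viewed with a cylinder summand, which is again cobordant to $M\times[0,1]$. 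Hence $\mu_W(e_W,\_)=\calS(M\times[0,1])=\id$. The right unit is identical after using homotopy-commutativity, or directly by the same computation on the second incoming end.

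Next, homotopy-commutativity: $\mu_W\circ\tau$ is $\calS$ of the cobordism obtained from $X_W$ by swapping its two incoming boundary components, i.e. $W^\op\amalg W^\op\amalg W$ with the two $W^\op$'s interchanged — but these two pieces are diffeomorphic with matching $\theta$-structures (they are literally the same cobordism), so the swapped cobordism is diffeomorphic to $X_W$ rel boundary, hence cobordant to it, giving $\mu_W\circ\tau=\mu_W$. For homotopy-associativity, both $\mu_W\circ(\mu_W,\id)$ and $\mu_W\circ(\id,\mu_W)$ are $\calS$ of explicit cobordisms $M\amalg M\amalg M\leadsto M$: gluing a copy of $X_W$ onto one incoming leg of $X_W$. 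The first gives $(X_W\amalg M\times[0,1])\cup X_W$, which unravels to three copies of $W^\op$ glued to a pattern involving $W\cup W^\op$; applying \pref{Proposition}{prop:double-is-nullbordant} to cancel the $W\cup W^\op$ that appears yields $W^\op\amalg W^\op\amalg W^\op\amalg W$, i.e. $\calS$ of the ``three-legged'' cobordism. The second composite produces the mirror pattern, which after the analogous cancellation gives the same three-legged cobordism $W^{\op\,\amalg 3}\amalg W$ up to a diffeomorphism rel boundary permuting the incoming legs. Since $\calS$ depends only on the cobordism class, the two agree.

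The main obstacle is bookkeeping: one must set up the graphical calculus carefully enough that ``gluing $X_W$ into a leg of $X_W$ and cancelling a double'' is a rigorous manipulation of $\theta$-structured manifolds with corners, not just a picture. Concretely, the subtle point is that \pref{Proposition}{prop:double-is-nullbordant} gives a cobordism \emph{rel the boundary $M\times\{0,1\}$}, so when a $W\cup W^\op$ appears as a sub-cobordism sandwiched inside a larger composite, one needs that the ambient $\theta$-structure restricts compatibly so that the rel-boundary cobordism can be performed inside the larger manifold (glue on $X\times$ the complementary cobordism). This is where the hypotheses $d\ge 6$ (so that $d$, the dimension of the cobordisms, is $\ge 7$) and $2$-connectivity of $\hat l$ enter, via the cited theorem guaranteeing $\calS$ is well-defined on cobordism classes; granting that theorem, the remaining work is the diagrammatic identities above, which I would organize as a short lemma: ``$\calS$ of any cobordism built from copies of $W$, $W^\op$ and cylinders by gluing along $M$'s depends only on the number of $W$'s minus the number of $W^\op$'s attached to each outgoing component, modulo the cancellation $W^\op\amalg W\simeq M\times[0,1]$.''
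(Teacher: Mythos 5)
Your proposal follows essentially the same route as the paper: each $H$-space axiom is reduced, via functoriality of $\calS$ and $\calS(M\times I)=\id$, to an identity in the cobordism set $\Omega_d^\theta(M^{\amalg k},M)$, and the identities are then verified using \pref{Proposition}{prop:double-is-nullbordant} as the cancellation rule $W^\op\amalg W\sim M\times[0,1]$. This is exactly the graphical calculus the paper employs.

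One bookkeeping slip in the unit computation is worth flagging. Gluing $W\colon\emptyset\leadsto M$ into the first incoming leg of $X_W$ produces $W\cup W^\op$, which is the \emph{closed} double $dW\colon\emptyset\leadsto\emptyset$; by \pref{Proposition}{prop:double-is-nullbordant} it is cobordant to $\emptyset\times[0,1]=\emptyset$, not to $M\times[0,1]$. Your intermediate expression $(M\times[0,1])\amalg W^\op\amalg W$ therefore does not type-check as a cobordism $M\leadsto M$. The correct reduction deletes the closed nullcobordant summand $dW$, leaving $W^\op\amalg W\colon M\leadsto M$, and then applies the second half of the same proposition to identify this with $M\times[0,1]$. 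The final conclusion is unaffected, and the same pattern (a closed $dW$ appears and is deleted; a $W^\op\amalg W$ appears and is replaced by a cylinder) is exactly what the paper's Figure 4 depicts. Your commutativity and associativity arguments are fine; the connectedness of the outgoing boundary $M$ is what licenses the swap and the leg-permutation, and the paper's remark after the corollary makes precisely this point, so you may want to record it explicitly when you state your proposed closing lemma.
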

 
\begin{proof}
	First we show that $e_W$ really is the neutral element. We need to show that $\mu_W\circ (\id,\calS(W))$ is homotopic to the identity. Now $(\id,\calS(W)) = \calS_{(M\times I)\amalg W}$ and so
	\[\mu_W\circ(\id,\calS(W))=\calS_{X_W}\circ \calS_{(M\times I)\amalg W} = \calS_{(M\times I)\cup W^{\op}\amalg dW \amalg W} = \calS_{(M\times I)\cup (M\times I)} \sim \id\]
	as the double of $W$ is nullcobordant by \pref{Proposition}{prop:double-is-nullbordant}. This computation relies on the cobordism relation and is depicted in \pref{Figure}{fig:unital}.
	\begin{figure}[ht]
	\centering
	\includegraphics[width=0.5\textwidth]{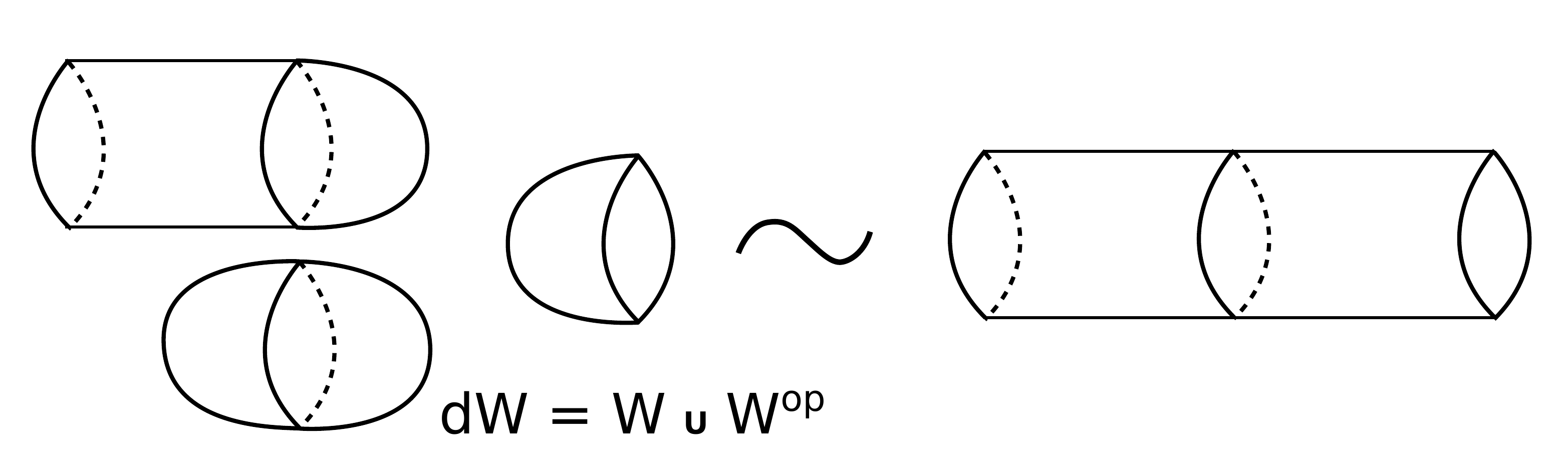}
	\caption{$\mu_W\circ (\id,\calS(W))=\id$}\label{fig:unital}
	\end{figure}\\
	For commutativity, the composition $\mu_W\circ \tau$, where $\tau$ is the map switching the factors, has to be homotopic to $\mu_W$. The map $\tau$ however is given by the surgery map $\calS$ for the cobordism in \pref{Figure}{fig:commutative} and the composition of this cobordism with $X_W$ is cobordant to $X_W$ relative to the boundary. 
	\begin{figure}[ht]
	\centering
	\includegraphics[width=0.6\textwidth]{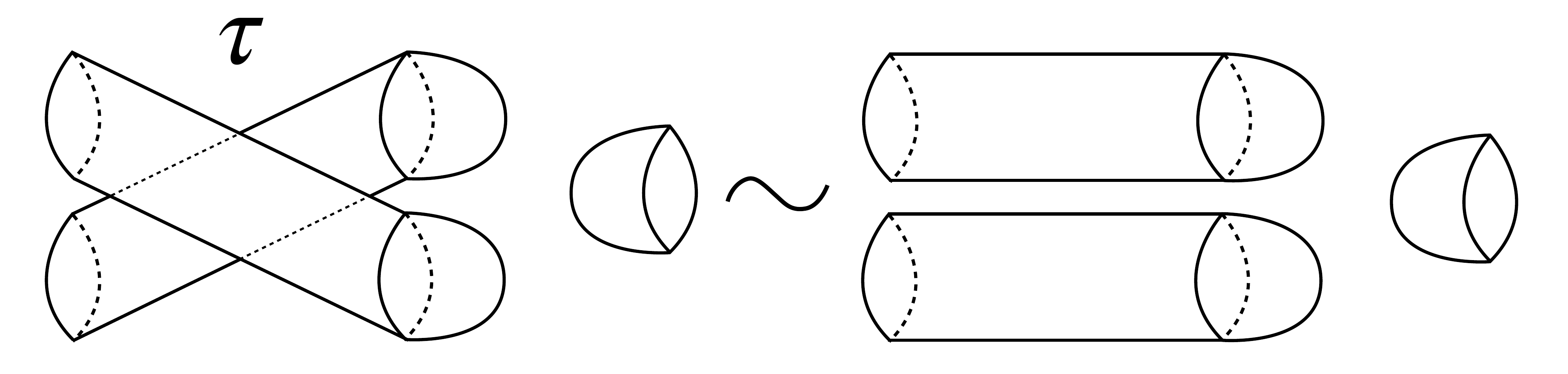}
	\caption{$\mu_W\circ\tau = \mu_W$.}\label{fig:commutative}
	\end{figure}\\
	For associativity we need to show that $\mu\circ (\mu,\id)=\mu\circ(\id,\mu)$. Again, all maps are given by surgery maps and the proof is finished by \pref{Figure}{fig:associative}.
	\begin{figure}[ht]
	\centering
	\includegraphics[width=0.6\textwidth]{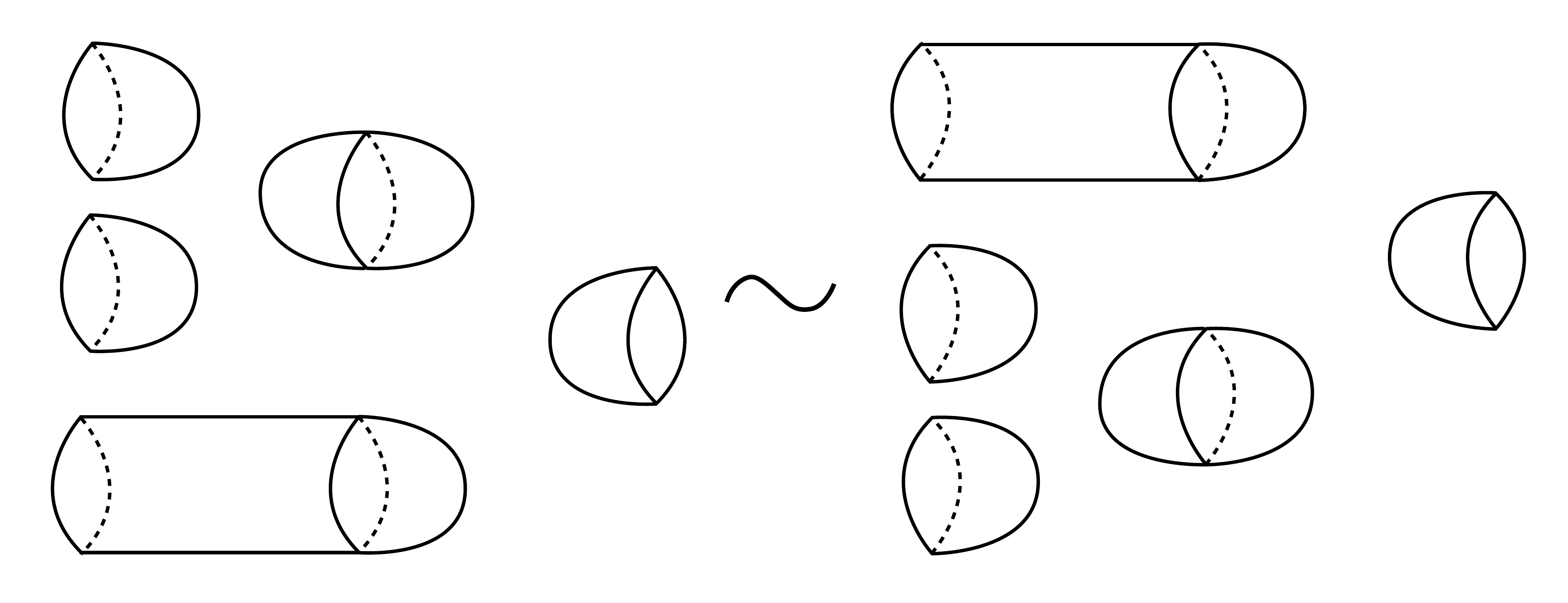}
	\caption{$\mu\circ (\mu,\id)=\mu\circ(\id,\mu)$.}\label{fig:associative}
	\end{figure}\\\ \qedhere
\end{proof}

\begin{cor}
	The set $\pi_0(\calR^+(M))$ carries the structure of an abelian monoid induced by $\mu_W$, $\pi_1(\calR^+(M),e_W)$ is an abelian group and $H^*(\calR^+(M);\bbF)$ is a graded Hopf algebra for any field $\bbF$.
\end{cor}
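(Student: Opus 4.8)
The corollary is a standard consequence of the $H$-space structure from \pref{Theorem}{thm:main}, so the plan is essentially to unwind definitions and invoke classical algebraic topology. I will address the three assertions in turn.

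\smallskip

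\noindent\emph{The monoid structure on $\pi_0$.} Applying the functor $\pi_0(-)$ to the homotopy class $\mu_W\colon \calR^+(M)\times\calR^+(M)\to\calR^+(M)$ and using the natural bijection $\pi_0(X\times Y)\cong\pi_0(X)\times\pi_0(Y)$ gives a well-defined binary operation on $\pi_0(\calR^+(M))$; it only depends on the homotopy class of $\mu_W$, so this is legitimate. The unit axiom, associativity, and commutativity from \pref{Theorem}{thm:main} are identities of homotopy classes of maps, hence descend to genuine identities after applying $\pi_0$, yielding an abelian monoid with unit $[e_W]$.

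\smallskip

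\noindent\emph{The abelian group structure on $\pi_1$.} Here the point is the classical Eckmann--Hilton argument. Basing $\calR^+(M)$ at $e_W$, the multiplication $\mu_W$ induces a second binary operation $*$ on $\pi_1(\calR^+(M),e_W)$ (the unit axiom for $\mu_W$, applied on the nose up to homotopy, guarantees that $\mu_W$ restricted to a neighbourhood of $(e_W,e_W)$ is unital up to based homotopy, so loops can be multiplied pointwise via $\mu_W$). This operation shares the unit $e_W$ with the usual concatenation product on $\pi_1$, and the interchange law between $*$ and concatenation holds because $\mu_W$ is a map of spaces (so it commutes with gluing of squares). Eckmann--Hilton then forces the two operations to coincide and to be commutative, so $\pi_1(\calR^+(M),e_W)$ is abelian. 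One should note that $\pi_1$ of any path component of $\calR^+(M)$ containing a point in the same component as $e_W$ is covered; strictly the statement is about the component of $e_W$, which is the convention fixed in the paper.

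\smallskip

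\noindent\emph{The Hopf algebra structure on $H^*(-;\bbF)$.} For any space $X$ with a homotopy-associative, homotopy-commutative multiplication $\mu$ and homotopy unit $e$, the cohomology $H^*(X;\bbF)$ is a graded-commutative $\bbF$-algebra via cup product, and $\mu^*\colon H^*(X;\bbF)\to H^*(X\times X;\bbF)$ together with the Künneth isomorphism (valid over a field) furnishes a comultiplication $H^*(X;\bbF)\to H^*(X;\bbF)\otimes H^*(X;\bbF)$. That $\mu^*$ is an algebra homomorphism is automatic since $\mu^*$ and the Künneth map are; coassociativity and cocommutativity follow from homotopy-associativity and homotopy-commutativity of $\mu$; and the unit $e$ provides the counit. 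Hence $H^*(\calR^+(M);\bbF)$ is a graded Hopf algebra. (No finiteness hypothesis on the cohomology is needed for the Hopf algebra axioms themselves, only the Künneth theorem, which holds over a field.)

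\smallskip

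\noindent The main subtlety — really the only one — is the Eckmann--Hilton step for $\pi_1$: one must be slightly careful that the homotopy-theoretic unit $e_W$ can be upgraded to a strict-enough unit to define the pointwise product of based loops. This is handled exactly as in the classical proof that $\pi_1$ of a topological group (or of a space with a unital-up-to-homotopy multiplication fixing the basepoint) is abelian: replace $\mu_W$ by a homotopic map that is strictly unital along $\{e_W\}\times X\cup X\times\{e_W\}$, which is possible because the inclusion of this subspace is a cofibration. Everything else is formal.
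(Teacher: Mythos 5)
The paper offers no proof of this corollary—it is stated as an immediate formal consequence of \pref{Theorem}{thm:main}—so your careful unwinding of the standard $H$-space facts is exactly the argument the paper intends. Your treatment of $\pi_0$ and of $\pi_1$ via Eckmann--Hilton (including the cofibration trick to make the unit strict enough) is correct and is precisely the classical reasoning.

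One parenthetical claim is, however, inaccurate and worth flagging. You write that \enquote{No finiteness hypothesis on the cohomology is needed\dots only the K\"unneth theorem, which holds over a field.} The K\"unneth \emph{isomorphism} that holds unconditionally over a field is the one in \emph{homology}: $H_*(X\times X;\bbF)\cong H_*(X;\bbF)\otimes H_*(X;\bbF)$. The corresponding cross-product map in cohomology, $H^*(X;\bbF)\otimes H^*(X;\bbF)\to H^*(X\times X;\bbF)$, is always injective but is an isomorphism only under a finite-type hypothesis (e.g.\ $H^k(X;\bbF)$ finite-dimensional for each $k$). Without that, $\mu^*$ does not factor through the tensor square, and one does not get a comultiplication on $H^*$. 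The clean statements are: $H_*(\calR^+(M);\bbF)$ is a Hopf algebra with no hypothesis, and $H^*(\calR^+(M);\bbF)$ is a Hopf algebra provided the cohomology is of finite type (in which case it is dual to the homology Hopf algebra). To be fair, the paper itself elides this point, so you are in good company, but since you explicitly asserted the opposite it should be corrected. Everything else in your argument is sound.
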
  
\newpage

\begin{remark}
A word of warning is appropriate here: Using pictures to do computations can be dangerous as illustrated by the following example: consider the cobordism $X\coloneqq W^{\op}\amalg W^{\op}\amalg W\amalg W\colon M\amalg M\leadsto M\amalg M$ (see \pref{Figure}{fig:warning}). 
	\begin{figure}[ht]
	\centering
	\includegraphics[width=0.2\textwidth]{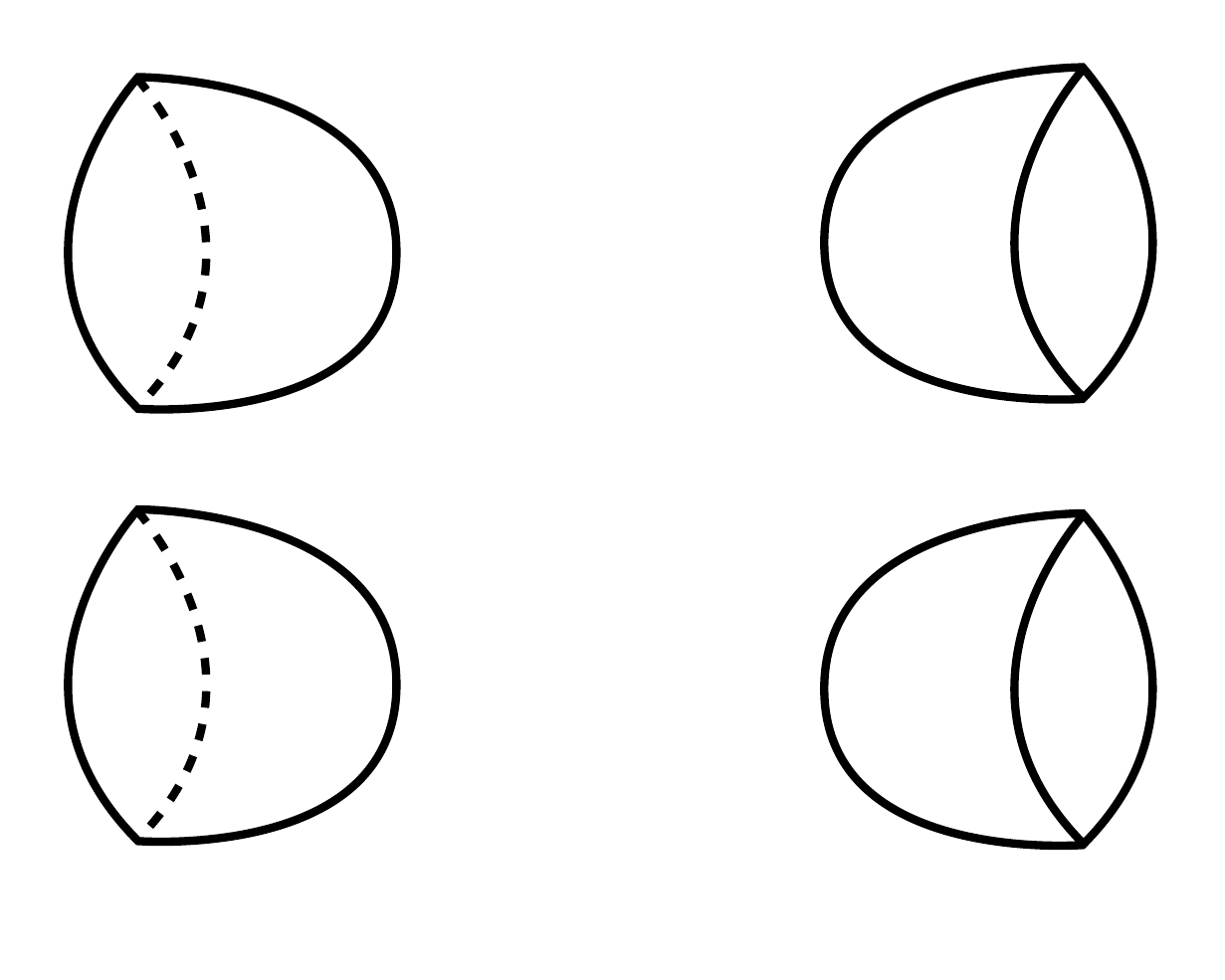}
	\caption{$X\coloneqq W^{\op}\amalg W^{\op}\amalg W\amalg W\colon M\amalg M\leadsto M\amalg M$}\label{fig:warning}
	\end{figure}
	
	\noindent We then have two decompositions $(W^{\op}\amalg W) \amalg (W^{\op}\amalg W)=X=X_W\amalg W$ of $X$. One might be tempted to think that $(\mu_W,e) =  \calS_{X_W\amalg W} = \calS_{(W^{\op}\amalg W) \amalg (W^{\op}\amalg W)} = (\id,\id)$ implying that $\calR^+(M)$ is contractible. This computation is wrong, as one needs to consider the tangential $2$-type of the outgoing boundary which is not connected in this case. Hence the corresponding tangential $2$-type $\theta\colon B\to BO(d+1)$ is not connected (in the sense that $B$ is not connected) and a $\theta$-structure on $W$ is a map into a disconnected space that respects the given one on the outgoing boundary. Therefore one has to specify which component of $W$ is mapped to which component of $B$. In particular, the components of the incoming boundary are already coupled with components of the outgoing boundary. The manifolds $(W^{\op}\amalg W) \amalg (W^{\op}\amalg W)$ and $X_W\amalg W$ are different as $\theta$-manifolds, even though the underlying manifolds are equal.
	
	However, when the outgoing boundary is connected so is the corresponding tangential $2$-type and one does not need to be as careful. This is the case in the computations in the proof of \pref{Theorem}{thm:main} and will be in every computation in this section.
\end{remark}

\begin{example}\label{ex:sphere-h-space}
	By the definition of $\calS$ we get $e_D = g_\circ^{d-1}$ for $D = D^{d}\colon\emptyset\leadsto S^{d-1}$.
\end{example}

\noindent The next lemma explains the dependence of $\mu_W$ on $W$ and on $M$.

\begin{lem}\label{lem:equivalence-of-hspaces}
	Let $W\colon\emptyset\leadsto M$ and $V\colon \emptyset\leadsto N$ be to $\theta$-nullcobordisms. Then 
	\[\varphi\coloneqq \calS(W^\op\amalg V)\colon(\calR^+(M),\mu_W,e_W)\too(\calR^+(N),\mu_V,e_V)\]
	is an equivalence of $H$-spaces. If $M=N$ is simply connected and $\Spin$ and $V=W\amalg B$ for a closed $\Spin$-manifold $B$ with non-vanishing $\alpha$-invariant (cf. \cite[p. 92]{lawsonmichelsohn}), then $\varphi$ does not fix any path component and in particular is not homotopic to the identity.
\end{lem}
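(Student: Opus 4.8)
The plan is to prove the two assertions of \pref{Lemma}{lem:equivalence-of-hspaces} separately, using graphical calculus for the first and the $\alpha$-invariant / index theory for the second.

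\emph{The $H$-space equivalence.} First I would check that $\varphi = \calS(W^\op\amalg V)$ is a homotopy equivalence: its composition with $\calS(V^\op\amalg W)$ is, by compatibility of $\calS$ with compositions, $\calS\bigl((W^\op\amalg V)\cup(V^\op\amalg W)\bigr) = \calS\bigl(W^\op\cup(V\cup V^\op)\cup W\bigr)$; since $V\cup V^\op\sim N\times I$ relative to its boundary by \pref{Proposition}{prop:double-is-nullbordant}, this equals $\calS(W^\op\cup W)$, and since $W^\op\cup W$ is $\theta$-cobordant (as a cobordism $M\leadsto M$, outgoing boundary connected) to $M\times I$, again by \pref{Proposition}{prop:double-is-nullbordant}, we get $\calS(W^\op\cup W)=\id$. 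Symmetrically the other composite is the identity, so $\varphi$ is a homotopy equivalence with homotopy inverse $\calS(V^\op\amalg W)$. Next, $\varphi(e_W)\sim e_V$: $e_W=\calS(W)(g_\emptyset)$, so $\varphi(e_W)=\calS\bigl(W\cup(W^\op\amalg V)\bigr)(g_\emptyset) = \calS(dW\amalg V)(g_\emptyset)$, and $dW$ is $\theta$-nullcobordant by \pref{Proposition}{prop:double-is-nullbordant}, so $\calS(dW\amalg V)=\calS(V)$ on the relevant component, giving $\varphi(e_W)\sim e_V$. Finally, the multiplicativity $\mu_V\circ(\varphi,\varphi) = \varphi\circ\mu_W$: both sides are $\calS$ of explicit cobordisms $M\amalg M\leadsto N$, namely $X_W$ followed by $W^\op\amalg V$ on the one hand, and $((W^\op\amalg V)\amalg(W^\op\amalg V))$ followed by $X_V$ on the other. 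Using that doubles of $W$ and $V$ are nullcobordant (so the $dW$- and $dV$-pieces can be deleted), both cobordisms are seen to be $\theta$-cobordant rel boundary to $V^\op\amalg V^\op\amalg W\amalg W\amalg W^\op\colon M\amalg M\leadsto N$; this is a short graphical-calculus computation entirely analogous to the proof of \pref{Theorem}{thm:main}, and since the outgoing boundary $N$ is connected the remark's warning does not apply.

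\emph{The non-triviality when $V=W\amalg B$.} Now take $M=N$ simply connected $\Spin$, $d-1\ge 6$, and $V=W\amalg B$ with $B$ a closed $\Spin$-manifold with $\alpha(B)\ne 0$ in $KO_d(\pt)$. Here $\theta$ is $B\Spin(d+1)$ (the simply connected $\Spin$ tangential $2$-type), which is connected, so there is no component-bookkeeping issue. I would show $\varphi = \calS(W^\op\amalg(W\amalg B))$ moves every path component by exhibiting an invariant that detects the difference. The natural tool is the index-difference / the $\alpha$-invariant of the manifold $T_f$-type construction: for a psc metric $g$ on $M$, the metric $\varphi(g)$ is obtained by the surgery map along $W^\op\amalg W\amalg B$, and $W^\op\amalg W\sim M\times I$, so up to the surgery-map identification $\varphi(g)$ is the result of the ``$B$-stabilization'' operation $\calS(M\times I\amalg B)$. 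The key point is that $\calS(M\times I\amalg B)$ differs from the identity on $\pi_0\calR^+(M)$: this is precisely the additivity of the index difference under disjoint union together with $\alpha(B)\ne 0$. Concretely, one uses that for any $g_0,g_1$ the relative index $\inddiff(g_0,g_1)$ is unchanged under $\calS$ of a cylinder but that $\calS(M\times I\amalg B)$ shifts the ``absolute'' $\alpha$-class by $\alpha(B)\ne 0$ — more carefully, one picks a reference metric and observes that $g$ and $\varphi(g)$ lie in different components because the Gromov–Lawson–Rosenberg-type obstruction (the image under the index map, or the $\alpha$-invariant of the associated closed manifold obtained by capping off) differs by $\alpha(B)$. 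Hence $\varphi$ fixes no path component, so in particular $\varphi\not\simeq\id$.

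\emph{Main obstacle.} The routine part is the graphical calculus for the $H$-space equivalence — it is a direct analogue of \pref{Theorem}{thm:main}. The genuinely delicate step is the non-triviality claim: I need that $\calS(M\times I\amalg B)$ acts non-trivially on $\pi_0\calR^+(M)$, and this requires knowing how the surgery map $\calS$ interacts with the index difference / $\alpha$-invariant. The cleanest route is probably to invoke the compatibility of $\calS$ with the index-theoretic invariants established in the source papers (\cite{actionofmcg}, \cite{ownthesis}, and the Ebert–Randal-Williams machinery), reducing the statement to the standard fact that disjoint union with a closed $\Spin$-manifold $B$ of non-zero $\alpha$-invariant changes the component of a psc metric — i.e. to the injectivity of the relevant piece of the secondary index map, which is exactly where the hypothesis $\alpha(B)\ne 0$ and $d-1\ge 6$ (so that such $B$ exists and the surgery map is defined for $d\ge 7$) enter.
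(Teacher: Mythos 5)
Your proposal is correct and follows essentially the same route as the paper: the $H$-space equivalence is established by the graphical-calculus/cobordism-relation computations (the paper carries these out in Figures \ref{fig:homomorphism} and \ref{fig:unit-computation}), and the non-triviality claim is reduced, via \pref{Proposition}{prop:double-is-nullbordant}, to the statement that $\calS(M\times I\amalg B)$ moves every path component when $\alpha(B)\neq 0$, which the paper obtains by citing {\cite[Proposition 3.35]{actionofmcg}} --- precisely the ``compatibility of $\calS$ with the index-theoretic invariants'' that you correctly flag as the delicate external input. (One small notational slip: $(W^\op\amalg V)\cup_N(V^\op\amalg W)$ is $W^\op\amalg(V\cup V^\op)\amalg W$ with disjoint unions for the $W$-pieces, not $W^\op\cup\cdots\cup W$, but this does not affect the argument.)
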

\begin{proof}	
	An inverse is given by $\calS_{V^{\op}\amalg W}$, so $\varphi$ is a homotopy equivalence. We have $\varphi\circ \mu_W = \mu_V\circ(\varphi,\varphi)$ because of \pref{Figure}{fig:homomorphism} and $e_V = \varphi(e_W)$ because of \pref{Figure}{fig:unit-computation}.
	\begin{figure}[ht]
	\centering
	\includegraphics[width=0.6\textwidth]{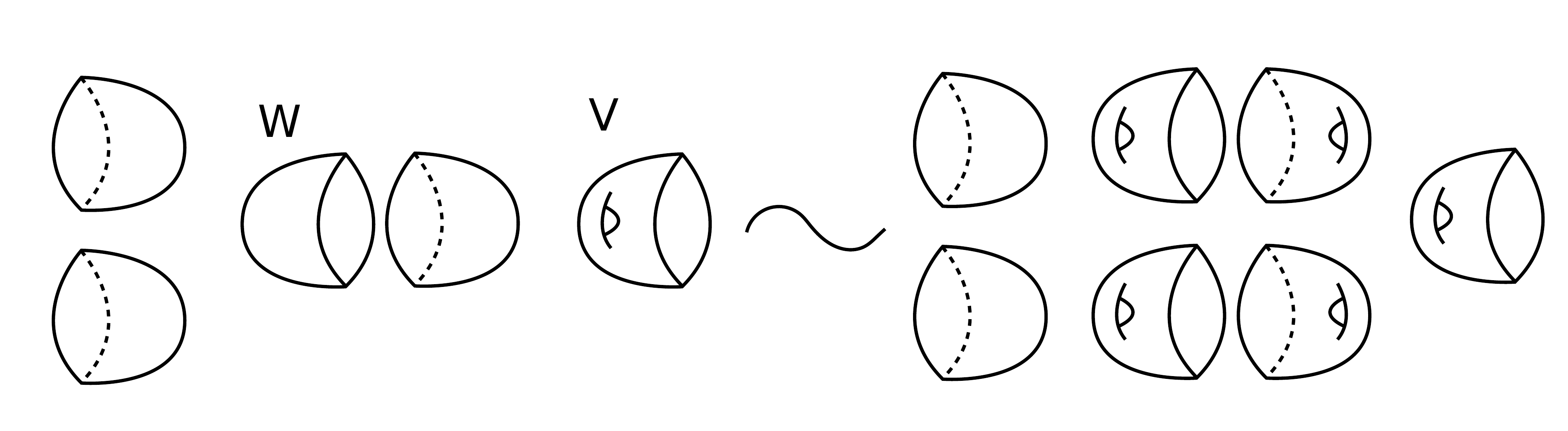}
	\caption{$\varphi\circ \mu_W = \mu_V\circ(\varphi,\varphi)$.}\label{fig:homomorphism}
	\end{figure}
	\begin{figure}[ht]
	\centering
	\includegraphics[width=0.3\textwidth]{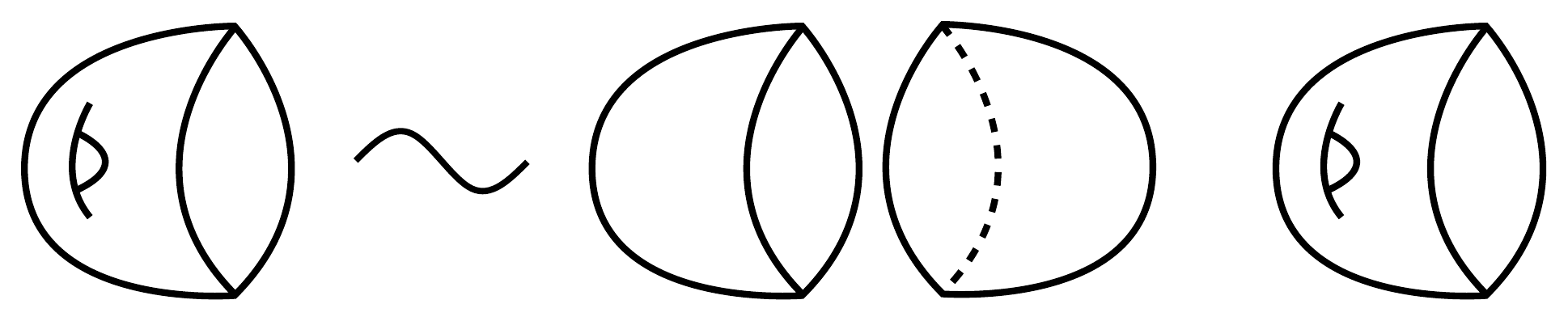}
	\caption{$e_V = \varphi(e_W)$.}\label{fig:unit-computation}
	\end{figure}
	
	\noindent The final part follows from \pref{Proposition}{prop:double-is-nullbordant} and \cite[Proposition 3.35]{actionofmcg}\footnote{see also \cite[Proposition 4.4.3]{ownthesis}}.
\end{proof} 

\noindent Even though $\mu_W$ and $\mu_V$ might be different maps, the path components of invertible elements are the same. Let $G_W$ denote the components of invertible elements with respect to $\mu_W$.

\begin{prop}\label{prop:units}
	Let $V,W\colon\emptyset\leadsto M$ be two $\theta$-nullcobordisms. Then $G_W=\varphi(G_W)=G_V$.
\end{prop}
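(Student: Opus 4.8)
The plan is to handle the two equalities separately: $\varphi(G_W)=G_V$ is formal, while $G_W=\varphi(G_W)$ requires an explicit description of the map $\varphi$. For the first equality, note that by \pref{Lemma}{lem:equivalence-of-hspaces} the map $\varphi=\calS(W^\op\amalg V)$ is an equivalence of $H$-spaces $(\calR^+(M),\mu_W,e_W)\to(\calR^+(M),\mu_V,e_V)$, so on path components it induces an isomorphism of abelian monoids $(\pi_0\calR^+(M),\mu_W)\to(\pi_0\calR^+(M),\mu_V)$. Since $G_W$ and $G_V$ are by definition the groups of units of these two monoid structures on the set $\pi_0\calR^+(M)$, and any monoid isomorphism restricts to an isomorphism of unit groups, we get $\varphi(G_W)=G_V$ with no further work.

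The substantive step is to show that $\varphi$ is homotopic to the self-map $g\mapsto\mu_W(g,e_V)$ of $\calR^+(M)$; I would carry this out by graphical calculus, in the spirit of the proof of \pref{Theorem}{thm:main}. Writing $e_V=\calS(V)(g_\emptyset)$ and using $\calS(M\times I)=\id$, one has
\[\mu_W(\,\cdot\,,e_V)=\calS(X_W)\circ\calS\bigl((M\times I)\amalg V\bigr)=\calS\bigl(((M\times I)\amalg V)\cup X_W\bigr).\]
Gluing $(M\times I)\amalg V\colon M\leadsto M\amalg M$ onto the incoming boundary of $X_W=W^\op\amalg W^\op\amalg W$ produces the cobordism $W^\op\amalg(V\cup_M W^\op)\amalg W\colon M\leadsto M$, with incoming boundary on the first $W^\op$, outgoing boundary on $W$, and the closed manifold $V\cup_M W^\op$ as a separate component. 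By \pref{Proposition}{prop:double-is-nullbordant}, $W^\op\amalg W$ is $\theta$-cobordant to $M\times[0,1]$ relative to $M\times\{0,1\}$; gluing this cobordism onto $V$ along the incoming copy of $M$ shows $(V\cup_M W^\op)\amalg W=V\cup_M(W^\op\amalg W)\sim V$ in $\Omega_d^\theta(\emptyset,M)$, and taking disjoint union with the fixed $W^\op\colon M\leadsto\emptyset$ gives $W^\op\amalg(V\cup_M W^\op)\amalg W\sim W^\op\amalg V$ in $\Omega_d^\theta(M,M)$. Applying $\calS$ yields $\mu_W(\,\cdot\,,e_V)=\calS(W^\op\amalg V)=\varphi$. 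I expect this cobordism bookkeeping — keeping straight which boundary components are incoming versus outgoing, and which $\theta$-structures are used — to be the only real obstacle; as the Remark following \pref{Theorem}{thm:main} points out, it is benign here since every boundary in sight is connected.

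To conclude: since $\varphi$ is a homotopy equivalence, $\pi_0(\varphi)\colon\pi_0\calR^+(M)\to\pi_0\calR^+(M)$ is a bijection, and by the previous step it is multiplication by $[e_V]$ in the abelian monoid $(\pi_0\calR^+(M),\mu_W)$. A bijective translation in an abelian monoid forces its multiplier to be a unit — surjectivity supplies $a_0$ with $\mu_W(a_0,[e_V])=[e_W]$, which is then a two-sided inverse by commutativity — so $[e_V]\in G_W$. As $G_W$ is a group, right translation by the element $[e_V]\in G_W$ maps $G_W$ bijectively onto itself, i.e. $\varphi(G_W)=G_W$. Combining this with $\varphi(G_W)=G_V$ from the first paragraph gives $G_W=\varphi(G_W)=G_V$.
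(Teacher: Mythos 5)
Your proof is correct, but it takes a different route from the paper's. The paper deduces \pref{Proposition}{prop:units} from \pref{Lemma}{lem:units}, which is a general commutation statement: for \emph{any} $\theta$-cobordism $U\colon M\leadsto M$ one has $\mu_W\circ(\calS(U),\id)=\mu_W\circ(\id,\calS(U))=\calS(U)\circ\mu_W$ (proved by the graphical computation of \pref{Figure}{fig:units}), and then $\calS(U)(G_W)=G_W$ is extracted by directly exhibiting an inverse of $\calS(U)(g)$, namely $\calS(U^\op)(g^{-1})$, using the commutation identity together with $U^\op\cup U\sim M\times I$. You instead specialize to $U=W^\op\amalg V$ and compute $\varphi=\mu_W(\,\cdot\,,e_V)$ by a single graphical-calculus manipulation — essentially the same kind of move as in \pref{Figure}{fig:unital}, just with $V$ in place of $W$ — and then close the argument with elementary monoid theory (a bijective translation in a commutative monoid has a unit as its multiplier, and translation by a unit preserves the unit group). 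Your approach is arguably more economical for this particular proposition, since the monoid-theoretic step replaces the explicit construction of inverses; the paper's route has the advantage of isolating the reusable commutation identities of \pref{Lemma}{lem:units} as a standalone statement. Both approaches handle $\varphi(G_W)=G_V$ the same way, as a formal consequence of $\varphi$ being an $H$-space equivalence (\pref{Lemma}{lem:equivalence-of-hspaces}), and your cobordism bookkeeping — in particular noting that the two copies of $M$ in the boundary of $W^\op\amalg V\colon M\leadsto M$ cause no $\theta$-structure ambiguity since both the incoming and outgoing boundaries are connected — is accurate.
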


\noindent This follows from the following, more general lemma.
 
\begin{lem}\label{lem:units}
	Let $U\colon M\leadsto M$ be a $\theta$-cobordism. Then
	\[\mu_W\circ (\calS(U),\id) = \mu_W\circ(\id,\calS(U))=\calS(U)\circ\mu_W\]
	and in particular $\calS(U)(G_W)= G_W$.
\end{lem}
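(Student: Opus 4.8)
The plan is to prove all three equalities by the same graphical-calculus strategy used throughout this section: each map in sight is of the form $\calS(\text{some }\theta\text{-cobordism})$, the surgery map is compatible with composition and disjoint union, and so it suffices to exhibit, for each asserted equality, a relative $\theta$-cobordism (rel boundary) between the two composite cobordisms. Crucially, in all the cobordisms appearing here the outgoing boundary is the connected manifold $M$, so the caveat from the Remark after Theorem \ref{thm:main} does not apply and the pictures faithfully record the $\theta$-structures.

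First I would handle $\mu_W\circ(\calS(U),\id) = \mu_W\circ(\id,\calS(U))$. We have $\mu_W\circ(\calS(U),\id) = \calS\bigl((U\amalg M\times I)\cup X_W\bigr)$ and $\mu_W\circ(\id,\calS(U)) = \calS\bigl((M\times I\amalg U)\cup X_W\bigr)$, where $X_W = W^\op\amalg W^\op\amalg W$. Both underlying cobordisms are obtained from $X_W$ by inserting the collar cobordism $U$ on one of the two incoming $M$-ends; the desired relative cobordism between them is the "sliding" cobordism that moves $U$ from the first incoming leg to the second, which exists because the two incoming legs of $X_W$ are symmetric (they are both copies of $W^\op$ with the same $\theta$-structure) — this is exactly the symmetry already exploited in Figure \ref{fig:commutative} for commutativity of $\mu_W$. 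Concretely, one can compose with the swap cobordism of Figure \ref{fig:commutative} on the two incoming ends and use that it is $\theta$-cobordant rel boundary to a cobordism carrying $U$ across.

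Next I would prove $\mu_W\circ(\id,\calS(U)) = \calS(U)\circ\mu_W$. Here $\calS(U)\circ\mu_W = \calS\bigl(X_W\cup U\bigr)$, i.e. glue $U$ onto the single \emph{outgoing} $M$-end of $X_W$, whereas $\mu_W\circ(\id,\calS(U))$ glues $U$ onto one \emph{incoming} $M$-end. The cobordism $X_W = W^\op\amalg W^\op\amalg W$ contains a copy of $W$ running to the outgoing boundary and a copy of $W^\op$ running to each incoming boundary; using Proposition \ref{prop:double-is-nullbordant} (which gives $W\cup W^\op\sim M\times I$ and hence lets us cancel an adjacent $W^\op, W$ pair, and dually create one) one can push $U$ through the $W$–$W^\op$ "tube" from an incoming leg to the outgoing leg. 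I expect this to be the step requiring the most care: one must check that the $\theta$-structure on $U$ is transported correctly through the identification $W\cup W^\op\sim M\times I$, i.e. that the relative cobordism provided by Proposition \ref{prop:double-is-nullbordant} is natural enough to carry along the insertion of $U$; since that proposition is stated rel $M\times\{0,1\}$, gluing in $U\times I$ along $M\times\{0\}$ (or $\{1\}$) produces the required relative $\theta$-cobordism, so this is really a matter of bookkeeping rather than a new idea.

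Finally, the statement $\calS(U)(G_W) = G_W$ is a formal consequence: if $g\in\calR^+(M)$ is $\mu_W$-invertible with inverse $h$, then applying $\calS(U)$ to the identity $\mu_W(g,h)\sim e_W$ and using $\calS(U)\circ\mu_W = \mu_W\circ(\id,\calS(U)) = \mu_W\circ(\calS(U),\id)$ shows that $\calS(U)(g)$ has $\calS(U)(h)$ as a $\mu_W$-inverse, and conversely $\calS(U^\op)$ (which exists as a $\theta$-cobordism by Proposition \ref{prop:double-is-nullbordant}) gives a two-sided homotopy inverse to $\calS(U)$ on components, so $\calS(U)$ restricts to a bijection on $\pi_0$ preserving $G_W$. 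Proposition \ref{prop:units} then follows by taking $U = V^\op\amalg W$ restricted appropriately — more precisely, $\varphi = \calS(W^\op\amalg V)$ intertwines $\mu_W$ and $\mu_V$ by Lemma \ref{lem:equivalence-of-hspaces}, hence carries $G_W$ onto $G_V$, while applying Lemma \ref{lem:units} with a suitable self-cobordism $U$ identifies $\varphi(G_W)$ with $G_W$; combining the two gives $G_W = \varphi(G_W) = G_V$.
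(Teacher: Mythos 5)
Your plan for the three-term identity is sound and matches the paper's strategy: both sides of each equality are surgery maps of explicit $\theta$-cobordisms, the outgoing boundary is the connected manifold $M$ so the caveat from the Remark doesn't bite, and the key move is exactly the one you identify — use $W^\op\amalg W\sim M\times I$ (\pref{Proposition}{prop:double-is-nullbordant}) to create or cancel a $W^\op$--$W$ pair and slide the cobordism $U$ between the legs of $X_W$. This is precisely what the paper encodes in \pref{Figure}{fig:units}.

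However, your deduction of $\calS(U)(G_W)=G_W$ has a genuine gap. You claim that if $\mu_W(g,h)\sim e_W$, then $\calS(U)(h)$ is a $\mu_W$-inverse of $\calS(U)(g)$. This is false in general: applying the identity twice gives
\[\mu_W\bigl(\calS(U)(g),\calS(U)(h)\bigr)=\calS(U)\bigl(\mu_W(g,\calS(U)(h))\bigr)=\calS(U)\bigl(\calS(U)(\mu_W(g,h))\bigr)=\calS(U\cup U)(e_W),\]
which need not be $e_W$. The correct candidate inverse is $\calS(U^\op)(h)$: since $U^\op\cup U\sim M\times I$ (again \pref{Proposition}{prop:double-is-nullbordant}), one gets
\[\mu_W\bigl(\calS(U)(g),\calS(U^\op)(h)\bigr)=\mu_W\bigl(g,\calS(U)\calS(U^\op)(h)\bigr)=\mu_W(g,h)\sim e_W,\]
exactly as in the paper, which takes $g''\coloneqq\calS(U^\op)(g')$ and runs this computation. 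Your fallback — that $\calS(U^\op)$ is a two-sided $\pi_0$-inverse to $\calS(U)$, hence $\calS(U)$ is a bijection on $\pi_0$ "preserving $G_W$" — is a non sequitur: a bijection on components need not carry the $\mu_W$-invertible components to themselves without an argument like the one above. Once you prove $\calS(U)(G_W)\subset G_W$ with the corrected inverse, the reverse inclusion follows by the same argument applied to $U^\op$, recovering the paper's conclusion.
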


\begin{proof}
	Since $W^{\op}\amalg W$ is cobordant to $M\times I$, the formula follows from \pref{Figure}{fig:units}. For the second part let $g,g'\in G_W$ such that $\mu_W(g,g')\sim e_W$ and let $\calS(U^\op)(g')\sim g''\in \pi_0(\calR^+(M))$. Then $\mu_W(\calS(U)(g),g'') \sim \mu_W(g,\calS(U)(g'')) \sim \mu_W(g,g') \sim e$ and so we have $\calS(U)(G_W)\subset G_W$. The other inclusion follows by the same argument for $U^\op$.\qedhere
\end{proof}

\noindent Now, let $M$ be as before and let $N$ be a manifold with the same tangential $2$-type but not necessarily $\theta$-nullcobordant. We get a $\theta$-cobordism $Y_W\coloneqq W^\op\amalg N\times[0,1]\colon M\amalg N \leadsto N$ (see \pref{Figure}{fig:action}) and a surgery map
	\[\rho_W\coloneqq \calS(Y_W)\colon\calR^+(M)\times\calR^+(N)\too\calR^+(N).\]
	\vspace{-1.7em}
	\begin{figure}[ht]
	\centering
	\includegraphics[width=0.24\textwidth]{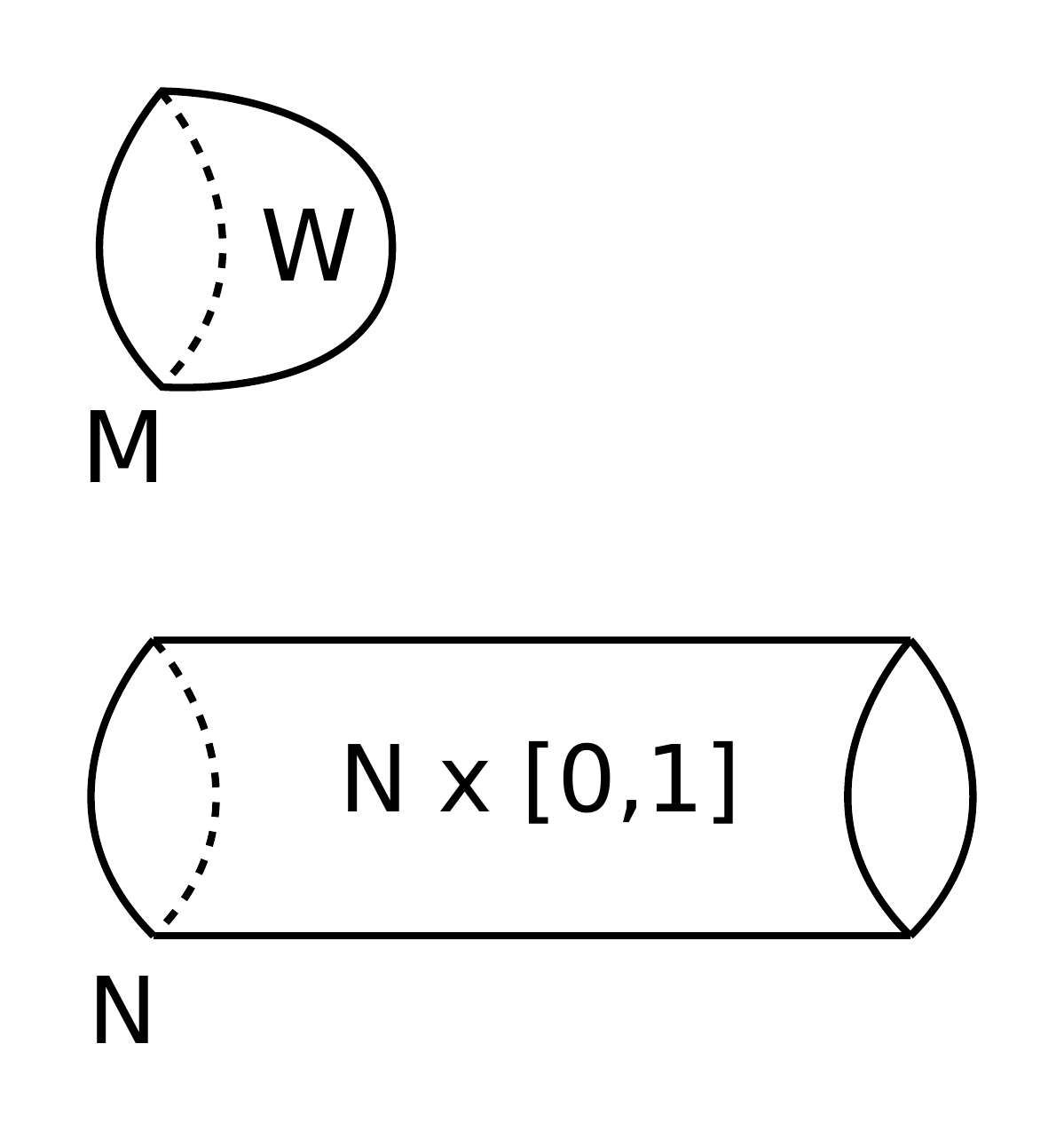}
	\vspace{-1em}
	\caption{The cobordism $Y_W\coloneqq W^\op\amalg N\times[0,1]\colon M\amalg N \leadsto N$.}\label{fig:action}
	\end{figure}
	\begin{figure}
		\centering
		\includegraphics[width=0.95\textwidth]{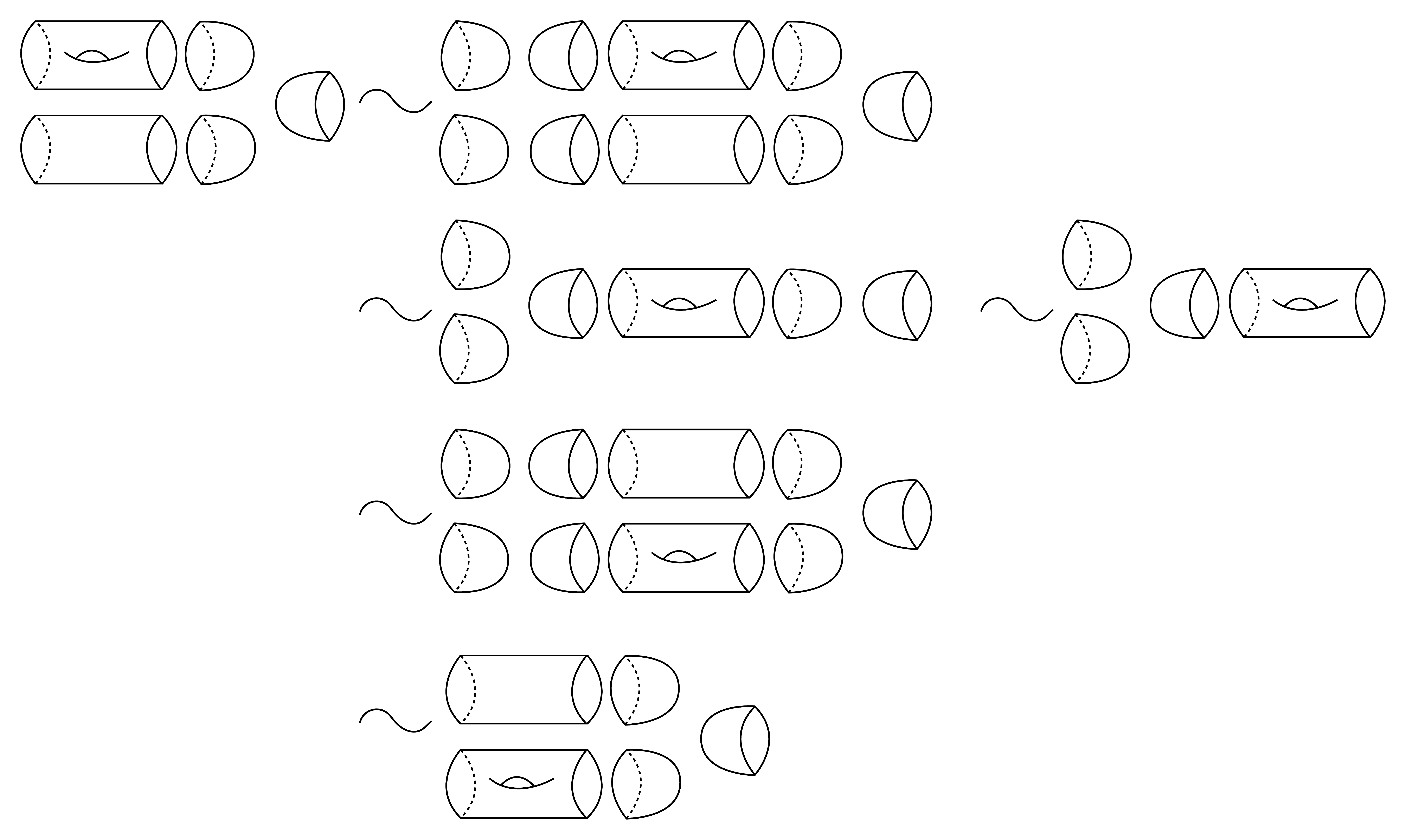}
		\caption{$\mu_W\circ (\calS(U),\id) =\calS(U)\circ\mu_W= \mu_W\circ(\id,\calS(U)).$}\label{fig:units}
	\end{figure}
	
	\begin{prop}\label{prop:action}
	$\rho_W$ defines an action of $\calR^+(M)$ on $\calR^+(N)$ in the homotopy category of spaces.
\end{prop}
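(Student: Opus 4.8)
The plan is to verify the two defining properties of an action in the homotopy category --- the unit law $\rho_W(e_W,\_)=\id_{\calR^+(N)}$ and the associativity law $\rho_W(\mu_W,\id)=\rho_W(\id,\rho_W)$ --- by the same graphical calculus used for \pref{Theorem}{thm:main}. Every map occurring is a surgery map $\calS$ of an explicit $\theta$-cobordism, so by the composition property $\calS(V\cup V')=\calS(V')\circ\calS(V)$ each side of each identity is $\calS$ of a composite cobordism, and it then suffices to identify these composites in the relevant cobordism set, using \pref{Proposition}{prop:double-is-nullbordant} to discard nullcobordant doubles.

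For the unit law I would write the map $g\mapsto(e_W,g)$ as $\calS(W\amalg N\times[0,1])\colon\calR^+(\emptyset\amalg N)=\calR^+(N)\to\calR^+(M)\times\calR^+(N)$, so that $\rho_W\circ(e_W,\id)=\calS\bigl((W\amalg N\times[0,1])\cup Y_W\bigr)$. Gluing along $M\amalg N$ fuses the two $N$-cylinders into $N\times[0,1]$ and joins $W$ to $W^\op$ along $M$, producing the closed double $dW$; the composite is therefore $dW\amalg N\times[0,1]\colon N\leadsto N$, which by \pref{Proposition}{prop:double-is-nullbordant} is $\theta$-cobordant rel boundary to $N\times[0,1]$, whose surgery map is the identity.

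For associativity I would compute both composites as surgery maps of $\theta$-cobordisms $M\amalg M\amalg N\leadsto N$. On the one hand $(\mu_W,\id)=\calS(X_W\amalg N\times[0,1])$, and gluing the outgoing $M$-end of $X_W=W^\op\amalg W^\op\amalg W$ to the $W^\op$ in $Y_W$ turns $W$ into $dW$, giving $\rho_W\circ(\mu_W,\id)=\calS\bigl((W^\op\amalg W^\op\amalg dW)\amalg N\times[0,1]\bigr)=\calS\bigl((W^\op\amalg W^\op)\amalg N\times[0,1]\bigr)$ after discarding $dW$. On the other hand $(\id,\rho_W)=\calS(M\times[0,1]\amalg Y_W)$, and gluing to $Y_W$ absorbs the $M$-cylinder into the $W^\op$ of the second $Y_W$ (not changing the cobordism class rel boundary) and fuses the two $N$-cylinders, leaving $\rho_W\circ(\id,\rho_W)=\calS\bigl((W^\op\amalg W^\op)\amalg N\times[0,1]\bigr)$ as well. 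Comparing the two pictures finishes the argument.

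The one point needing care --- and the thing I would flag explicitly --- is the $\theta$-structure bookkeeping warned about in the Remark after \pref{Theorem}{thm:main}: several intermediate cobordisms here, such as $W\amalg N\times[0,1]\colon N\leadsto M\amalg N$, have disconnected outgoing boundary, so one must keep track of which boundary component carries which restricted structure before identifying composites. Since $M$ and $N$ are connected with the same tangential $2$-type, the base $B$ of $\theta$ is connected and no genuinely new phenomenon appears; the final composites all have the connected outgoing boundary $N$, and the simplifications above are exactly the ones licensed by the composition property of $\calS$ already exploited in \pref{Theorem}{thm:main}. I therefore expect the only real work to be this routine, mildly delicate diagram chase; there is no substantive obstacle.
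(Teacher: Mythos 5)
Your proof is correct and matches the paper's approach exactly: the paper's own proof simply points to two figures encoding the same graphical-calculus computations you spell out in words --- the unit law via $\bigl(W\amalg N\times[0,1]\bigr)\cup Y_W\sim dW\amalg N\times[0,1]\sim N\times[0,1]$, and associativity by reducing both $\bigl(X_W\amalg N\times[0,1]\bigr)\cup Y_W$ and $\bigl(M\times[0,1]\amalg Y_W\bigr)\cup Y_W$ to $W^\op\amalg W^\op\amalg N\times[0,1]$ modulo \pref{Proposition}{prop:double-is-nullbordant}. Your remark about $\theta$-structure bookkeeping is appropriate in spirit, though the stated justification is slightly off: the danger in the paper's warning stems not from $B$ itself being disconnected (it is connected here since $M$ is) but from the base of the relevant tangential structure for a \emph{disconnected} outgoing boundary like $M\amalg N$ being a disjoint union of copies of $B$; what actually saves you is that your composites track the component pairing consistently throughout and the final cobordisms being compared both have connected outgoing boundary $N$, which is exactly how you conclude.
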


\begin{proof}
	We need to show that $\rho_W(e_W,\_)=\id$ and $\rho_W\circ(\id, \rho_W) = \rho_W\circ(\mu_W,\id)$ which follows from \pref{Figure}{fig:action-unital} and \pref{Figure}{fig:action-computation}.
	\begin{figure}[ht]
	\centering
	\includegraphics[width=0.6\textwidth]{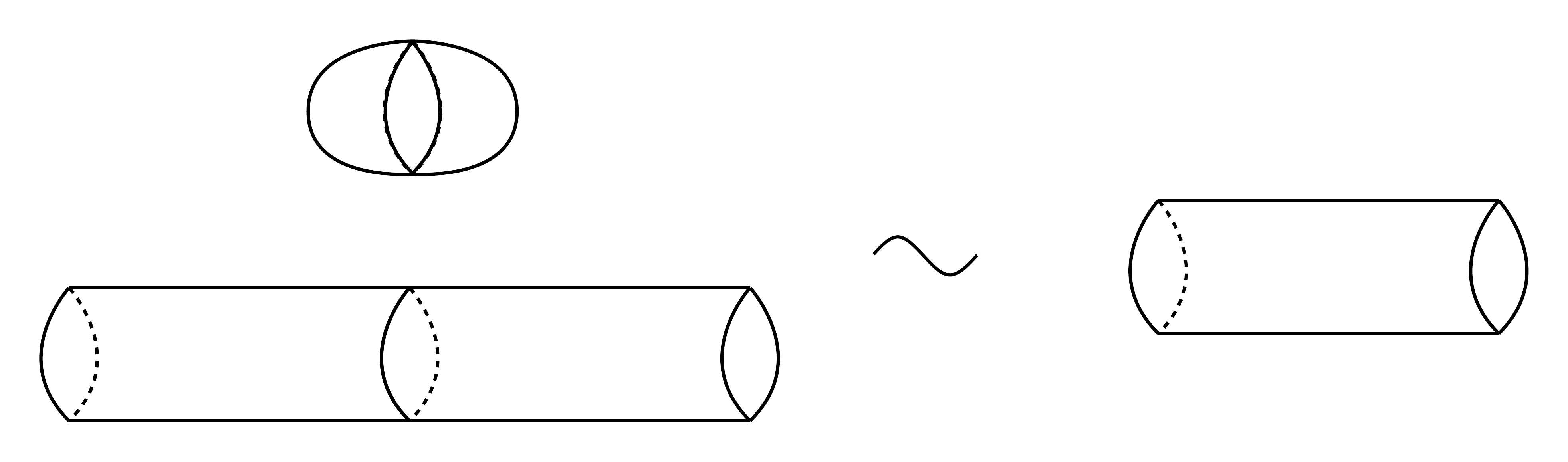}
	\caption{$\rho_W(e_W,\_)=\id$.}\label{fig:action-unital}
	\end{figure}
	\begin{figure}[ht]
	\centering
	\includegraphics[width=0.8\textwidth]{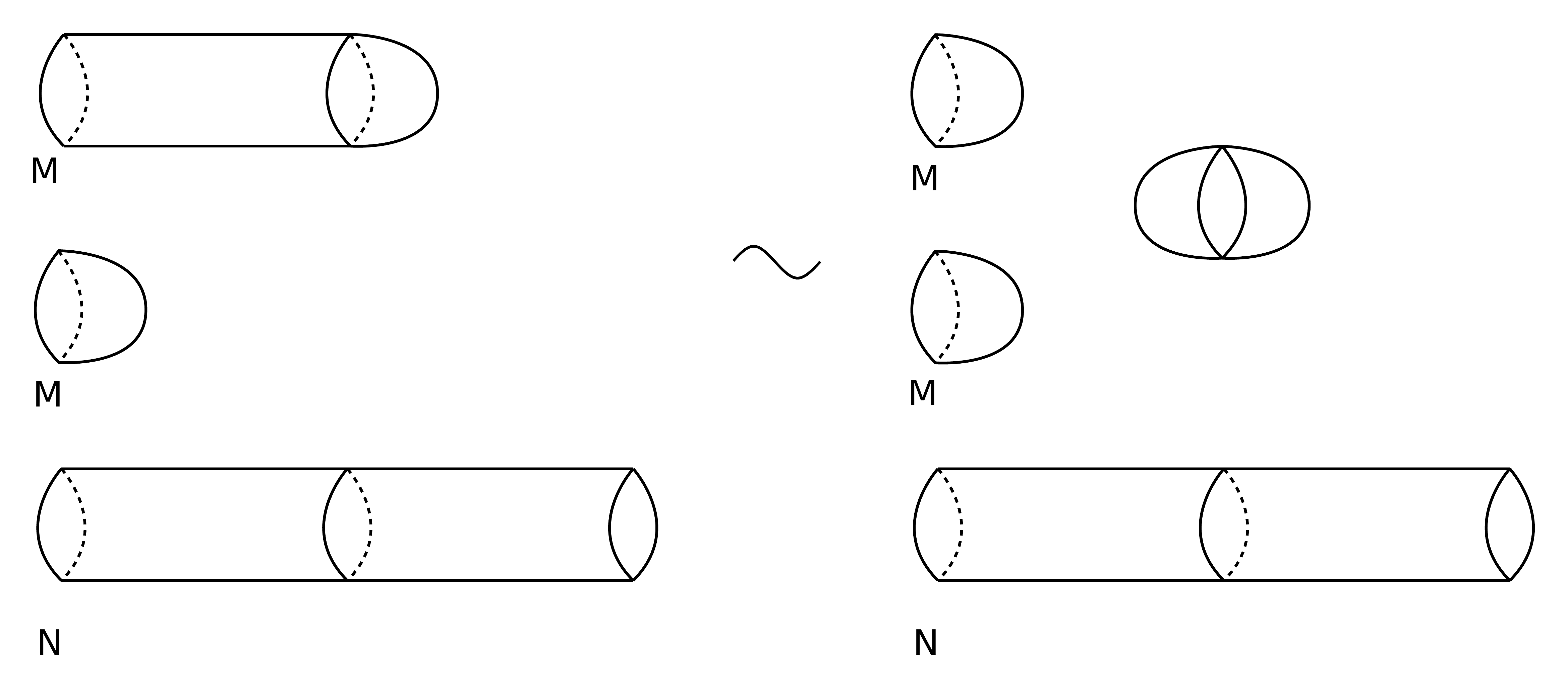}
	\caption{$\rho_W\circ(\id, \rho_W) = \rho_W\circ(\mu_W,\id)$.}\label{fig:action-computation}
	\end{figure}
\end{proof}

\noindent For the final result of this section recall that for a $\theta$-diffeomorphism\footnote{If $\theta\colon B\Spin(d+1)\to B\ort(d+1)$, a $\theta$-diffeomorphism is an orientation preserving diffeomorphism $f\colon N\congarrow N$ together with an isomorphism of $\Spin$-structures $f^*\hat l_N\cong \hat l_N$. For more on general $\theta$-diffeomorphisms see \cite[Section 3.3]{actionofmcg} or \cite[Section 1.2]{ownthesis}.} $f\colon (N,\hat l_N)\to(N,\hat l_N)$ the main result \cite[Theorem A resp. Corollary 3.32]{actionofmcg}\footnote{see also \cite[Corollary B]{ownthesis}} states that the pullback map $f^*\colon \calR^+(N)\to\calR^+(N)$ is homotopic to $\calS(N\times[0,1]\amalg T_f)$, where $T_f$ denotes the \emph{$\theta$-structured mapping torus}.

\begin{thm}\label{thm:diffeo-action}
	Let $f\colon N\to N$ be a $\theta$-diffeomorphism of $N$. If $\calS(M\times[0,1]\amalg T_f)(e_W)$ and $e_W$ lie in the same component of $\calR^+(M)$, then $f^*\colon\calR^+(N)\to\calR^+(N)$ is homotopic to the identity. If furthermore $N$ is $\theta$-nullcobordant, equivalence holds.
\end{thm}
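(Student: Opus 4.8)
The plan is to leverage the identification $f^* = \calS(N\times[0,1]\amalg T_f)$ from \cite{actionofmcg} together with the action $\rho_W$ and the graphical calculus. The key observation is that, writing $U \coloneqq N\times[0,1]\amalg T_f \colon N\leadsto N$, we have $f^* = \calS(U)$, and by \pref{Lemma}{lem:units} applied with this $U$ one gets $\rho_W\circ(\id,\calS(U)) = \calS(U)\circ\rho_W$; more precisely, since $\rho_W = \calS(Y_W)$ with $Y_W = W^\op\amalg N\times[0,1]$, composing cobordisms shows that $\calS(U)\circ\rho_W = \calS(W^\op\amalg U) = \rho_{W'}$ where $W' \coloneqq $ the nullcobordism obtained by... — but $M$ need not be changed here, so the cleaner statement is that $\calS(U)\circ\rho_W(g,\_) = \rho_W(\, \calS(\widetilde T_f)(g),\, \_\,)$ is not quite what we want. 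Instead I would argue directly: the cobordism $N\times[0,1]\amalg T_f$ from $N$ to $N$, precomposed with $Y_W\colon M\amalg N\leadsto N$, is $\theta$-cobordant (relative to the boundary) to the cobordism $(M\times[0,1]\amalg T_f) \cup Y_W$ — i.e. one can slide the mapping-torus piece $T_f$ across to act on the $M$-end — yielding the identity
\[
\calS(U)\circ\rho_W \;=\; \rho_W\circ\bigl(\calS(M\times[0,1]\amalg T_f),\,\id\bigr).
\]
This is a picture-level computation entirely analogous to \pref{Figure}{fig:units}, valid because the outgoing boundary $N$ is connected.

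First I would record the displayed identity above via graphical calculus. Second, I would observe that $\rho_W(e_W,\_) = \id$ by \pref{Proposition}{prop:action}. Now suppose $\calS(M\times[0,1]\amalg T_f)(e_W)$ lies in the same component of $\calR^+(M)$ as $e_W$. Evaluating the displayed identity with the $M$-slot fixed at (a metric in the component of) $e_W$ gives
\[
f^*\circ\rho_W(e_W,\_) \;=\; \calS(U)\circ\rho_W(e_W,\_) \;=\; \rho_W\bigl(\calS(M\times[0,1]\amalg T_f)(e_W),\,\_\,\bigr) \;=\; \rho_W(e_W,\_),
\]
where the last equality uses that $\rho_W$ only depends on the component of its first argument together with the hypothesis. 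Since $\rho_W(e_W,\_) = \id$, this reads $f^* = \id$ in the homotopy category, as desired.

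For the converse, assume $N$ is $\theta$-nullcobordant, say via $V\colon\emptyset\leadsto N$. Then $\calR^+(N)$ itself carries the $H$-space structure $(\mu_V,e_V)$, and by \pref{Lemma}{lem:equivalence-of-hspaces} the map $\psi \coloneqq \calS(W^\op\amalg V)\colon\calR^+(M)\to\calR^+(N)$ — wait, this goes the wrong way; rather, since both $M$ and $N$ are nullcobordant one has an $H$-space equivalence $\calR^+(M)\simeq\calR^+(N)$ carrying $e_W$ to $e_V$. Transporting along this equivalence, the condition "$\calS(M\times[0,1]\amalg T_f)(e_W)\sim e_W$" becomes "$\calS(N\times[0,1]\amalg T_f)(e_V)\sim e_V$" (using that the mapping-torus piece commutes with the equivalence cobordism, again a picture computation), i.e. $f^*(e_V)\sim e_V$. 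But $f^* = \calS(U)$ commutes with $\mu_V$ by \pref{Lemma}{lem:units}, so $f^*$ is determined on units and, being an $H$-map fixing the component of $e_V$ if and only if it fixes nothing/something, one concludes $f^*\sim\id$ iff $f^*(e_V)\sim e_V$ iff the hypothesis holds. Concretely: $f^*(g) = f^*\bigl(\mu_V(e_V,g)\bigr) = \mu_V\bigl(f^*(e_V),g\bigr)$, and if $f^*(e_V)\sim e_V$ this is $\mu_V(e_V,g)=g$.

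The main obstacle I anticipate is bookkeeping the $\theta$-structures across the cobordisms when sliding the mapping torus $T_f$ from the $N$-end to the $M$-end: one must check that the relevant relative $\theta$-cobordism exists and respects all boundary structures, in particular that the $\theta$-structure on $T_f$ is the one for which the identification $f^* = \calS(N\times[0,1]\amalg T_f)$ holds. This is where the connectedness of the outgoing boundary is essential (cf. the Remark following \pref{Theorem}{thm:main}), and it is the only genuinely non-formal point; the rest is a direct application of \pref{Lemma}{lem:units}, \pref{Proposition}{prop:action} and \pref{Lemma}{lem:equivalence-of-hspaces}.
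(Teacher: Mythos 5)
Your proof is correct and follows essentially the same route as the paper. The key graphical identity you write as $\calS(U)\circ\rho_W = \rho_W\circ(\calS(M\times[0,1]\amalg T_f),\id)$ is exactly the content of the paper's Figure~\ref{fig:diffeo-action} (there phrased as $\rho_W(\id,\calS(N\times[0,1]\amalg T_f)) = \rho_W(\calS(M\times[0,1]\amalg T_f),\id)$; unwinding the cobordism compositions shows both equalities name the same $\theta$-cobordism $W^\op\amalg N\times[0,1]\amalg T_f$), and evaluating the first slot at $e_W$ together with $\rho_W(e_W,\_)=\id$ gives the first half exactly as in the paper. For the converse, the paper plugs the factorization $\rho_W=\mu_V(\calS(W^\op\amalg V),\id)$ into the first-half expression and concludes directly, whereas you first transport the hypothesis along $\calS(W^\op\amalg V)$ to get $f^*(e_V)\sim e_V$ and then use \pref{Lemma}{lem:units} to see $f^*=\mu_V(f^*(e_V),\_)$. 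These are the same ingredients assembled in a slightly different order; your version requires the extra observation (which you correctly flag as a picture computation) that the mapping-torus cobordism commutes with $\calS(W^\op\amalg V)$, while the paper avoids it by never transporting. Both are fine. The false starts in your write-up (the aborted ``$\rho_{W'}$'' idea and the ``wait, this goes the wrong way'' about $\calS(W^\op\amalg V)$ — which in fact goes the right way) should simply be excised in a clean version.
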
 

\begin{remark}
	In particular this shows the following for $N=M$: If $f^*e_V$ and $e_V$ lie in the same path component of $\calR^+(N)$, then $f^*$ is homotopic to the identity.
\end{remark}

\begin{proof}[Proof of \pref{Theorem}{thm:diffeo-action}] 
	The first part is implied by 
	\begin{align*}
		f^* &= \calS(N\times[0,1]\amalg T_f) = \rho_W(e_W,\calS(N\times[0,1]\amalg T_f)) \\
		&= \rho_W(\calS(M\times[0,1]\amalg T_f)(e_W),\id)
	\end{align*}
	where the last equality follows from \pref{Figure}{fig:diffeo-action}. 
	\begin{figure}[ht]
	\centering
	\includegraphics[width=0.8\textwidth]{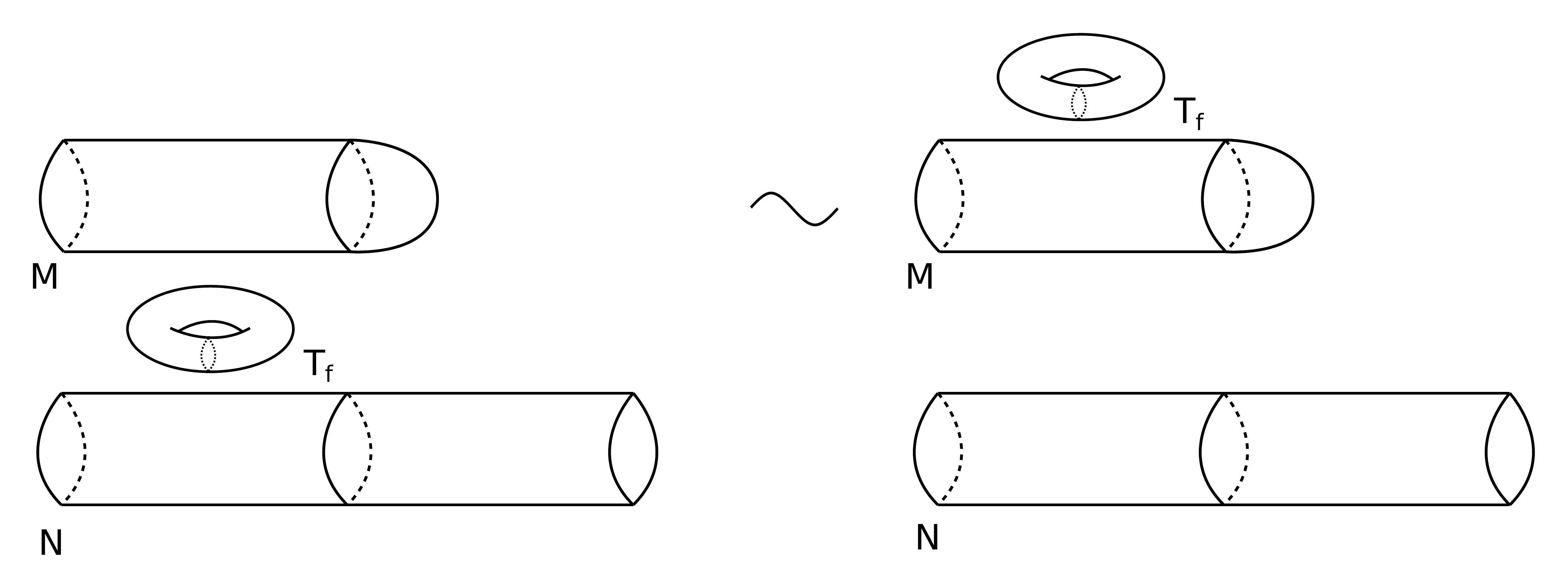}
	\caption{$\rho_W(\id,\calS(N\times[0,1]\amalg T_f)) = \rho_W(\calS(M\times[0,1]\amalg T_f),\id)$.}\label{fig:diffeo-action}
	\end{figure}\\
	If $N$ is $\theta$-nullcobordant as well, say via $V\colon\emptyset\leadsto N$, then $\rho_W = \mu_V(\calS(W^\op\amalg V),\id)$ (see \pref{Figure}{fig:action-nullbordant}) and we compute
	\[\rho_W(\calS(M\times[0,1]\amalg T_f)(e_W),\id) = \mu_V\Bigl(\calS(W^\op\amalg V)\bigl(\calS(M\times[0,1]\amalg T_f)(e_W)\bigr),\id\Bigr).\]
	\begin{figure}[ht]
	\centering
	\includegraphics[width=0.7\textwidth]{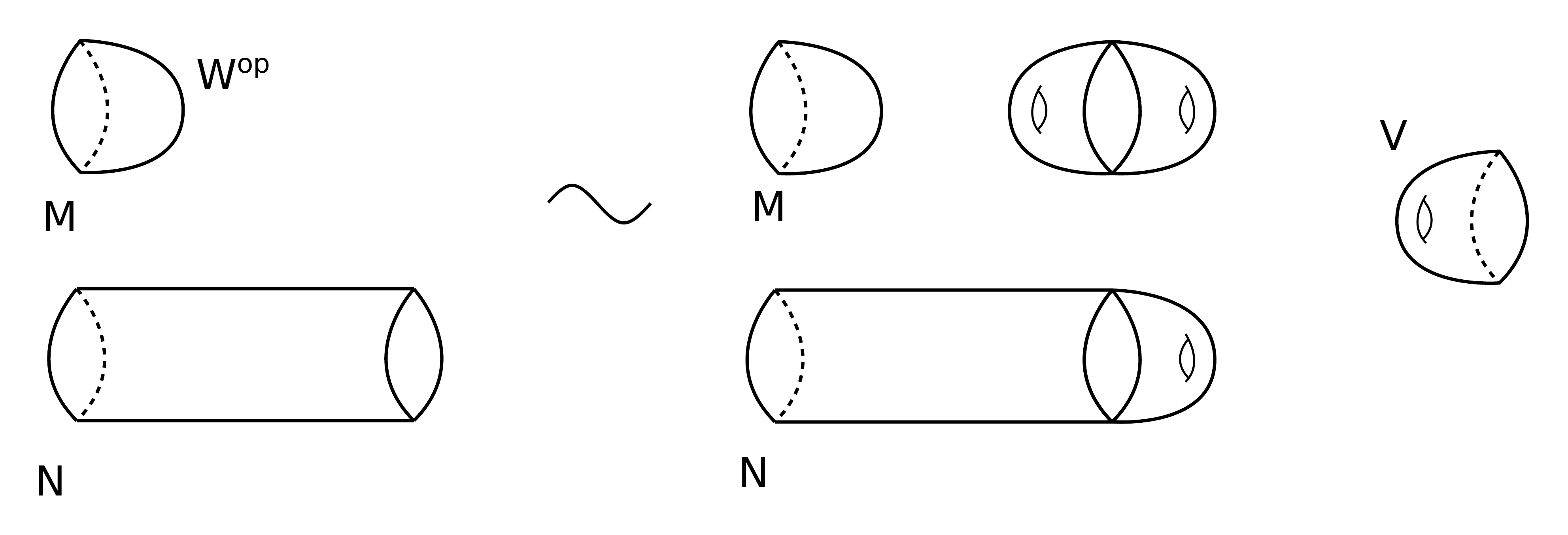}
	\caption{$\rho_W = \mu_V(\calS(W^\op\amalg V),\id)$.}\label{fig:action-nullbordant}
	\end{figure}\\
	This is homotopic to the identity if and only if $\calS(M\times[0,1]\amalg T_f)(e_W)\sim e_W$ since $\calS(W^\op\amalg V)$ is an equivalence of $H$-spaces.
\end{proof}

\noindent Since every orientation preserving diffeomorphism of a simply connected $\Spin$-manifold $N$ lifts to a $\Spin$-diffeomorphism, \pref{Theorem}{thm:diffeo-action-spin} follows immediately from \pref{Theorem}{thm:diffeo-action}.

\noindent As a corollary of the the computation in \pref{Figure}{fig:action-nullbordant} we get:

\begin{cor}
	If $N$ is $\theta$-nullcobordant, then the action of $\calR^+(M)$ on $\calR^+(N)$ is free in the sense that $\rho_W(g) = \id_{\calR^+(N)}$ if and only if $g\sim e_W$.
\end{cor}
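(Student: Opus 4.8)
The statement to prove is the Corollary: if $N$ is $\theta$-nullcobordant, say via $V\colon\emptyset\leadsto N$, then the homotopy action $\rho_W$ of $\calR^+(M)$ on $\calR^+(N)$ is free in the sense that $\rho_W(g,\_)=\id_{\calR^+(N)}$ if and only if $g\sim e_W$. The plan is to read off both implications from the identity $\rho_W = \mu_V\circ(\calS(W^\op\amalg V),\id)$ established in \pref{Figure}{fig:action-nullbordant}, together with the fact, from \pref{Lemma}{lem:equivalence-of-hspaces}, that $\varphi\coloneqq\calS(W^\op\amalg V)$ is an equivalence of $H$-spaces $(\calR^+(M),\mu_W,e_W)\to(\calR^+(N),\mu_V,e_V)$ with $\varphi(e_W)\sim e_V$.

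First I would dispose of the ``if'' direction: if $g\sim e_W$ then $\rho_W(g,\_) = \rho_W(e_W,\_) = \id$ by \pref{Proposition}{prop:action}, since a homotopy action sends the neutral element to the identity. For the ``only if'' direction, suppose $\rho_W(g,\_)\sim\id_{\calR^+(N)}$. Using the factorization $\rho_W = \mu_V\circ(\varphi,\id)$, this says $\mu_V(\varphi(g),\_)\sim\id_{\calR^+(N)}$, i.e. $\varphi(g)$ acts as a (two-sided, by homotopy-commutativity of $\mu_V$) unit for the $H$-space multiplication $\mu_V$. Now I would invoke the standard fact that in a homotopy-associative, homotopy-commutative $H$-space the neutral component is the unique component that acts as identity under left multiplication — concretely, if $\mu_V(h,\_)\sim\id$ then evaluating at $e_V$ gives $h\sim\mu_V(h,e_V)\sim e_V$ at the level of components. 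Hence $\varphi(g)\sim e_V\sim\varphi(e_W)$, and since $\varphi$ is a homotopy equivalence it is in particular a bijection on $\pi_0$, so $g\sim e_W$.

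The only genuine subtlety — and the step I expect to need the most care — is justifying that $\mu_V(h,\_)\sim\id_{\calR^+(N)}$ forces $h\sim e_V$ in $\pi_0$. This is exactly the assertion that the monoid $\pi_0(\calR^+(N))$ (with the operation induced by $\mu_V$, which is a well-defined abelian monoid by the Corollary following \pref{Theorem}{thm:main}) has only the class $[e_V]$ acting invertibly-from-the-left-as-identity; but that is immediate because if multiplication by $[h]$ is the identity map on the set $\pi_0(\calR^+(N))$, then $[h]=[h]\cdot[e_V]=[e_V]$. So really no $H$-space machinery beyond $\pi_0$ is needed; one just has to be slightly careful that ``$\rho_W(g,\_)\sim\id$'' is read as an equality in the homotopy category, which by the discussion preceding \pref{Definition}{def:structured-bordism-set}'s $H$-space conventions is the same as a homotopy of maps, hence in particular induces the identity on $\pi_0$. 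Assembling these observations gives the Corollary with no further computation.
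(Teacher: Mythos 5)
Your proposal is correct and follows essentially the same route as the paper: both use the factorization $\rho_W = \mu_V\circ(\calS(W^\op\amalg V),\id)$ from \pref{Figure}{fig:action-nullbordant} and the fact that $\calS(W^\op\amalg V)$ is an $H$-space equivalence from \pref{Lemma}{lem:equivalence-of-hspaces}. The only difference is that you spell out the intermediate step (that $\mu_V(h,\_)\sim\id$ forces $h\sim e_V$ by evaluating at $e_V$) which the paper leaves implicit in its displayed chain of equivalences.
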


\begin{proof}
	Let again $V$ be a $\theta$-nullcobordism of $N$. By \pref{Figure}{fig:action-nullbordant} we have $\rho_W(g) = \mu_V(\calS(W^\op\amalg V)(g),\id)$. It follows that 
	\[\rho_W(g) =\id \quad\iff \quad \calS(W^\op\amalg V)(g)\sim e_V \quad\iff \quad g = e_W,\]
	where the last equivalence holds because $\calS(W^\op\amalg V)$ is an equivalence of $H$-spaces.
\end{proof}

\begin{remark}
The computations from this section rely on the maps being given geometrically via cobordisms. This is reminiscent of quantum field theories which are functors from cobordism categories. Note however, that we also make frequent use of the cobordism relation which allows us to introduce and cancel doubles.
\end{remark}

\section{Comparison to other multiplicative structures on \texorpdfstring{$\calR^+(M)$}{psc-metrics}}\label{sec:comparison}

\subsection{Walsh's construction}
	Let us start by recalling the construction from \cite{walsh_hspaces}. Let $d-1\ge4$ and let $\varphi_i\colon D^{d-1}\embeds S^{d-1}$ be disjoint embeddings for $i=1,2,3$. Let $g_\tor$ be a \emph{torpedo metric} on $D^{d-1}$, i.e. a rotationally symmetric metric of positive scalar curvature that restricts to the cylinder over the round metric near the boundary (cf. \cite[Definition 2.9]{ebertfrenck} for a more precise definition). By the parametrized version of the Gromov--Lawson--Schoen--Yau surgery theorem (cf. \cite{chernysh}, see also \cite{ebertfrenck}) there exists a metric $u'$ on $S^{d-1}\setminus (\im\varphi_1\amalg \varphi_2\amalg\varphi_3)$ such that $u \coloneqq u'\cup(\varphi_1)_*g_\tor\cup(\varphi_2)_*g_\tor\cup(\varphi_3)_*g_\tor\in\calR^+(S^{d-1}, \varphi_1\amalg\varphi_2\amalg\varphi_3)$\footnote{For an embedding $\varphi\colon D^{d-1}\to S^{d-1}$ the space $\calR^+(S^{d-1}, \varphi)$ is defined to be the subspace of those metrics, which have restrict to $\varphi_*g_\tor$ on the image of $\varphi$. If there are several disjoint such embeddings $\varphi_1\amalg\dots\amalg \varphi_n$ the analogous space is denoted by $\calR^+(S^{d-1}, \varphi_1\amalg\dots\amalg\varphi_n)$} lies in the component of the round metric in $\calR^+(S^{d-1})$.\ For clarity let us from now on index the spheres: $S^{d-1}_0$ and $S^{d-1}_1$ will denote the spheres on which we multiply and $S^{d-1}_2$ is the remaining \enquote{reference} sphere. A multiplication map 
	\[\mu^\tor\colon \calR^+(S_0^{d-1},\varphi_1)\times \calR^+(S_1^{d-1},\varphi_1)  \to \calR^+(S^{d-1},\varphi_1)\]
	is given as follows: For $i=0,1$, let $g_i\in\calR^+(S_i^{d-1},\varphi_1)$, say $g_i = g_i'\cup(\varphi_1)_*g_\tor$. We define $\mu^\tor(g_0,g_1)\coloneqq f^*\bigr(g_0'\cup u'\cup g_1\cup (\varphi_1)_*(g_\tor)\bigr)$ for a fixed diffeomorphism 
	\[f\colon S^{d-1}\congarrow \Bigl((S_0^{d-1}\setminus\im\varphi_1)\amalg \bigl(S_2^{d-1}\setminus(\im\varphi_2\cup\im\varphi_3)\bigr)\amalg (S_1^{d-1}\setminus\im\varphi_1)\Bigr)/\sim\]
	The relation identifies $\partial(\im\varphi_1)$ from $S^{d-1}_0$ with $\partial(\im\varphi_2)$ from $S^{d-1}_2$ and $\partial(\im\varphi_1)$ from $S^{d-1}_1$ with $\partial(\im\varphi_3)$ from $S^{d-1}_2$ (see \pref{Figure}{fig:tor-multiplication}). Furthermore we may choose $f$ so that $f\circ\varphi_1 = \varphi_1$ and $\varphi_1$ for $\varphi_1\colon D^{d-1}\embeds S^{d-1}_2$.
	\begin{figure}[ht]
		\includegraphics[width=0.7\textwidth]{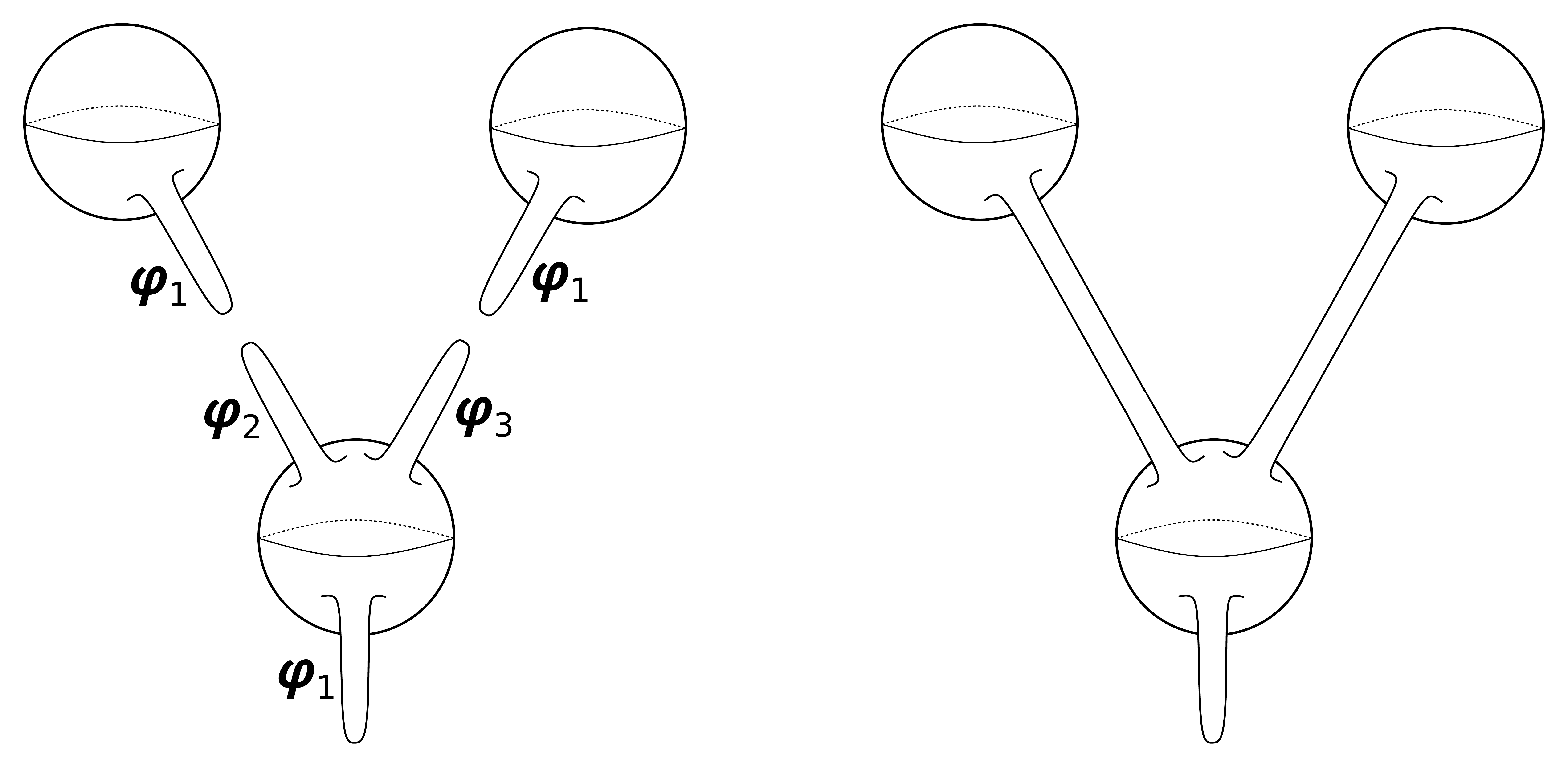}
		\caption{The multiplication $\mu^\tor$.}\label{fig:tor-multiplication}
	\end{figure}
	
	The obtained metric $\mu^\tor(g_0,g_1)$ restricts to $(\varphi_1)_*g_\tor$ on $\im\varphi$ and hence lies in $\calR^+(S^{d-1},\varphi_1)$. Since the inclusion $\calR^+(S^{d-1},\varphi_1)\embeds\calR^+(S^{d-1})$ is a weak equivalence, this defines an $H$-space multiplication $\mu_\tor$ with neutral element given by the round metric on $\calR^+(S^{d-1})$ (cf. \cite[Theorem 5.1]{walsh_hspaces}). It turns out that the component of the round metric $g_\circ$ on $S^{d-1}$ is a $(d-1)$-fold loop space (cf. \cite[Theorem 9.6]{walsh_hspaces}).


	Now let $\varphi_{12}\colon S^0\times D^{d-1}\embeds S^{d-1}_0\amalg S^{d-1}_2$ and $\varphi_{13}\colon S^0\times D^{d-1}\embeds S^{d-1}_1\amalg S^{d-1}_2$ be the disjoint union of $\varphi_1$ with $\varphi_2$ or $\varphi_3$ respectively. Since $u= g_\circ = e_D$, the map $\mu^\tor$ is given by the surgery map for the cobordism (see \pref{Figure}{fig:torpedo-cobordism} for a visualization)
	\[W = \Bigl((S^{d-1}_0\amalg S^{d-1}_1)\times[0,1]\amalg D^d\Bigr)\cup\Bigl(\tr(\varphi_{12})\amalg S^{d-1}_1\times[0,1]\Bigr)\cup\tr(\varphi_{13})\]
	where $\tr$ denotes the trace of a surgery.
	\begin{figure}[ht]
		\includegraphics[width=0.65\textwidth]{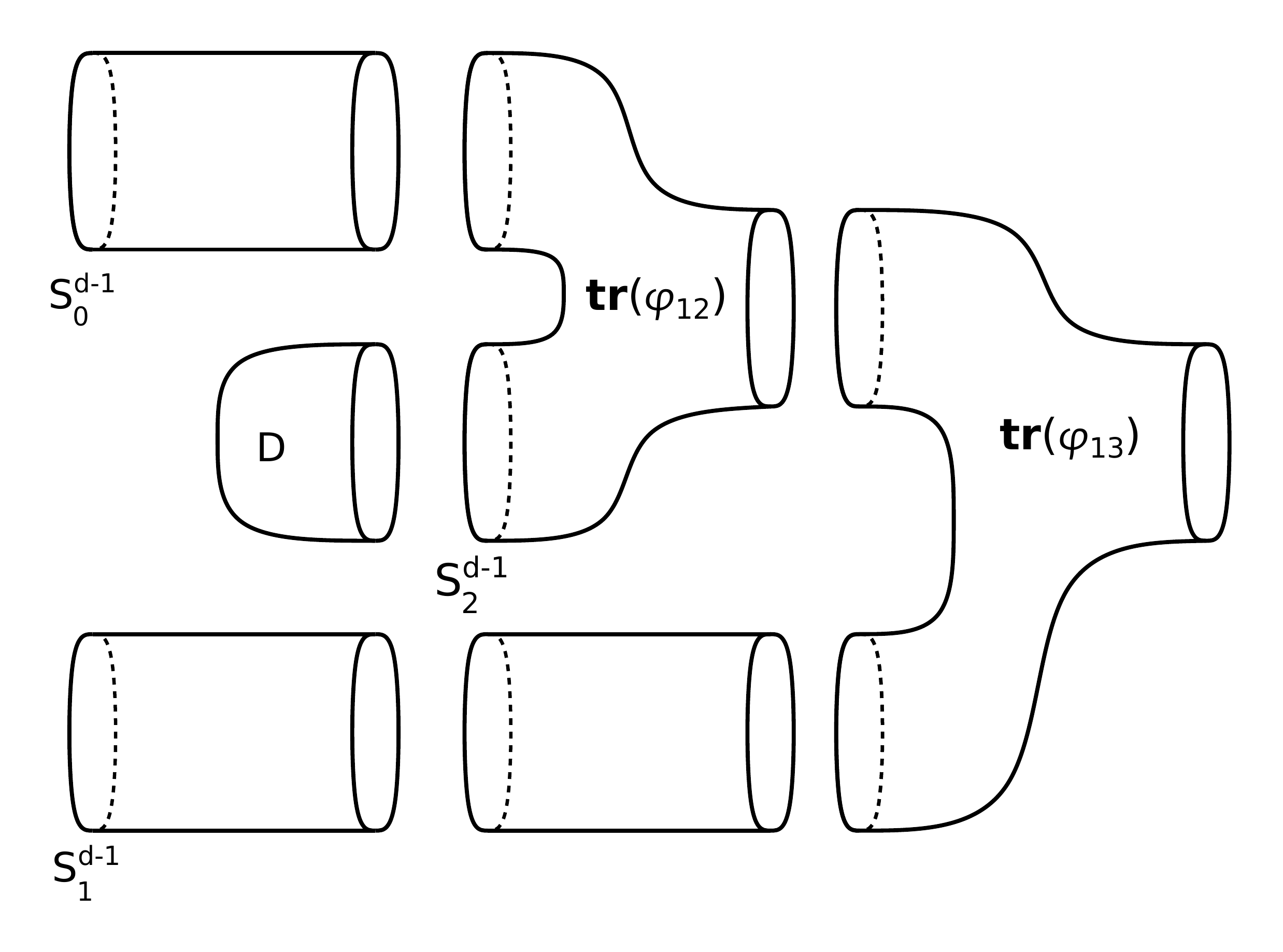}
		\caption{ }\label{fig:torpedo-cobordism}
	\end{figure}\\
	Let $D=D^d\colon \emptyset\leadsto S^{d-1}$ denote the $d$-dimensional disk.
	\begin{prop}
		$W$ is $\Spin$-cobordant to $D^\op\amalg D^\op\amalg D$.
	\end{prop}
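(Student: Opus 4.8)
The plan is to identify the cobordism $W$, which is built from cylinders, a disk, and two surgery traces, with the standard cobordism $D^\op\amalg D^\op\amalg D\colon S^{d-1}\amalg S^{d-1}\leadsto S^{d-1}$ up to the relative $\Spin$-cobordism relation of \pref{Definition}{def:structured-bordism-set}. Since both sides have the same boundary $S^{d-1}_0\amalg S^{d-1}_1$ incoming and $S^{d-1}$ outgoing, and both have connected outgoing boundary, the care discussed in the warning remark is not needed; it suffices to produce an abstract $\Spin$-cobordism $X$ between them, rel boundary.

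First I would analyze the underlying smooth manifold $W$. The trace $\tr(\varphi_{12})$ of the surgery along $\varphi_{12}\colon S^0\times D^{d-1}\embeds S^{d-1}_0\amalg S^{d-1}_2$ is the manifold obtained by attaching a $1$-handle to $(S^{d-1}_0\amalg S^{d-1}_2)\times[0,1]$, so its outgoing boundary is the connected sum $S^{d-1}_0\# S^{d-1}_2\cong S^{d-1}$. Stacking this with $D^d$ (filling in the component $S^{d-1}_1$... wait, tracking carefully) and then with $\tr(\varphi_{13})$ does a second $1$-handle attachment connecting in $S^{d-1}_1$. The key observation is that a surgery trace along an embedding of $S^0\times D^{d-1}$ (i.e. a $1$-handle, equivalently the elementary cobordism of a $0$-surgery on the complementary side) is diffeomorphic rel boundary to a cylinder with a disk removed and reglued — concretely, $\tr(\varphi_{12})\cong (S^{d-1}\times[0,1])\setminus \mathring{D}^d$ viewed as a cobordism $S^{d-1}\amalg S^{d-1}\leadsto S^{d-1}$, which is exactly $D^\op$ composed appropriately. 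Assembling the three pieces, $W$ as a smooth manifold is diffeomorphic rel boundary to $D^\op\amalg D^\op\amalg D$; I would draw this as the straightforward "pair of pants with a disk capping the reference leg" picture matching \pref{Figure}{fig:torpedo-cobordism}.

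It then remains to check the $\Spin$-structures agree. Both $W$ and $D^\op\amalg D^\op\amalg D$ carry $\Spin$-structures restricting to the fixed ones on the boundary spheres (with the sign convention $(-1)^i\hat l_i$ of \pref{Definition}{def:structured-bordism-set}). Since $S^{d-1}$ for $d\ge7$ is simply connected and the relevant cobordisms are simply connected (a pair of pants, a disk, and $1$-handle traces on simply connected pieces are all simply connected for $d-1\ge2$), $H^1(\,\cdot\,;\bbZ/2)=0$, so a $\Spin$-structure extending a given one on the boundary is unique up to isomorphism when it exists. Hence the diffeomorphism rel boundary from the previous step automatically carries one $\Spin$-structure to the other, and $W$ is $\Spin$-cobordant (indeed $\Spin$-diffeomorphic) rel boundary to $D^\op\amalg D^\op\amalg D$.

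The main obstacle is the bookkeeping in the first geometric step: correctly matching up which boundary sphere of each surgery trace is glued to which, verifying that the composite is genuinely the pair-of-pants-plus-disk rather than some other genus or handle configuration, and making sure the identification is rel boundary (not just abstractly diffeomorphic). Once the smooth picture is pinned down, the $\Spin$-structure comparison is essentially automatic by the simple-connectivity argument above, and one may alternatively appeal directly to \pref{Proposition}{prop:double-is-nullbordant} to cancel the doubles $D\amalg D^\op$ appearing after reorganizing $W$, which gives the same conclusion with even less handle-tracking.
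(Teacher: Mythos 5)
The central claim of your argument --- that $W$ is \emph{diffeomorphic} rel boundary to $D^\op\amalg D^\op\amalg D$ --- is false, and the error is visible already at the level of connected components. Tracing through the definition, $W$ is assembled by gluing $(S^{d-1}_0\amalg S^{d-1}_1)\times[0,1]\amalg D^d$ to $\tr(\varphi_{12})\amalg S^{d-1}_1\times[0,1]$ to $\tr(\varphi_{13})$, and the two traces (each a connected pair-of-pants) join every piece into a single component: $W$ is connected. By contrast $D^\op\amalg D^\op\amalg D$ has three components. No diffeomorphism rel boundary can exist, and the subsequent $\Spin$-structure uniqueness argument is then beside the point. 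The relation you actually need is $\Spin$-\emph{cobordance} rel boundary, and there the connectivity changes, which is exactly where the content of the proposition lies.

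The correct way to make your picture rigorous is what the paper does: observe that each trace $\tr(\varphi_{12})$, $\tr(\varphi_{13})$ is the \emph{internal connected sum} of the three disks in $D^\op\amalg D^\op\amalg D$ (your identification of the trace with a pair-of-pants $S^d$ minus three disks is right), and internal connected sum in the interior of a cobordism does not change its class in $\Omega^\Spin_d(\,\cdot\,,\,\cdot\,)$. Replacing both traces by $D^\op\amalg D^\op\amalg D$ and reassembling, the composite becomes $D^\op\amalg D^\op\amalg D$ together with two closed copies of $D\cup D^\op=S^d$; these are $\Spin$-nullcobordant (e.g. by \pref{Proposition}{prop:double-is-nullbordant}), so they may be discarded. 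Your final remark --- that one could instead reorganize $W$ and cancel doubles via \pref{Proposition}{prop:double-is-nullbordant} --- is in fact the sound route and essentially coincides with the paper's proof, but as written it is an unsubstantiated alternative attached to a main argument whose key step does not hold. To repair the proof, promote that remark to the main argument: show the traces are cobordant (not diffeomorphic) rel boundary to disjoint disks, carry out the reassembly explicitly, and cancel the resulting closed $S^d$ summands.
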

	\begin{proof}
		The respective cobordisms $\tr(\varphi_{12})$ and $\tr(\varphi_{13})$ are both $\Spin$-cobordant to $D^\op\amalg D^\op \amalg D$ via connected sum on the interior. So $W$ is cobordant to $D^\op\amalg D^\op\amalg D\amalg 2(D\cup D^\op)$.
	\end{proof}
	\begin{cor}\label{cor:walsh}
		If $d\ge7$, then $\mu^\tor$ and $\mu_{D}$ are homotopic. 
	\end{cor}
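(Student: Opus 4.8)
The plan is to combine the preceding proposition with the defining property of the surgery map $\calS$ stated in the main theorem of \cite{actionofmcg}. The proposition tells us that the cobordism $W$ built out of surgery traces and the torpedo-disk pieces is $\Spin$-cobordant (rel boundary, \ie in the bordism set $\Omega_d^{\Spin}(S^{d-1}\amalg S^{d-1}, S^{d-1})$) to $D^\op\amalg D^\op\amalg D = X_D$. Since $\calS$ is a well-defined function on $\Omega_d^\theta$, equal $\theta$-cobordism classes are sent to the same homotopy class of maps; hence $\calS(W) = \calS(X_D) = \mu_D$ as maps $\calR^+(S^{d-1})\times\calR^+(S^{d-1})\to\calR^+(S^{d-1})$.

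Next I would reconcile this with $\mu^\tor$. Earlier in the subsection it was observed that, because $u = g_\circ = e_D$, the Walsh multiplication $\mu^\tor$ (transported across the weak equivalence $\calR^+(S^{d-1},\varphi_1)\embeds\calR^+(S^{d-1})$) is literally realized by the surgery map $\calS(W)$ for the cobordism $W$ displayed in \pref{Figure}{fig:torpedo-cobordism}. The one point deserving a line of justification is that the surgery construction applied to the traces $\tr(\varphi_{12})$, $\tr(\varphi_{13})$ and to the cylinder-and-disk pieces, with the torpedo metric $g_\tor$ inserted on the relevant disks, produces exactly Walsh's formula $f^*(g_0'\cup u'\cup g_1\cup(\varphi_1)_*g_\tor)$ — this is precisely the compatibility of $\calS$ with compositions and disjoint unions together with the normalization $\calS(D^d)(g_\emptyset) = g_\circ^{d-1}$ from \pref{Example}{ex:sphere-h-space}, so under the hypothesis $d\ge 7$ (needed for $\calS$ to be defined) there is nothing further to check. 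Combining the two identifications gives $\mu^\tor = \calS(W) = \mu_D$ in the homotopy category.

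The main obstacle is bookkeeping rather than mathematics: one must make sure that the identification of $\mu^\tor$ with $\calS(W)$ is done with the correct, coherent choices of $\theta$-structures and boundary parametrizations, so that $W$ really represents the asserted class in $\Omega_d^{\Spin}(S^{d-1}\amalg S^{d-1},S^{d-1})$ and not some other class differing by a choice of $\Spin$-structure on a closed summand — the \pref{Remark}{fig:warning}-style subtlety. Here the outgoing boundary $S^{d-1}$ is connected, so by the discussion following that remark the pictorial manipulations are safe, and the extra closed summands $2(D\cup D^\op)$ appearing in the proof of the proposition are $\Spin$-nullcobordant doubles, hence do not affect the class. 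I would therefore phrase the proof as: by the Proposition, $[W] = [D^\op\amalg D^\op\amalg D]$ in $\Omega_d^{\Spin}(S^{d-1}\amalg S^{d-1},S^{d-1})$; applying $\calS$ and using that $\mu^\tor = \calS(W)$ and $\mu_D = \calS(D^\op\amalg D^\op\amalg D)$ yields $\mu^\tor = \mu_D$, completing the proof.
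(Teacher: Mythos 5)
Your argument is correct and is exactly the (implicit) proof the paper intends: $\mu^\tor = \calS(W)$ by the discussion preceding the proposition, $\mu_D = \calS(D^\op\amalg D^\op\amalg D)$ by definition, and the proposition identifies the two bordism classes, so well-definedness of $\calS$ on $\Omega_d^{\Spin}(S^{d-1}\amalg S^{d-1},S^{d-1})$ finishes it. Your parenthetical observations — that the connected outgoing boundary makes the pictorial manipulations safe, and that the extra summand $2(D\cup D^\op)$ in the proposition's proof is a nullcobordant double hence harmless — are exactly the right things to note.
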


\subsection{Stolz's construction}
	Let $M$ be a manifold of dimension $d-1\ge5$ of positive scalar curvature. In \cite{stolz_concordance} Stolz proved the existence of a group structure on concordance classes of psc-metrics on $M$ which was further analysed by Weinberger--Yu in \cite{weinbergeryu} and Xie--Yu--Zeidler \cite{xyz}. For this and the succeeding subsection we need to consider spaces of metrics on manifolds with boundaries. Let $W$ be a manifold with boundary $M$ and let $\calR^+(W)$ denote the space of those psc-metrics on $W$ that restrict to a cylinder $g+dt^2$ in some neighbourhood of the boundary. Since $\scal(g+dt^2) = \scal(g)$, we have a well-defined restriction map
	\[\res\colon\calR^+(W)\too\calR^+(M)\]
	and for $g\in\calR^+(M)$ we define the space $\calR^+(W)_g\coloneqq \res^{-1}(g)$ to consist of those metrics that restrict to $g$ on the boundary. In this situation, we will sometimes call $g$ a \emph{boundary condition}.
	
	\begin{definition}
		Two metrics $g_0, g_1\in\calR^+(M)$ are called \emph{concordant} if there exists a metric $G\in\calR^+(M\times[0,1])_{g_0\amalg g_1}$. The metric $G$ is called a \emph{concordance}. Being concordant is an equivalence relation and we denote the \emph{set of concordance classes of psc-metrics on $M$} by $\ccc(\calR^+(M))$.
	\end{definition}	
	\noindent As a convention we denote concordance classes of metrics by $[g]_c$ and isotopy classes by $[g]$. Since isotopy implies concordance, we get a canonical map $\pi_0(\calR^+(M))\twoheadrightarrow\ccc(\calR^+(M))$. We have the following result:
	\begin{prop}[{\cite[Proposition 3.16 and Remark 3.17]{actionofmcg}}]\label{prop:induced-map}
		Let $\theta$ be the tangential $2$-type of $M_1$ and let $W\colon M_0\leadsto M_1$ be a $\theta$-cobordism. Then $\calS(W)$ induces a map $\ccc(\calR^+(M_0))\to\ccc(\calR^+(M_1))$. Furthermore, if there exists a $G\in\calR^+(W)_{g,h}$, then $\calS(W)([g]_c) = [h]_c$.
	\end{prop}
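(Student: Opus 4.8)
The plan is to deduce \pref{Proposition}{prop:induced-map} from the properties of the surgery map $\calS$ recalled in \cite[Theorem 3.6]{actionofmcg}, namely its compatibility with compositions and the normalization $\calS(M\times[0,1]) = \id$, together with the characterization of concordance in terms of cylinders. First I would observe that it suffices to show that if $g_0,g_1\in\calR^+(M_0)$ are concordant, then $\calS(W)(g_0)$ and $\calS(W)(g_1)$ are concordant in $\calR^+(M_1)$; the induced map on $\ccc$ is then well-defined by sending $[g]_c$ to $[\calS(W)(g)]_c$ (note that $\calS(W)$ is only a homotopy class, but homotopic maps send a metric to isotopic, hence concordant, metrics, so the assignment is unambiguous).

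The key geometric input is the following: a concordance $G\in\calR^+(M_0\times[0,1])_{g_0\amalg g_1}$ can be reinterpreted as a psc-metric on the cylinder $M_0\times[0,1]$, viewed as the $\theta$-cobordism $M_0\leadsto M_0$, which restricts to $g_0$ on the incoming and $g_1$ on the outgoing boundary. Thus the second, more precise assertion of the proposition — if there exists $G\in\calR^+(W)_{g,h}$ then $\calS(W)([g]_c)=[h]_c$ — is really the main statement, and the first assertion (well-definedness) is the special case $W = M_0\times[0,1]$, using $\calS(M_0\times[0,1])=\id$ so that $\calS(M_0\times[0,1])([g_0]_c)=[g_1]_c$ forces $[g_0]_c=[g_1]_c$, i.e. $\calS(W)$ respects the concordance relation once we know the precise statement for $W$ composed with the cylindrical concordance. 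So I would prove the precise statement first.

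To prove that $\calS(W)([g]_c)=[h]_c$ whenever $G\in\calR^+(W)_{g,h}$: I would invoke the construction of $\calS$ from \cite{actionofmcg}, where $\calS(W)$ is built (via Chernysh's parametrized surgery theorem and a choice of handle decomposition of $W$ relative to $M_1$) so that it is defined by first gluing on a fixed psc-metric on $W$ extending the boundary condition and then performing surgeries; the point is that $\calS(W)(g)$ is isotopic (through psc-metrics) to $\res$ of a psc-metric on $W$ with incoming boundary condition $g$. Concretely, if $\bar g\in\calR^+(W)_{g,\calS(W)(g)}$ is such a metric realizing the definition of $\calS(W)$, then $\bar g$ and $G$ are both psc-metrics on $W$ agreeing with $g$ on $M_0$; gluing $G$ to $W^\op$ with $\bar g$ along $M_0$, or more simply gluing $\bar g^{\op}$ (on $W^\op\colon M_1\leadsto M_0$) to $G$ (on the cylinder, rescaled), produces a psc-metric on $W^\op\cup_{M_0}(M_0\times[0,1])\cong W^\op$, hence after flipping a concordance between $\calS(W)(g)$ and $h$. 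I would phrase this as: the two metrics $\calS(W)(g)$ and $h$ on $M_1$ are both obtained by restricting psc-metrics on $W$ with the same incoming boundary condition $g$, and any two such restrictions are concordant because the difference is captured by a psc-metric on $W\cup_{M_0} W^\op \sim M_1\times[0,1]$ (\pref{Proposition}{prop:double-is-nullbordant}), which is a concordance.

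The main obstacle I anticipate is making precise the claim that $\calS(W)(g)$ is represented, up to isotopy, by the restriction to $M_1$ of an explicit psc-metric on $W$ with incoming boundary condition $g$ — this is exactly the content buried in the definition of the surgery map and its interaction with Chernysh's theorem, and it must be quoted carefully from \cite{actionofmcg} rather than reproved. Once that is granted, the remaining work is the routine gluing/bending argument turning two psc-metrics on $W$ with a common boundary condition into a concordance on $M_1$, using that concordance classes are unaffected by the standard cylinder-stretching and corner-smoothing manipulations, and using \pref{Proposition}{prop:double-is-nullbordant} to identify $W\cup W^\op$ with a cylinder up to $\theta$-cobordism (which, combined with the first, already-established part of the proposition applied to that cobordism, yields the desired concordance). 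Thus the logical order is: (1) record that homotopic maps preserve concordance classes; (2) quote the ``restriction'' description of $\calS(W)(g)$ from \cite{actionofmcg}; (3) prove the precise statement via gluing; (4) specialize to $W=M_0\times[0,1]$ to get well-definedness.
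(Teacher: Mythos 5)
Your proposal follows essentially the same route as the paper: realize $\calS(W)(g)$ (up to isotopy) as the outgoing restriction of a psc-metric $G'\in\calR^+(W)_{g,h'}$ — this is the content of \cite[Theorem 3.1]{walsh_parametrized1}, which the paper cites explicitly at this point — and then glue $G^\op\cup G'$ along $M_0$ to get a psc-metric on $W^\op\cup_{M_0}W\colon M_1\leadsto M_1$ with boundary conditions $h,h'$, and finally exploit that $W^\op\cup W$ is $\theta$-cobordant to $M_1\times[0,1]$ relative to $M_1\times\{0,1\}$ (\pref{Proposition}{prop:double-is-nullbordant}). Your reduction of well-definedness to the "furthermore" clause by composing with a cylinder is a small reorganization (the paper simply cites the first clause to \cite[Proposition 3.16]{actionofmcg}) and is logically fine.

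There is, however, a real gap in your closing step. You assert that the psc-metric on $W^\op\cup_{M_0}W$ "is a concordance," or alternatively that "cylinder-stretching and corner-smoothing manipulations" together with "the first, already-established part of the proposition applied to that cobordism" will finish the job. Neither works as stated. The manifold $W^\op\cup_{M_0}W$ is only \emph{$\theta$-cobordant} rel boundary to $M_1\times[0,1]$, not diffeomorphic to it, so the metric you built is not yet a concordance; and stretching/smoothing are far too weak to convert one into the other. Appealing to the first part of the proposition is also not available: in your proposed logical order it comes last, and moreover "that cobordism" is a $(d+1)$-manifold with corners, not an element of any $\Omega_d^\theta(-,-)$ to which $\calS$ applies. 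What is actually needed — and what the paper invokes — is the (parametrized Gromov--Lawson--Chernysh) surgery theorem: the rel-boundary $\theta$-cobordism from $W^\op\cup W$ to $M_1\times[0,1]$, together with the $2$-connectivity built into the tangential $2$-type, lets one carry a psc-metric with fixed boundary conditions $h\amalg h'$ across, producing $H\in\calR^+(M_1\times[0,1])_{h,h'}$ and hence the desired concordance. You should replace the vague and circular patch with this explicit appeal to the surgery theorem.
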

	\begin{proof}
	Let $G\in\calR^+(W)_{g,h}$ and $\calS(W)([g]_c) = [h']_c$. By \cite[Theorem 3.1]{walsh_parametrized1} there exists $G'\in\calR^+(W)_{g,h'}$ and hence $G^\op\cup G' \in\calR^+(W^\op\cup W)_{h,h'}$ where $G^\op\in\calR^+(W^\op)_{h,g}$ denotes the flipped metric. Now $W^\op\cup W$ is $\theta$-cobordant to $M_1\times[0,1]$ relative to the boundary and by the surgery theorem, there exists a metric $H\in\calR^+(M_1\times[0,1])_{h,h'}$, hence $[h']_c=[h]_c$. The rest has been proven in \cite[Proposition 3.16]{actionofmcg}.
	\end{proof}
	
	\noindent The multiplication of Stolz on $\ccc\calR^+(M)$ is defined as follows. We take the disjoint union of two cylinders over $M$ and consider them as a $\theta$-cobordism from $M\amalg -M\amalg M\leadsto M$ as in \pref{Figure}{fig:conc-multiplication}. Here $-M$ denotes the same underlying manifold with the opposite $\theta$-structure.
	\begin{figure}[ht]
		\includegraphics[width=0.8\textwidth]{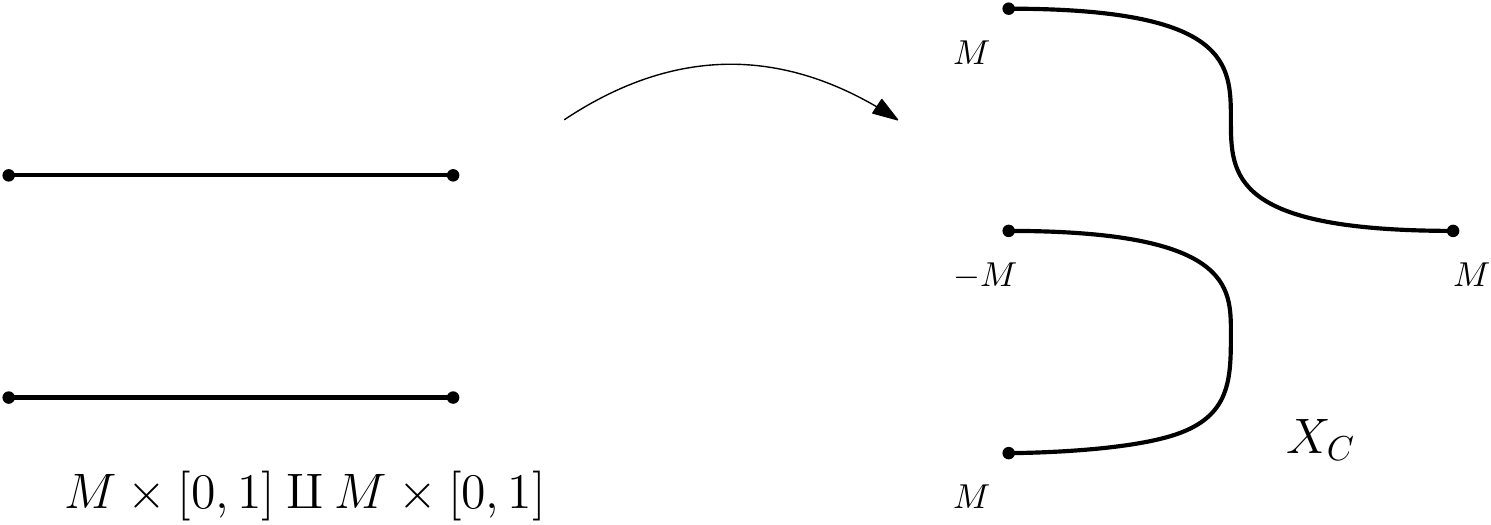}
		\caption{}\label{fig:conc-multiplication}
	\end{figure}
	
	\noindent After performing surgery on this we obtain a cobordism $X_C\colon M\amalg -M\amalg M\leadsto M$ such that the inclusion of the outgoing boundary $M\embeds X_C$ is $2$-connected. Let $u\in\calR^+(M)$ be fixed. The multiplication $\mu^{\conc,u}$ of Stolz is then defined by $\mu^{\conc,u}([g_0]_c,[g_1]_c) = [g]_c$ if there exists a psc-metric $G$ on $X_C$ restricting to $(g_0\amalg u\amalg g_1)\amalg g$ on the boundary. We have the following result relating this multiplication to the surgery map and the $H$-space structure from \pref{Theorem}{thm:main}.
	
	\begin{prop}\leavevmode
		\begin{enumerate}
			\item The map $\mu^{\conc,u}$ is associative, commutative and induced by a map $\calR^+(M)\times\calR^+(M)\to\calR^+(M)$ of spaces.
			\item If $M$ is nullcobordant in its own tangential $2$-type via a nullcobordism $W\colon \emptyset\leadsto M$, then $\mu^{\conc,e_W} = \mu_W$.
		\end{enumerate}
	\end{prop}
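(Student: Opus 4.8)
The plan is to realise both statements by the surgery map applied to the cobordism $X_C$, and then to argue inside the cobordism set $\Omega_d^\theta$ in the style of the graphical calculus of \pref{Section}{sec:graphicalcalculus}. The basic structural observation is that $X_C$ arises from a disjoint union of two cylinders over $M$ by surgeries performed in the interior, and is therefore $\theta$-cobordant rel boundary to the pre-surgery cobordism $X_C^{\mathrm{pre}}:=\mathrm{cup}_M\amalg(M\times[0,1])\colon M\amalg -M\amalg M\leadsto M$, where $\mathrm{cup}_M\colon M\amalg -M\leadsto\emptyset$ is one of the two cylinders, bent so that both of its ends are incoming (which is exactly what produces the $-M$-leg), and the other cylinder $M\times[0,1]$ joins the remaining incoming $M$ to the outgoing $M$. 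I will use $X_C^{\mathrm{pre}}$ only to compute $\theta$-cobordism classes; it is not a model for psc-metrics, since the surgeries are what make psc-metrics with arbitrary boundary condition on the incoming legs exist. Recall that $\calS$ is defined on the whole cobordism set $\Omega_d^\theta(-,M)$ (the only hypothesis of the cited theorem being that the outgoing manifold $M$ carries a $2$-connected $\theta$-structure), hence is invariant under $\theta$-cobordism rel boundary.

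For (1), the map of spaces inducing $\mu^{\conc,u}$ is $m:=\calS(X_C)(\_,u,\_)\colon\calR^+(M)\times\calR^+(M)\to\calR^+(M)$: by \pref{Proposition}{prop:induced-map} the metric $\calS(X_C)(g_0,u,g_1)$ is joined to $g_0\amalg u\amalg g_1$ by a psc-metric on $X_C$, so $[m(g_0,g_1)]_c=\mu^{\conc,u}([g_0]_c,[g_1]_c)$ by the definition of $\mu^{\conc,u}$. For commutativity I would show that $X_C$ is $\theta$-cobordant rel boundary to the cobordism obtained by relabelling its two $M$-legs; this reduces to comparing the two pre-surgery configurations, one with $\mathrm{cup}_M$ on the legs $\{1,2\}$ and $M\times[0,1]$ from leg $3$ to the outgoing boundary, the other with $\mathrm{cup}_M$ on $\{2,3\}$ and $M\times[0,1]$ from leg $1$, which are joined by an $M\times D^2$-cobordism (it carries a $\theta$-structure because $\theta$ is the stabilised tangential $2$-type of $M$, so the stabilised $\theta$-structure on $M$ extends over $M\times D^2$). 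For associativity I would replace each of the two copies of $X_C$ occurring in the two iterated products by $X_C^{\mathrm{pre}}$ and check that the resulting two five-legged cobordisms $M^{\amalg5}\leadsto M$ are both equal to $\mathrm{cup}_M\amalg\mathrm{cup}_M\amalg(M\times[0,1])$ (the two cups on the two $\{M,-M\}$-leg-pairs, the cylinder on the last $M$-leg), hence $\theta$-cobordant rel boundary. In both cases one then concludes by gluing the witnessing psc-metrics, producing a psc-metric on a cobordism of the form $Y^\op\cup Y'$ with $Y\sim Y'$ rel boundary, which is $\theta$-cobordant rel boundary to a cylinder by \pref{Proposition}{prop:double-is-nullbordant}, so that the two output metrics are concordant by the parametrised surgery theorem, exactly as in the proof of \pref{Proposition}{prop:induced-map}.

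For (2) I would put $e_W$ into the middle leg. Since $e_W=\calS(W)(g_\emptyset)$ there is a psc-metric $g_W\in\calR^+(W)_{e_W}$, and gluing $g_W$ onto a psc-metric on $X_C$ with boundary $(g_0\amalg e_W\amalg g_1)\amalg g$ yields a psc-metric on the capped cobordism $\hat X_C:=X_C\cup_{(-M)}W\colon M\amalg M\leadsto M$ with boundary $(g_0\amalg g_1)\amalg g$. The geometric heart is then the computation, in $\Omega_d^\theta(M\amalg M,M)$,
\[\hat X_C\ \sim\ (\mathrm{cup}_M\cup_{(-M)}W)\amalg(M\times[0,1])\ \sim\ W^\op\amalg(M\times[0,1])\ \sim\ W^\op\amalg W^\op\amalg W\ =\ X_W,\]
where the first $\sim$ is $X_C\sim X_C^{\mathrm{pre}}$, the second uses that $\mathrm{cup}_M$ capped on its $-M$-end with the nullcobordism $W$ is $W^\op$ (up to bookkeeping of $\theta$-structures), and the third is \pref{Proposition}{prop:double-is-nullbordant}. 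Hence $\calS(\hat X_C)=\calS(X_W)=\mu_W$. Comparing, on $\hat X_C$, the psc-metric glued from $g_W$ (whose outgoing restriction is $m(g_0,g_1)$) with the one produced by $\calS(\hat X_C)$ — same incoming boundary condition, so concordant outgoing restrictions by the cylinder argument once more — and using (1) to identify the concordance class of $m(g_0,g_1)$ with $\mu^{\conc,e_W}([g_0]_c,[g_1]_c)$, one gets $\mu^{\conc,e_W}([g_0]_c,[g_1]_c)=[\mu_W(g_0,g_1)]_c$, i.e. $\mu^{\conc,e_W}=\mu_W$.

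I expect the main difficulty to be bookkeeping rather than ideas. First, the surgery map is only directly available when the outgoing boundary is the honest $2$-connected manifold $M$; the intermediate cobordisms that appear when one stacks two copies of $X_C$, or caps a leg, have disconnected outgoing boundary, so one must not feed them into $\calS$ but argue at the level of psc-metrics together with \pref{Proposition}{prop:double-is-nullbordant} and the parametrised surgery theorem, as indicated above. Second — and this is where most of the care goes — one has to track $\theta$-structures honestly: through the bending of a cylinder into $\mathrm{cup}_M$ (which forces the $-M$-legs and their orientations), through the $M\times D^2$-cobordism used for commutativity, and through capping $\mathrm{cup}_M$ with $W$, so as to be sure that the resulting cobordism is genuinely $W^\op$ with the $\theta$-structure matching the corresponding leg of $X_W=W^\op\amalg W^\op\amalg W$, and that the leg-labellings in the associativity comparison line up as claimed.
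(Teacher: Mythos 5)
Your overall strategy matches the paper's: identify $\mu^{\conc,u}$ with the concordance-class map induced by $\calS(X_C)(\_,u,\_)$ via \pref{Proposition}{prop:induced-map}, then run the remaining verifications inside $\Omega_d^\theta$ by graphical-calculus manipulations of $X_C$, and for part (2) cap off the middle $-M$-leg with the nullcobordism $W$ and compare to $X_W$ using \pref{Proposition}{prop:double-is-nullbordant}. The one presentational difference is that you reason explicitly with the pre-surgery model $X_C^{\mathrm{pre}}=\mathrm{cup}_M\amalg(M\times[0,1])$, appealing to the surgery trace to pass back and forth, whereas the paper manipulates $X_C$ directly in the figures; these are the same move phrased two ways, and both correctly stay inside the cobordism relation.

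The only place where you make the argument harder than it needs to be is in part (2). Your caution about \enquote{not feeding disconnected outgoing boundaries into $\calS$} is unfounded: the paper notes right after stating the surgery theorem that for disconnected $B$ the surgery map splits as a product, so $\calS$ applies componentwise to cobordisms such as $M\times[0,1]\amalg -W\amalg M\times[0,1]\colon M\amalg M\leadsto M\amalg -M\amalg M$. This gives $\calS(M\times[0,1]\amalg -W\amalg M\times[0,1])=(\id,e_W,\id)$, and then functoriality alone yields $\calS(X_W)=\calS(X_C)\circ(\id,e_W,\id)$ directly from your cobordism identity $\hat X_C\sim X_W$, i.e.\ $\mu_W=\calS_{X_C}(\_,e_W,\_)$ as homotopy classes of maps $\calR^+(M)\times\calR^+(M)\to\calR^+(M)$. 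Your detour through psc-metrics, \pref{Proposition}{prop:induced-map} and the relative-cylinder comparison only recovers the weaker statement that the two maps agree on $\ccc\calR^+(M)$; the functoriality argument is both shorter and gives the sharper conclusion, which is what makes the claim \enquote{$\mu^{\conc,e_W}$ is induced by $\mu_W$} meaningful in light of part (1).
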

	
	\begin{proof}\leavevmode
	\begin{enumerate}
		\item It follows directly that from \pref{Proposition}{prop:induced-map} that $\mu^{{\conc,u}}(g_0,g_1) = [\calS_{X_C}(g_0,u,g_1)]_c$ and so the multiplication $\mu^{\conc,u}$ is induced by the map $\calS_{X_C}$. Associativity and commutativity of $\mu^{\conc,u}$ can then be proven using graphical calculus, where we mark the part incoming boundary that does not belong to the multiplication by $u$ (see \pref{Figure}{fig:conc_comm} and \pref{Figure}{fig:conc_assoc}).
	\begin{figure}[ht]
		\includegraphics[width=0.9\textwidth]{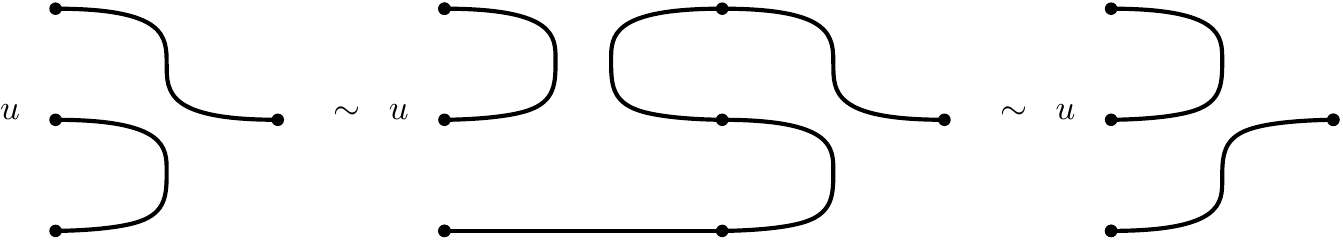}
		\caption{Commutativity of $\mu^{\conc,u}$}\label{fig:conc_comm}
	\end{figure}	
	\begin{figure}[ht]
		\includegraphics[width=0.8\textwidth]{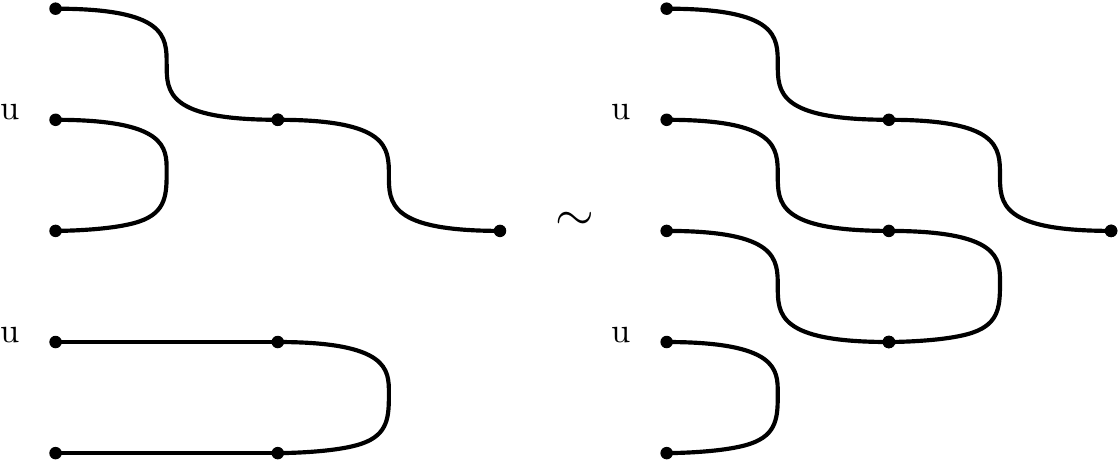}
		\caption{Associativity of $\mu^{\conc,u}$}\label{fig:conc_assoc}
	\end{figure}

		\item Let $M$ is nullcobordant in its own tangential $2$-type via a nullcobordism $W\colon \emptyset\leadsto M$. Since $X_W\sim (M\times[0,1]\amalg  -W\amalg M\times[0,1])\cup X_C$ (see \pref{Figure}{fig:capping-off}), we have:
	\[\mu^{\conc,e_W} = \calS_{X_C}(\_,e_W,\_)=\mu_W.\]\qedhere
	\begin{figure}[ht]
		\includegraphics[width=0.8\textwidth]{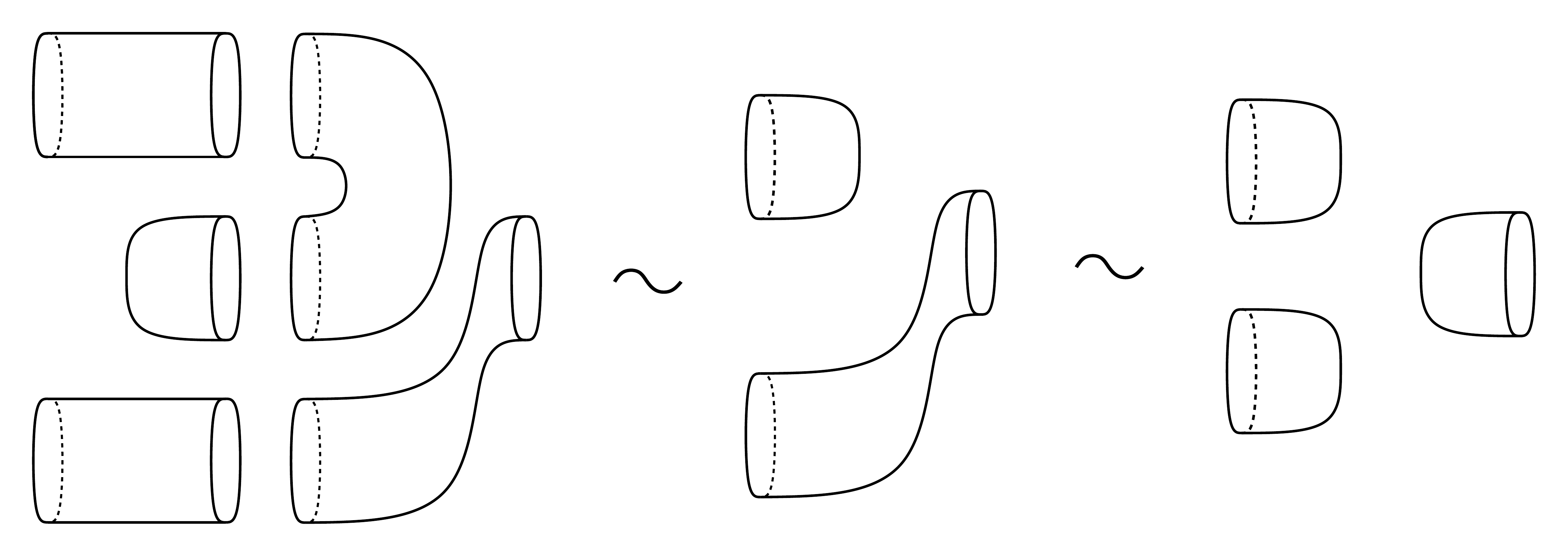}
		\caption{$\calS_{X_C}(\_,e_W,\_)=\mu_W.$}\label{fig:capping-off}
	\end{figure}
	\end{enumerate}
	\end{proof}

\subsection{Concatenation of cylinders over spheres}

Let $M^{d-2}$ be a manifold and $g\in\calR^+(M)$. Then $e_{\cyl}\coloneqq g+dt^2\in \calR^+(M\times[0,1])_{g,g}$ and $\calR^+(M\times[0,1])_{g\amalg g}$ becomes a homotopy-associative $H$-space with the multiplication map $\mu_\cyl$ given by $(G,G')\mapsto G\cup G'$ and appropriately rescaling back to $M\times[0,1]$. The neutral element is given by $e_\cyl$. It has been shown in \cite[Theorem B]{erw_psc3}, that the components of invertible elements of $\calR^+(M\times[0,1])_{g\amalg g}$ carry an infinite loop space structure with underlying $H$-space multiplication given by $\mu_\cyl$, provided $M$ admits a relatively $2$-connected nullcobordism. Before further studying this multiplication, we need to recall the notion of (right-)stable metrics due to Ebert--Randal-Williams.

\begin{definition}[{\cite[Definition 3.1.1]{erw_psc2}}]
	Let $W\colon M_0\leadsto M_1$ be a cobordism and let $g_i\in\calR^+(M_i)$ be boundary conditions. A psc-metric $G\in \calR^+(W)_{g_0\amalg g_1}$ is called \emph{right-stable}, if for every cobordism $W'\colon M_1\leadsto M_2$ and every boundary condition $g_2\in\calR^+(M_2)$, the map
	\begin{align*}
		\calR^+(W')_{g_1\amalg g_2}&\to\calR^+(W\cup W')_{g_0\amalg g_2}\\
			H&\mapsto G\cup H
	\end{align*}
	is a weak equivalence. Similarly, $G$ is called left-stable if the map $H'\mapsto H'\cup G$ is a weak equivalence for every cobordism $W''\colon M_{-1}\leadsto M_0$.
\end{definition}

\noindent Now, let the manifold $M$ from above be nullcobordant via $N\colon\emptyset\leadsto M$ such that the pair $(N,M)$ is $2$-connected. Then, by \cite[Theorem D]{erw_psc2} there exists a boundary condition $g\in\calR^+(M)$ such that $N$ admits a right-stable metric $G_\rst\in\calR^+(N)_g$, provided that $d\ge7$.  We note that the metric $G_\rst^\op\in\calR^+(N^\op)_g$ obtained by flipping $G_\rst$ is left-stable and therefore we have a homotopy equivalence
\[\cl_{G_\rst}\colon \calR^+(M\times[0,1])_{g,g} \too\calR^+(N\cup M\times[0,1]\cup N^\op)\]
defined by mapping a metric $G$ on $M\times[0,1]$ to $G_\rst\cup G\cup G_\rst^\op$, i.e. it is given by gluing in $G_\rst$ on both $N$ and $N^\op$. Note that $N\cup M\times[0,1]\cup N^\op = dN$ is diffeomorphic to the double of $N$. Since doubles are nullcobordant (cf. \pref{Proposition}{prop:double-is-nullbordant}), there exists a nullcobordism $W\colon \emptyset\leadsto dN$.
\begin{samepage}
\begin{que}\label{que:cylinder}\leavevmode
	\begin{enumerate}
		\item Is there a boundary condition $g\in\calR^+(M)$ and a $\theta$-nullcobordism $W\colon \emptyset\leadsto dN$ such that there exists an equivalence of $H$-spaces 
		\[ (\calR^+(M\times[0,1])_{g\amalg g},\mu_\cyl) \to (\calR^+(dN),\mu_W)?\] 
		\item If so, can one choose $W$ and $g$ such that there exists a right-stable metric $G_\rst\in\calR^+(N)_g$ for which the map $\cl_{G_\rst}$ is an equivalence? 
	\end{enumerate}
\end{que}
\end{samepage}

\noindent The natural starting point for investigating this question is the case that $M=S^{d-2}$, $g=g_\circ^{d-2}$ is the round metric, $N=D^{d-1}$, $G_\rst=g_\tor$ is the torpedo metric and $W=D\coloneqq D^d$. We identify $dD^{d-1}=S^{d-1}=\partial W$. In this case it is possible to get a more explicit form of the multiplication map $\mu_W$: Let $\varphi\colon S^0\times D^{d-1}\embeds S^{d-1}\amalg S^{d-1}$ be the inclusion of the lower hemisphere into the first and the upper hemisphere into the second factor. We define the map $\overline\calS_\varphi\colon \calR^+(S^{d-1}\amalg S^{d-1}, \varphi)\to \calR^+(S^{d-1})$ by
\begin{equation}\label{equ:1}
\overline\calS_\varphi((g\cup g_\tor^\op)\amalg (g_\tor\cup g')) = g\cup (g^{d-2}_\circ +dt^2) \cup g'.
\end{equation}
By the parametrized version of the Gromov--Lawson--Schoen--Yau surgery theorem (\cite{chernysh}, see also \cite{ebertfrenck}) the inclusion map $\calR^+(S^{d-1}\amalg S^{d-1}, \varphi)\embeds\calR^+(S^{d-1}\amalg S^{d-1})$ is a weak homotopy equivalence and we denote the composition of its homotopy inverse with $\overline\calS_\varphi$ by $\calS_\varphi$. By definition (see \cite[Definition 2.23 (3)]{actionofmcg}\footnote{see also \cite[Definition 3.1.1 (3)]{ownthesis}}) this agrees with $\calS(X_W)$ and the map $\mu_W$ is therefore homotopic to $\calS_\varphi$. Consider the following diagram
\begin{center}
\begin{tikzpicture}
	\node (01) at (0,4.5) {$\calR^+(S^{d-1})\times\calR^+(S^{d-1})$};
	\node (0) at (0,3) {$\calR^+(S^{d-1}\amalg S^{d-1})$};
	\node (1) at (0,1.5) {$\calR^+(S^{d-1}\amalg S^{d-1},\varphi)$};
	\node (2) at (0,0) {$\calR^+(S^{d-2}\times[0,1])_{g_\circ,g_\circ}\times \calR^+(S^{d-2}\times[0,1])_{g_\circ,g_\circ}$};
	\node (3) at (7,3) {$\calR^+(S^{d-1})$};
	\node (4) at (7,0) {$\calR^+(S^{d-2}\times[0,1])_{g_\circ,g_\circ}$};
	
	\draw[->](01) to node[above]{$\mu_W$} (3);
	\draw[double equal sign distance](01) to (0);
	\draw[->](0) to node[above]{$\calS_\varphi$} (3);
	\draw[->](1) to node[above]{$\overline\calS_\varphi$} (3);
	\draw[left hook->](1) to node[right]{$\simeq$} (0);
	\draw[->](2) to node[right]{$\cl_{g_\tor}\times\cl_{g_\tor}$} (1);
	\draw[->](2) to node[above]{$\mu_\cyl$} (4);
	\draw[->](4) to node[right]{$\cl_{g_\tor}$} (3);
\end{tikzpicture}
\end{center}
where the triangles commute up to homotopy by the definition and the cobordism invariance of $\calS$ and the lower square commutes up to homotopy by \pref{Equation}{equ:1} after appropriately rescaling the cylinders. We therefore can affirmatively answer \pref{Question}{que:cylinder} in this special case:
\begin{thm}\label{thm:compare}
	The map $\cl_{g_\tor}\colon(\calR^+(S^{d-2}\times[0,1])_{g_\circ,g_\circ},\mu_\cyl) \to (\calR^+(S^{d-1}),\mu_D)$ is an equivalence of $H$-spaces provided $d\ge7$.
\end{thm}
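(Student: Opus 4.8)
The plan is to read off the statement from the large diagram that has just been assembled. Everything hinges on three facts already in hand: (i) the map $\cl_{g_\tor}\colon\calR^+(S^{d-2}\times[0,1])_{g_\circ,g_\circ}\to\calR^+(S^{d-1})$ is a weak homotopy equivalence, because $g_\tor^\op$ is left-stable and $g_\tor$ is right-stable (this is the instance of \cite[Theorem D]{erw_psc2} for $N=D^{d-1}$, $G_\rst=g_\tor$, valid since $d\ge7$); (ii) the map $\mu_W$ is homotopic to $\calS_\varphi$, which is the content of the identification $\calS(X_W)=\calS_\varphi$ recalled just before the diagram; and (iii) the lower square of the diagram commutes up to homotopy, which is precisely \pref{Equation}{equ:1} after rescaling the two cylinders $S^{d-2}\times[0,1]$ back to standard length. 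So the argument is essentially the observation that a composite of these commuting cells expresses the compatibility of $\cl_{g_\tor}$ with the two multiplications.

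First I would record that $\cl_{g_\tor}$ is a homotopy equivalence by (i), so it remains to check the one relation defining an equivalence of $H$-spaces, namely $\cl_{g_\tor}\circ\mu_\cyl=\mu_D\circ(\cl_{g_\tor}\times\cl_{g_\tor})$, together with the statement that $\cl_{g_\tor}$ sends the neutral element $e_\cyl=g_\circ^{d-2}+dt^2$ to the neutral element $e_D=g_\circ^{d-1}$. The latter is immediate from \pref{Equation}{equ:1}: gluing $g_\tor^\op$ and $g_\tor$ onto the two ends of $g_\circ^{d-2}+dt^2$ produces $g_\tor^\op\cup(g_\circ^{d-2}+dt^2)\cup g_\tor$, which is exactly the double torpedo metric on $S^{d-1}$, isotopic to the round metric $g_\circ^{d-1}=e_D$ (cf. \pref{Example}{ex:sphere-h-space}). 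For the multiplicativity relation I would chase the diagram: going down the left column via $\cl_{g_\tor}\times\cl_{g_\tor}$ into $\calR^+(S^{d-1}\amalg S^{d-1},\varphi)$, then across by $\overline\calS_\varphi$, agrees up to homotopy with going right by $\mu_\cyl$ and then up by $\cl_{g_\tor}$ — that is the lower square, which commutes by \pref{Equation}{equ:1}. On the other hand the composite $\calR^+(S^{d-1})\times\calR^+(S^{d-1})=\calR^+(S^{d-1}\amalg S^{d-1})\hookleftarrow\calR^+(S^{d-1}\amalg S^{d-1},\varphi)\xrightarrow{\overline\calS_\varphi}\calR^+(S^{d-1})$ is, by (ii) and the definition of $\calS_\varphi$, homotopic to $\mu_W=\mu_D$. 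Composing the square with the top triangle then yields $\cl_{g_\tor}\circ\mu_\cyl=\mu_D\circ(\cl_{g_\tor}\times\cl_{g_\tor})$, as required.

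The one point requiring genuine care — and which I expect to be the main obstacle — is the identification in (ii), i.e. that the ad hoc map $\calS_\varphi$ defined via \pref{Equation}{equ:1} really is a representative of the homotopy class $\calS(X_W)=\mu_D$. Here $X_W=D^\op\amalg D^\op\amalg D\colon S^{d-1}\amalg S^{d-1}\leadsto S^{d-1}$, and one must unwind the definition of the surgery map on this particular cobordism: the outgoing $S^{d-1}$ is built from the two incoming spheres by removing a hemisphere-disk from each and connecting them by a tube, which is exactly the trace of the surgery along $\varphi\colon S^0\times D^{d-1}\embeds S^{d-1}\amalg S^{d-1}$, capped off by the third disk $D$ contributing the reference metric $e_D=g_\circ$ (explaining why the middle of \pref{Equation}{equ:1} is the flat cylinder $g_\circ^{d-2}+dt^2$ rather than something more complicated). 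I would cite \cite[Definition 2.23 (3)]{actionofmcg} (and its cobordism invariance) to justify that $\calS$ evaluated on $X_W$ is computed by precisely this cut-tube-and-standard-middle recipe, and note that replacing the metric on the complement of $\im\varphi$ by its standard form via the Chernysh/Ebert–Frenck parametrized surgery theorem does not change the homotopy class. Once (ii) is granted, the rest is the formal diagram chase above and the isotopy of the double torpedo to the round metric; no further analysis is needed.
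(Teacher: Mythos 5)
Your proposal is correct and follows essentially the same route as the paper: read off the equivalence from the commutative diagram, using (i) the stability of $g_\tor$ for the homotopy equivalence, (ii) the identification $\mu_D=\calS(X_W)=\calS_\varphi$ from the definition of the surgery map, and (iii) commutativity of the lower square via \pref{Equation}{equ:1}. Your explicit check that $\cl_{g_\tor}$ sends $e_\cyl$ to the double torpedo metric (isotopic to $g_\circ^{d-1}=e_D$) is a small completeness addition that the paper leaves implicit, but the argument is otherwise identical.
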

\noindent \pref{Corollary}{cor:compare} now follows from \pref{Corollary}{cor:walsh} and \pref{Theorem}{thm:compare}.

\let\oldaddcontentsline\addcontentsline 
\renewcommand{\addcontentsline}[3]{}
\bibliographystyle{halpha-abbrv}
\bibliography{Bibliography}
\let\addcontentsline \oldaddcontentsline

\end{document}